\newcommand{\id}{\mathrm{id}}
\newcommand{\Ric}{\mathrm{Ric}}
\newcommand{\Riem}{\mathrm{R}}
\newcommand{\e}{\epsilon}
\renewcommand{\i}{\iota}
\newcommand{\md}{\mathrm{d}}
\newcommand{\on}{\nabla^\s}
\newcommand{\oR}{{\Riem^\s}}
\newcommand{\oRic}{\mathrm{Ric^\s}}
\newcommand{\n}{\nabla}
\renewcommand{\L}{\mathcal{L}}
\renewcommand{\o}{\omega}
\newcommand{\s}{\sigma}
\newcommand{\tr}{\mathrm{tr}}
\renewcommand{\a}{\alpha}
\newcommand{\de}{\delta}
\renewcommand{\b}{\beta}
\renewcommand{\d}{\partial}
\renewcommand{\div}{\mathrm{div}}
\renewcommand{\H}{\mathcal H}
\newcommand{\U}{\mathcal U}
\newcommand{\V}{\mathcal V}
\theoremstyle{plain}
\newtheorem{thm}{Theorem}[section]
\newtheorem{prop}[thm]{Proposition}
\newtheorem{lemma}[thm]{Lemma}
\newtheorem{cor}[thm]{Corollary}
\newtheorem{conj}[thm]{Conjecture}
\theoremstyle{definition}
\newtheorem{definition}[thm]{Definition}
\newtheorem{notation}[thm]{Notation}
\newtheorem{remark}[thm]{Remark}
\newtheorem{example}[thm]{Example}
\newtheorem{assumption}[thm]{Assumption}
\newtheorem{ind_assumption}[thm]{Induction Assumption}
\newcommand{\R}[0]{\mathbb{R}}
\newcommand{\N}[0]{\mathbb{N}}
\title[The asymptotic expansion at the event horizon]{The asymptotic expansion of the spacetime metric at the event horizon}
\author{Klaus Kröncke}
\address{Department of Mathematics, KTH Royal Institute of Technology,
Lindstedtsvägen 25,
11428 Stockholm, Sweden}
\email{kroncke@kth.se}
\author{Oliver Petersen}
\address{Department of Mathematics, Stockholm University,
Albanovägen 28,
10691 Stockholm, Sweden}
\email{oliver.petersen@math.su.se}
\keywords{Non-degenerate Killing horizon, bifurcate horizon, compact Cauchy horizon, black hole uniqueness.}
\subjclass[2010]{Primary 53C50; Secondary 35L80; 83C05}
\begin{document}

\begin{abstract}
Hawking's local rigidity theorem, proven in the smooth setting by Alexakis-Ionescu-Klainerman, says that the event horizon of any stationary non-extremal black hole is a non-degenerate Killing horizon.
In this paper, we prove that the full asymptotic expansion of any smooth vacuum metric at a non-degenerate Killing horizon is determined by the geometry of the horizon.
This gives a new perspective on the black hole uniqueness conjecture.
In spacetime dimension $4$, we also prove an existence theorem: Given any non-degenerate horizon geometry, Einstein's vacuum equations can be solved to infinite order at the horizon in a unique way (up to isometry).
The latter is a gauge invariant version of Moncrief's classical existence result, without any restriction on the topology of the horizon.
In the real analytic setting, the asymptotic expansion is shown to converge and we get well-posedness of this characteristic Cauchy problem.
\end{abstract}

\maketitle
\tableofcontents
\begin{sloppypar}

\section{Introduction}

Moncrief showed in two remarkable papers, \cites{M1982,M1984}, that the asymptotic expansion of any $4$-dimensional vacuum spacetime metric at a non-degenerate Killing horizon can be written in terms of six  essentially freely specifiable \emph{coordinate dependent} functions on the horizon.
If the functions are real analytic, then Moncrief shows that the asymptotic expansion converges and he obtains a real analytic vacuum metric in a neighborhood of a non-degenerate Killing horizon.
Even though his result is coordinate dependent, this is in stark contrast with spacelike hypersurfaces in vacuum spacetimes and the classical Cauchy problem in general relativity, where it is not in general possible to compute the full asymptotic expansion in terms of freely specifiable functions.
Indeed, at spacelike hypersurfaces, the relativistic constraint equations have first to be solved, which is a highly non-trivial task even on a local coordinate patch.

However, in Moncrief's setup, it does not seem possible to determine whether two obtained vacuum spacetimes are isometric to infinite order at the horizon (or isometric in a neighborhood of the horizon, in case the metric is real analytic).
In other words, the question is: When do two sets of Moncrief data at a non-degenerate Killing horizon give geometrically equivalent vacuum spacetimes near the horizon?
The first main novelty in this paper is a complete answer to this question.
The second main novelty is that we can handle horizons of any topology.
Indeed, using our geometric uniqueness argument, we are able to glue together local solutions on coordinate patches to a global solution, independent of the topology.

Now, the key idea of this paper is to formulate \emph{geometric} data on non-degenerate Killing horizons, in terms of which one can compute the full asymptotic expansion of the spacetime metric in a geometrically canonical way.
In particular, our construction is \emph{gauge independent}, as opposed to Moncrief's approach.
We introduce the following notion:
\begin{definition} \label{def: non-deg Killing data}
\emph{Non-degenerate Killing horizon data} are a smooth Riemannian manifold $(\H, \s)$ equipped with a Killing vector field $V$ of (non-zero) constant length.
\end{definition}

As we will see in Subsection \ref{subsec: induced data}, such initial data are naturally induced on any non-degenerate Killing horizon in any vacuum spacetime.
Our first main result is the following:
\begin{thm} \label{thm: main uniqueness informal}
The asymptotic expansion of a vacuum spacetime metric at a non-degenerate Killing horizon $\H$ is completely determined by a Riemannian metric $\sigma$ and a Killing vector field $V$ of constant length (w.r.t.\ $\sigma$) on the horizon.
\end{thm}

\begin{cor} \label{cor: main uniqueness analytic informal}
If in addition the spacetime metric is real analytic, then it is completely determined by $\s$ and $V$ in an open neighborhood of the horizon.
\end{cor}

\noindent
See Theorem \ref{thm: main uniqueness formal} for the precise formulation. 
Combining this with the following celebrated result of Alexakis-Ionescu-Klainerman (generalizing the work of Hawking in \cites{H1972, HE1973} in the real analytic setting), our result applies to the event horizons of stationary black holes:
\begin{thm}[\cite{AIK2010}] \label{thm: AIK}
The event horizon of any stationary non-extremal black hole (with spherical cross-section and bifurcate horizon) is a non-degenerate Killing horizon.
\end{thm}

\noindent
The assumptions of spherical cross-section and bifurcate horizons were later removed by the second author of this paper in \cite{P2019}*{Thm.\ 1.23} and replaced by compact cross-section and non-degeneracy of the horizon.

We prove the converse to Theorem \ref{thm: main uniqueness informal} in spacetime dimension $4$:
\begin{thm}
 \label{thm: main existence informal}
Given any $3$-dimensional Riemannian manifold $(\H, \s)$, equipped with a Killing vector field $V$ of constant length, there is a unique (up to isometry to infinite order) series expansion solution of Einstein's vacuum equation, with induced data $(\H, \s, V)$ on a non-degenerate Killing horizon.
\end{thm}
\noindent
Note that we have no restriction on the topology of the horizon. 
In Moncrief's approach, one needs a global coordinate system on the horizon.
In our approach, we get rid of this condition by \emph{gluing} Moncrief solutions by applying our uniqueness result, Theorem \ref{thm: main uniqueness informal}.
If the data is real analytic, the asymptotic expansion converges:
\begin{cor} \label{cor: main existence analytic informal}
The real analytic 4-dimensional vacuum spacetimes with a non-degenerate Killing horizon are in one-to-one-correspondence (up to isometry) with the real analytic Riemannian 3-manifolds admitting a Killing vector field of constant length.
\end{cor}

\noindent
See Theorem \ref{thm: main existence formal} for the precise formulation of these statements. 
Corollary \ref{cor: main existence analytic informal} provides the following set of interesting examples, generalizing the Schwarzschild spacetime:

\begin{example} \label{ex: any shape}
Let
\[
	\H := \R \times K,
\]
where $K$ is \emph{any} real analytic Riemannian surface. 
We equip $\H$ with the Riemannian product metric $\s$, implying that $\s$ admits a Killing vector field of constant length.
If $K = S^2$ with the round metric, our result produces the Schwarzschild spacetime (with a certain mass).
If $K$ is any other surface, then our result produces another real analytic vacuum spacetime with a horizon of that shape.
Given two such surfaces $K_1$ and $K_2$, our result implies that the resulting vacuum spacetimes are isometric near the horizon if and only if $K_1$ and $K_2$ are isometric.
In conclusion, our result provides the existence of a horizon of \emph{any real analytic shape}.
\end{example}

\begin{remark}
Example \ref{ex: any shape} shows that our constraint equation is very easy to solve.
Indeed, we just need to find a Riemannian metric with a Killing vector field of constant length\footnote{In fact, it suffices to find a \emph{nowhere vanishing} Killing vector field $V$, for it is a Killing vector field of constant length with respect to the metric
\[
	\hat \s := \frac{\s}{\s(V, V)}.
\]}, which is much simpler than solving the constraint equations for the classical Cauchy problem in general relativity \cite{F-B1952}.
\end{remark}

\begin{remark}
There is an important difference in our geometric formulation of the data on horizons and Moncrief's formulation.
As it turns out, one of Moncrief's six functions, called $\mathring \phi$ in this paper, can always be set to zero.
With all other choices of $\mathring \phi$, one will just produce isometric (up to infinite order) copies of vacuum spacetimes to a choice when $\mathring \phi = 0$.
The correct amount of essentially freely specifiable functions in spacetime dimension $4$ is therefore \emph{five}, as opposed to six functions in Moncrief's approach.
We refer to Remark~\ref{rmk: degrees of freedom} below for the computation of these degrees of freedom.
\end{remark}

One of the fundamental conjectures in mathematical general relativity is the black hole uniqueness conjecture:
\begin{conj}[The black hole uniqueness conjecture] \label{conj: black hole uniqueness}
The only possible domain of outer communication in a $4$-dimensional stationary asymptotically flat vacuum black hole solution to Einstein's vacuum equation is the domain of outer communications in a Kerr black hole.
\end{conj}

\noindent
For the precise formulation of this, we refer to \cite{CC2008}*{Conj.\ 1.2}. 
Let us now denote the induced non-degenerate Killing horizon data (c.f.\ Definition~\ref{def: induced data}) on the event horizon in a subextremal Kerr spacetime by $(\s_{\mathrm{Kerr}}, V_{\mathrm{Kerr}})$ on $\R \times S^2$.
By applying Theorem \ref{thm: main uniqueness informal}, we can replace the technical assumption on the horizon in \cite{IK2009}*{Main~Theorem} (denoted \textbf{T} there) and replace it by the assumption that the horizon geometry coincides with $(\s_{\mathrm{Kerr}}, V_{\mathrm{Kerr}})$:
\begin{thm}[Combining Theorem \ref{thm: main uniqueness informal} with \cite{IK2009}*{Main Theorem}]
Let $M$ be a spacetime satisfying the asymptotic flatness assumption \textbf{AF}  and the smooth bifurcate sphere assumption \textbf{SBS}, respectively, as in \cite{IK2009}*{p. 40-41}. 
If the induced data on the event horizon is $(\s_{\mathrm{Kerr}}, V_{\mathrm{Kerr}})$, then the outer domain of communication in $M$ is locally isometric to a subextremal Kerr spacetime.
\end{thm}

\noindent
It is reasonable to expect that `locally isometric' could be strengthened to `isometric' in this statement.
In that case, in the light of Theorem~\ref{thm: main existence informal}, only $(\s_{\mathrm{Kerr}}, V_{\mathrm{Kerr}})$ would produce the domain of outer communications in the subextremal Kerr spacetime (of a given mass and angular momentum).
By Theorem~\ref{thm: main existence informal}, the non-degenerate case of Conjecture \ref{conj: black hole uniqueness} can therefore be reformulated as follows:
\begin{conj}[The black hole uniqueness conjecture reformulated]
The only data $(\s, V)$ on the event horizon of a $4$-dimensional non-degenerate stationary vacuum black hole solution to Einstein's equation, which gives an asymptotically flat domain of outer communications, is the Kerr black hole data $\left( \s_{\mathrm{Kerr}}, V_{\mathrm{Kerr}} \right)$.
\end{conj}
\noindent
In light of Theorem \ref{thm: main uniqueness informal} and this formulation of the black hole uniqueness conjecture, one is lead to ask whether the asymptotic flatness of the spacetime can be read off from the asymptotic expansion of the metric at the horizon.
This is a topic for future research.

\subsection{Induced non-degenerate Killing horizon data} \label{subsec: induced data}
Let us now explain how non-degenerate Killing horizon data, in the sense of Definition \ref{def: non-deg Killing data}, are naturally induced on any non-degenerate Killing horizon in any vacuum spacetime.
For this, let $(M, g)$ be a smooth spacetime, i.e.\ a time-oriented connected Lorentzian manifold of dimension $n+1 \geq 2$, with an embedded smooth lightlike hypersurface 
\[
	\iota: \H \hookrightarrow M.
\]
We will often suppress the embedding and simply write $\H \subset M$.

\begin{definition}\label{def: Hawking vector field}
A smooth Killing vector field $W$ on $M$, such that
\[
	W|_\H
\]
is nowhere vanishing, lightlike and tangent to $\H$, is called a \emph{horizon Killing vector field} on $M$ with respect to $\H$.
A smooth lightlike hypersurface $\H$, with respect to which there is a horizon Killing vector field on $M$, is called a \emph{Killing horizon} in $M$.
\end{definition}
\noindent
Assuming the curvature condition
\begin{equation} \label{eq: curvature condition}
	\Ric(W|_\H, X) = 0
\end{equation}
for all $X \in T\H$, the existence of a horizon Killing vector field implies that the surface gravity is constant, i.e.\ that there is a constant $\kappa \in \R$, called \emph{surface gravity}, such that
\begin{equation} \label{eq: surface gravity}
	\n_WW|_\H 
		= \kappa W|_\H.
\end{equation}
For a proof of this classical fact, see e.g.~\cite{PV2021}*{Rem.\ 1.9, Lem.\ B.1}.

\begin{definition}
Assume that \eqref{eq: curvature condition} is satisfied.
We say that $\H$ is a \emph{non-degenerate Killing horizon}, with respect to $W$, if $\kappa \neq 0$.
\end{definition}

\noindent
By replacing $W$ with $-W$ if necessary, we may always assume that $\kappa > 0$:

\begin{assumption}
We assume throughout this paper that $\H \subset M$ is a smooth non-degenerate Killing horizon with $\kappa > 0$.
\end{assumption}
\noindent
Let us now explain how $\s$ and $V$ are constructed from $g$ and $W$.
For this, we first need to recall some properties of Killing horizons.
Note that
\begin{align*}
	g(\n_XW, Y)|_\H 
		&= \frac12 \L_Wg(X, Y)|_\H - \frac12g(W, [X, Y])|_\H = 0
\end{align*}
for all $X, Y \in T\H$, which implies that $\n_XW|_\H$ is normal to the lightlike hypersurface $\H$ and must hence be a multiple of the lightlike direction $W|_\H$ at each point.
In other words, there is a unique smooth one-form $\o$ on $\H$, such that
\begin{equation} \label{eq: def omega}
	\n_XW|_\H = \omega(X) W|_\H.
\end{equation}
It immediately follows that
\begin{equation} \label{eq: omega invariance}
	\L_{W|_\H} \o = 0.
\end{equation}
Now, define the $(0,2)$-tensor $\s$ at any $p \in \H$ as
\begin{equation} \label{eq: sigma}
	\sigma(X, Y) := g(X, Y) + \o(X)\o(Y)
\end{equation}
for all $X, Y \in T_p\H$.
Since 
\[
	\o(W|_\H) = \kappa > 0
\]
by assumption, we note that $\s$ is positive definite, i.e.\ $\sigma$ is a \emph{Riemannian metric} on $\H$.
Let us also introduce the notation
\begin{equation} \label{eq: Riem Killling VF}
	V := W|_\H.
\end{equation}

\begin{definition} \label{def: induced data}
We call $(\H, \sigma, V)$ as defined in \eqref{eq: sigma} and \eqref{eq: Riem Killling VF} the \emph{induced non-degenerate Killing horizon data}.
\end{definition}

\noindent
We have thus shown that each vacuum spacetime with a smooth non-degenerate Killing horizon gives induced data $(\H, \sigma, V)$ in a geometric way.
Note the following two important properties of $\s$ and $V$:
\begin{itemize}
\item Firstly, we have
\[
	\L_V \s 
		= \L_W g|_\H + \L_V\o \otimes \o + \o \otimes \L_V \o = 0,
\]
i.e.\ $V$ is a Killing vector field with respect to $\s$.
\item Secondly, note that
\[
	\s(V, V) = \kappa^2
\]
is constant, i.e.\ the length of $V$ with respect to $\s$ is constant.
\end{itemize}

\subsection{Main result}

The main novelty in this paper is that two smooth vacuum spacetimes with non-degenerate Killing horizons are isometric to infinite order at the horizon, in a way which respects the (Lorentzian) Killing vector fields, precisely when the induced data is the same:

\begin{thm}[Uniqueness of the asymptotic expansion] \label{thm: main uniqueness formal}
Let $(M, g)$ and $(\hat M, \hat g)$ be smooth spacetimes.
Assume that
\begin{center}
\begin{tikzcd}
\H \arrow[hookrightarrow]{dr}{\hat \iota} \arrow[hookrightarrow]{r}{\iota} & M \\
 & \hat M
\end{tikzcd}
\end{center}
are two embeddings of $\H$ as non-degenerate Killing horizons with horizon Killing vector fields $W$ and $\hat W$, respectively, such that $(\H, \s, V)$ is the induced data in both cases.
If furthermore
\begin{align*}
	\n^k \left( \Ric_g \right)|_{\iota(\H)}
		&= 0, \\
	\hat \n^k \left( \Ric_{\hat g} \right)|_{\hat \iota(\hat \H)}
		&= 0
\end{align*}
for all $k \in \N_0$, then there are open neighborhoods $\i(\H) \subset \U \subset M$ and $\hat \i (\hat \H) \subset \hat \U \subset \hat M$ and a diffeomorphism
\[
	\Phi: (\U, g|_\U) \to (\hat \U, g|_{\hat \U})
\]
such that
\begin{align*}
	\Phi \circ \iota 
		&= \hat \iota, \\
	\Phi^* \hat W 
		&= W
\end{align*}
and
\begin{align*}
	\n^k \left( \Phi^*\hat g - g\right)|_{\i(\H)}
		&= 0
\end{align*}
for all $k \in \N_0$.
If in addition $g$ and $\hat g$ are real analytic, then $\U$ can be chosen such that
\begin{align*}
	\Phi^*\hat g|_\U
		&= g|_\U.
\end{align*}
\end{thm}
\noindent
For real analytic solutions, this in particular means that the spacetime metrics are isometric in neighborhoods of the horizons.

Our second main result is the converse statement for $3$-dimensional data:

\begin{thm}[Existence of the asymptotic expansion] \label{thm: main existence formal}
Assume that $(\H, \s)$ is a smooth $3$-dimensional Riemannian manifold, equipped with a Killing vector field $V$ of constant length, i.e.\
\[
	\L_V \sigma = 0
\]
and 
\[
	\sigma(V, V)
\]
is constant. 
Then there is a smooth $4$-dimensional spacetime $(M, g)$ and an embedding
\begin{center}
\begin{tikzcd}
	\H \arrow[hookrightarrow]{r}{\iota} & M
\end{tikzcd}
\end{center}
such that $\H$ is a non-degenerate Killing horizon with respect to a horizon Killing vector field $W$ on $M$ and
\begin{itemize}
	\item $\n^k \Ric_g|_{\i(\H)} = 0$ for all $k \in \N_0$,
	\item $(\H, \sigma, V)$ is the induced data as in Definition \ref{def: induced data}.
\end{itemize}
If in addition $\s$ is real analytic, then $\iota$ is real analytic and
\[
	\Ric_g = 0
\]
in an open neighborhood of $\iota(\H)$.
\end{thm}

\noindent
In other words, given any data, there is a spacetime metric which is vacuum to infinite order at the horizon.
For real analytic data, we get a solution to Einstein's vacuum equation in an open neighborhood of the horizon.

\begin{remark}
It is conceivable that Theorem~\ref{thm: main existence formal} extends verbatim to higher dimensions. 
Indeed, all novelties of this paper extend to higher dimensions. 
We only restrict to $3 + 1$ dimensions in Theorem~\ref{thm: main existence formal} because Moncrief's existence result in \cite{M1982} is proven in $3 + 1$ dimensions, c.f.~Theorem~\ref{thm: Moncrief} below.
\end{remark}

\subsubsection{Motivating the data} \label{sec: sigma g omega}
Let us give a brief motivation why the data $(\s, V)$ are correct for this characteristic Cauchy problem, by showing how the spacetime metric at the horizon and the one-form $\o$ are constructed from $(\s, V)$.
Given a data set $(\H, \s, V)$, note that the quadratic form
\[
	g(X, Y) := \s(X, Y) - \frac{\s(X, V) \s(Y, V)}{\s(V, V)}
\]
for any $X, Y \in T_p\H$ is a \emph{lightlike} metric at $\H$, i.e.\ $g(X, X) \geq 0$ and
\[
	g(X, X) = 0 \Leftrightarrow X \text{ is parallel to } V.
\]
Moreover, defining 
\[
	\o(X) := \frac{\s(X, V)}{\sqrt{\s(V, V)}},
\]
we have
\[
	\s(X, Y) = g(X, Y) + \o(X) \o(Y).
\]
The quadratic form $g$ will be the induced lightlike metric at the horizon in the resulting spacetime, and $\o$ will be the one-form defined by \eqref{eq: def omega}.

\subsubsection{Degrees of freedom}

Let us now count the (local) degrees of freedom in $4$-dimensional vacuum spacetimes:

\begin{remark} \label{rmk: degrees of freedom}
In a $4$-dimensional spacetime, the horizon will be $3$-dimensional with local coordinates $x_1, x_2, x_3$.
Without loss of generality, let us choose $V := \d_{x_1}$ and write 
\[
	\s 
		= \sum_{i,j = 1} \s_{ij}\md x_i \otimes \md x_j.
\]
The fact that $\s_{11} = \s(V, V)$ is constant is equivalent to choosing $\s_{11}$ to be a non-zero constant.
Since the scaling of $V$ does not matter, we may without loss of generality choose $\s_{11} = 1$.
The fact that
\[
	\L_V \s = 0
\]
is just to say that $\s_{ij}$ are all independent of the coordinate $x_1$.
Moreover, of course $\s_{ij} = \s_{ji}$.
This reduces the degrees of freedom precisely to choosing the \emph{five} functions $\s_{12}, \s_{13},\s_{23}, \s_{22}, \s_{33}$, dependent only on $x_2, x_3$, in such a way that $\s$ is a Riemannian metric.
As explained above, this is one function less than in Moncrief's approach.
\end{remark}

\subsubsection{Relation to earlier results}

Our asymptotic expansion of the spacetime metric at the event horizon is completely analogous to the celebrated Fefferman-Graham expansion of (asymptotically) Poincar\'e-Einstein metrics at conformal infinity \cites{FG1984,FG2007}.
The method of Fefferman and Graham is their \emph{ambient construction}, which corresponds to expanding a Lorentzian vacuum spacetime metric around a generalization of the light cone in the Minkowski spacetime.
Our result is very similar in spirit, with the light cone replaced by the event horizon of a stationary black hole.
However, their result is mathematically unrelated and cannot be applied in the event horizon setting.
Parallel to our work, there has also been interesting recent developments by Holzegel-Shao \cite{HS2022}, which is based on a Fefferman-Graham type expansion at the conformal boundary of asymptotically Anti-de Sitter spacetimes.

It is a classical topic in general relativity to show that horizons in vacuum spacetimes are Killing horizons, under weak assumptions.
Our results here apply to all such results, if the Killing horizon is non-degenerate.
As mentioned above, this was initiated by Hawking as the main novelty in his black hole uniqueness theorem \cites{H1972, HE1973}.
The analyticity assumptions in Hawking's theorem was dropped for stationary black holes by Alexakis-Ionescu-Klainerman in \cites{AIK2010}.
See also \cites{IK2009, IK2013} and the related work in real analytic spacetimes by Chru\'{s}ciel-Costa \cite{CC2008}, Moncrief-Isenberg \cite{MI2008} and Hollands-Ishibashi-Wald \cite{HIW2007}.

As a parallel to this, it is known that any non-degenerate compact Cauchy horizon in a smooth vacuum spacetime is a non-degenerate Killing horizon, by combining the result of the second author in \cite{P2019} with the result of Bustamente-Reiris \cite{BR2021} (see also \cite{GM2021} for a streamlined proof of the latter).
These are in turn using the results in \cites{P2018, PR2018, L2015, M2015} and the pioneering work in the analytic setting by Moncrief-Isenberg \cites{MI1983, MI2018}.

Let us finally mention that Geroch and Hartle in \cite{GH1982} find all exact \emph{static} and \emph{axisymmetric} $4$-dimensional black holes distorted by a external matter distribution. 
Their stronger assumptions allow them to discuss the global structure, in particular the asymptotics to infinity.

\vspace{3mm}
\noindent\textbf{Acknowledgements.}
We would like to thank Vincent Moncrief and Piotr Chru\'sciel for very valuable conversations.
The first author is grateful for the support of the Deutsche Forschungsgemeinschaft (DFG, German Research Foundation) through the priority programme 2026 \textit{Geometry at Infinity}. 
The second author is grateful for the support of the DFG – GZ: LI 3638/1-1, AOBJ: 660735 and the Stanford Mathematics Research Center.

\section{Uniqueness of the expansion} \label{sec: as exp}

The key in our arguments is to show that the full asymptotic expansion of the spacetime metric $g$ at the horizon is given \emph{geometrically} by the data $(\H, \s, V)$, assuming the Ricci curvature vanishes to infinite order at the horizon:

\begin{assumption} \label{ass: main}
Assume that $M$ is a smooth spacetime of dimension $n+1 \geq 2$, with a smooth Killing horizon $\H \subset M$, with horizon Killing vector field $W$, such that
\begin{equation} \label{eq: Ricci vanishing}
	\n^k\Ric|_\H = 0
\end{equation}
for all $k \in \N_0$.
\end{assumption}
We assume throughout this section that Assumption \ref{ass: main} is satisfied.

\subsection{A geometric gauge} \label{subsec: null time function}

Let $(\H, \s, V)$ denote the induced data (in the sense of Definition \ref{def: induced data}).
Let
\[
	V^\perp \subset T\H
\]
denote the smooth vector bundle of tangent vectors which are orthogonal to $V$, with respect to $\s$.
For notational convenience, we do not write out the embedding $\iota$ in this section and instead simply write
\[
	\H \subset M, \quad V = W|_\H,
\]
etc.

\begin{definition}
The unique lightlike smooth vector field $L$ along $\H$ which satisfies
\[
	g(L, V) = 1, \quad g(L, X) = 0,
\]
for all $X \in V^\perp$ is called the \emph{canonical transversal vector field} of $\H$.
\end{definition}

Note that indeed $L$ is not tangent to $\H$, since 
\[
	0 \neq g(L, V) = g(L, W|_\H).
\]
We think of $L$ as a natural replacement for the unit normal along a spacelike or timelike hypersurface.

\begin{prop}\label{prop: the d_t vector field}
There is a unique nowhere vanishing lightlike smooth (real analytic, if the metric is real analytic) vector field $\d_t$ on an open neighborhood $\U \supset \H$ satisfying
\begin{itemize}
	\item $\n_{\d_t}\d_t = 0$,
	\item $g(\d_t|_\H, V) = 1$,
	\item $g(\d_t|_\H, X) = 0$ for all $X \in V^\perp$, 
	\item all integral curves of $\d_t$ intersect $\H$ precisely once.
\end{itemize}
Moreover, it follows that
\begin{equation} \label{eq: dt and W commuting}
	[W, \d_t] 
		= 0.
\end{equation}
\end{prop}
\begin{proof}
We construct $\d_t$ as the unique solution to
\begin{align*}
	\n_{\d_t}\d_t
		&= 0, \\
	\d_t|_\H
		&= L,
\end{align*}
on a neighborhood $\U \supset \H$.
It follows that
\[
	\d_tg(\d_t, \d_t) = 0,
\]
which together with
\[
	g(\d_t, \d_t)|_\H = g(L, L)|_\H = 0
\]
implies that $\d_t$ is lightlike.
It remains to check that $[W, \d_t] = 0$.
For this, we first show that $[W, \d_t]|_\H = 0$.
Note first that
\begin{align*}
	g([W, \d_t]|_\H, L)
		&= g(\n_W \d_t|_\H, L) - g(\n_{\d_t} W|_\H, L) \\
		&= \frac12 W g(L, L) - \frac12 \L_Wg(L, L) \\
		&= 0.
\end{align*}
Note now that $g(\d_t|_\H, X) = g(L, X) = \frac{\o(X)}\kappa$, for all $X \in T\H$, since $g(L, X) = 0$ for all $X \perp_\s V$, i.e.~for all $X \in \ker(\o)$, and $g(L, V) = 1 = \frac{\o(V)}{\kappa}$.
Using \eqref{eq: omega invariance}, we compute
\begin{align*}
	g([W, \d_t]|_\H, X) 
		&= Wg(\d_t, X)|_\H - \L_Wg(\d_t, X)|_\H - g(\d_t, [W, X])|_\H \\
		&= Wg(\d_t, X)|_\H  - g(\d_t, [W, X])|_\H \\
		&= \frac{W \o(X)}\kappa - \frac{\o([W, X])}\kappa \\
		&= \frac{\L_W\o(X)}\kappa \\
		&= 0.
\end{align*}
It follows that $[W, \d_t]|_\H = 0$.
Now, since $\L_Wg = 0$, we note that
\[
	0 = \L_W\left(\n_{\d_t}\d_t \right) = \n_{[W, \d_t]}\d_t + \n_{\d_t}[W, \d_t], 
\]
which is a linear first order ODE.
It thus follows that $[W, \d_t] = 0$ as claimed.
Shrinking $\U$ if necessary, we can ensure that each integral curve of $\d_t$ intersects $\H$ precisely once. 
\end{proof}

\begin{prop}[The null time function] \label{prop: null time function}
Let Assumption \ref{ass: main} be satisfied.
There is a unique smooth (real analytic, if the metric is real analytic) function
\[
	t: \U \to \R,
\]
such that
\begin{align*}
	\md t(\d_t) 
		&= 1, \\
	t^{-1}(0)
		&= \H \cap \U.
\end{align*}
In particular, $\md t \neq 0$ everywhere on $\U$.
\end{prop}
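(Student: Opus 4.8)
The plan is to construct $t$ as the time parameter of the flow of the vector field $\d_t$ from Proposition \ref{prop: the d_t vector field}, measured from $\H$. Let $\Psi \colon \D \to \U$ denote the flow of $\d_t$, so that for $x \in \H$ the curve $\tau \mapsto \Psi(\tau, x)$ is the integral curve of $\d_t$ with $\Psi(0, x) = x$. Since $\d_t|_\H = L$ is transversal to $\H$ (as noted right after the definition of $L$, using $g(L, V) = 1 \neq 0$ while $V$ is the null normal of the lightlike hypersurface), and since the last bullet of Proposition \ref{prop: the d_t vector field} asserts that every integral curve of $\d_t$ meets $\H$ exactly once, the flow-out map $(\tau, x) \mapsto \Psi(\tau, x)$, defined on an open neighborhood of $\{0\} \times (\H \cap \U)$ in $\R \times \H$, has invertible differential along $\{0\} \times \H$. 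After shrinking $\U$, I may therefore assume this map is a diffeomorphism onto $\U$, and I define $t \colon \U \to \R$ to be the composition of its inverse with the projection $\R \times \H \to \R$.

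To verify the defining properties, observe first that $\Psi(0, x) = x \in \H$ for all $x$, so $t$ vanishes exactly on $\H \cap \U$, giving $t^{-1}(0) = \H \cap \U$. Next, under the diffeomorphism above the vector field $\d_t$ corresponds to the coordinate field $\d_\tau$ on $\R \times \H$, because flowing along $\d_t$ advances $\tau$ at unit speed; hence $\md t(\d_t) = 1$. In particular $\md t$ is nowhere zero on $\U$, since $\d_t$ is nowhere vanishing.

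For uniqueness, suppose $t$ and $\tilde t$ both satisfy the two conditions. Then $\d_t(t - \tilde t) = \md(t - \tilde t)(\d_t) = 0$, so $t - \tilde t$ is constant along each integral curve of $\d_t$. As each such curve meets $\H$, where both functions vanish, I conclude $t = \tilde t$ on $\U$.

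Finally, for the regularity statement: the flow $\Psi$ of a smooth vector field depends smoothly on $(\tau, x)$, and $\H$ is a smooth submanifold, so $t$ is smooth; when the metric is real analytic, $\d_t$ is real analytic by Proposition \ref{prop: the d_t vector field}, the flow of a real analytic vector field is real analytic by the analytic dependence of solutions of analytic ODEs on time and initial conditions, and the inverse map is real analytic by the real analytic inverse function theorem, so $t$ is real analytic. I expect the only point requiring genuine care to be the setup of the flow-out diffeomorphism and the shrinking of $\U$; everything else follows routinely from Proposition \ref{prop: the d_t vector field}. I note moreover that Assumption \ref{ass: main} plays no role in this particular argument.
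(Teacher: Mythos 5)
Your proof is correct and is essentially the paper's own argument: the paper defines $t$ as the ``eigentime of the integral curves of $\d_t$ starting at $\H$,'' which is exactly your flow parameter $\tau$ under the flow-out diffeomorphism. Your write-up merely fills in the details the paper leaves implicit (the transversality/inverse-function-theorem step, the uniqueness via $\d_t(t-\tilde t)=0$, and the analytic regularity), and your closing remark that Assumption \ref{ass: main} enters only through Proposition \ref{prop: the d_t vector field} is accurate.
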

\begin{proof}
Construct the function $t$ as the eigentime of the integral curves of $\d_t$, starting at $\H$.
Then $t$ is smooth and satisfies the assertion.
\end{proof}

\begin{definition}
We call $t: \U \to \R$ the \emph{null time function}.
\end{definition}

\subsection{The expansion}
Since the null time function $t$ is constructed geometrically, it is very natural to study the asymptotic expansion of the metric $g$ in terms of $t$, i.e.\ we formally write
\[
	g \sim \sum_{m = 0}^\infty \L_t^mg|_\H \frac{t^m}{m!},
\]
and iteratively compute $\L_t^m g|_\H := \underbrace{\L_{\d_t}\hdots \L_{\d_t}}_{m \text{ times}}g|_\H$ in terms of the data $(\s, V)$.
We will also use the notation
\[
	\n_t := \n_{\d_t}.
\]

\begin{remark} \label{rmk: g at H}
Proposition \ref{prop: the d_t vector field} implies that
\begin{align*}
	g(\d_t, V)|_\H 
		&= 1, \\
	g(X, Y)|_\H
		&= \s(X, Y), \\
	g(\d_t, \d_t)|_\H 
		&= g(\d_t, X)|_\H
		= g(V, V)|_\H
		= g(X, V)|_\H
		= 0, \label{eq: g at horizon}
\end{align*}
for all $X, Y \in V^\perp$.
Consequently, since $\d_t|_\H$ is determined by the data $(\s, V)$ and the embedding $\iota$ (which is suppressed here), we conclude that $g|_\H$ is determined by $(\s, V)$.
\end{remark}

We note that many components of $\L_t^mg$ automatically vanish:

\begin{lemma} \label{le: d t components}
By construction of $\d_t$, we have
\begin{align*}
	\L_t^mg(\d_t, \cdot) 
		&= 0,
\end{align*}
for all $m \in \N$.
\end{lemma}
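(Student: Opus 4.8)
The plan is to argue by induction on $m$, using only the two defining properties of $\d_t$ from Proposition \ref{prop: the d_t vector field}: that it is geodesic, $\n_{\d_t}\d_t = 0$, and lightlike, $g(\d_t, \d_t) = 0$ throughout $\U$, together with the trivial fact that $[\d_t, \d_t] = 0$. Writing $Y$ for an arbitrary (local) vector field, the claim to be proven is that $(\L_t^m g)(\d_t, Y) = 0$ for every $m \geq 1$.

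First I would settle the base case $m = 1$ directly from the connection, via the standard identity $(\L_{\d_t}g)(A, B) = g(\n_A \d_t, B) + g(A, \n_B \d_t)$. Setting $A = \d_t$ and $B = Y$, the first term is $g(\n_{\d_t}\d_t, Y) = 0$ because $\d_t$ is geodesic, while the second term is
\[
	g(\d_t, \n_Y \d_t) = \tfrac12 Y\big(g(\d_t, \d_t)\big) = 0
\]
because $g(\d_t, \d_t)$ vanishes identically on $\U$. This gives $(\L_{\d_t}g)(\d_t, Y) = 0$.

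For the inductive step I set $h := \L_t^{m-1}g$ and assume $h(\d_t, \,\cdot\,) = 0$, where now $m - 1 \geq 1$. Applying the Lie-derivative formula for a $(0,2)$-tensor,
\[
	(\L_{\d_t}h)(\d_t, Y) = \d_t\big(h(\d_t, Y)\big) - h\big([\d_t, \d_t], Y\big) - h\big(\d_t, [\d_t, Y]\big),
\]
the middle term drops because $[\d_t, \d_t] = 0$, the first term is the $\d_t$-derivative of the identically vanishing function $h(\d_t, Y)$, and the last term vanishes since $[\d_t, Y]$ is again a vector field and $h(\d_t, \,\cdot\,) = 0$ by hypothesis. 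Hence $(\L_t^m g)(\d_t, Y) = 0$, closing the induction.

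The main subtlety — and the reason the statement is restricted to $m \geq 1$ — is that the induction \emph{cannot} be started at $m = 0$: by Remark \ref{rmk: g at H} one has $g(\d_t, V)|_\H = 1 \neq 0$, so $g(\d_t, \,\cdot\,)$ does not vanish and the inductive mechanism above would fail for $h = g$. All the geometric content therefore sits in the base case $m = 1$, where the geodesic and lightlike properties of $\d_t$ are used; once a single Lie derivative has been taken, the propagation to higher $m$ is purely formal and uses nothing but $[\d_t, \d_t] = 0$. Equivalently, one may phrase the whole argument through the one-form $\alpha := g(\d_t, \,\cdot\,)$: Cartan's formula gives $\L_{\d_t}\alpha = \iota_{\d_t}\,\md\alpha$ since $\alpha(\d_t) = g(\d_t, \d_t) = 0$, one checks $\iota_{\d_t}\,\md\alpha = 0$ from the same two properties, and then $\iota_{\d_t}(\L_t^m g) = \L_t^{m-1}(\L_{\d_t}\alpha) = 0$ because $\iota_{\d_t}$ commutes with $\L_{\d_t}$.
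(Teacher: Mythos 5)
Your proof is correct and follows essentially the same route as the paper: induction on $m$, with the base case $m=1$ obtained from the geodesic and null properties of $\d_t$ via metric compatibility, and the inductive step being purely formal. The only cosmetic difference is that the paper tests against vector fields commuting with $\d_t$ (so that $\L_t^m g(\d_t, X) = \d_t \L_t^{m-1} g(\d_t, X)$), whereas you use the full Lie-derivative formula for an arbitrary $Y$; the two are equivalent.
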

\begin{proof}
For $m = 1$, we note that for any $X$ with $[\d_t, X] = 0$, we have
\[
	\L_tg(\d_t, X) = \d_tg(\d_t, X) = g(\n_t \d_t, X) + g(\d_t, \n_t X) = - \frac12 Xg(\d_t, \d_t) = 0.
\]
The general statement then follows by induction, by noting that
\[
	\L_t^mg(\d_t, X)
		= \d_t \L_t^{m-1}g(\d_t, X). \qedhere
\]
\end{proof}

For the remaining components of $\L_t^mg|_\H$, we will prove the following theorem:

\begin{thm} \label{thm: uniqueness in gauge}
Let Assumption \ref{ass: main} be satisfied.
Then there are unique (non-linear) differential operators $Q_m$ on $\H$ for $m \in \N$, such that
\[
	\L_t^mg(X, Y)|_\H = Q_m(\sigma, V)(X, Y),
\]
for all $m \in \N$ and all $X, Y \in T\H$.
Moreover, we have
\begin{equation} \label{eq: Q geometric}
	Q_m(\phi^*\s, \phi^*V) = \phi^*Q_m(\s,V),
\end{equation}
for all diffeomorphisms $\phi: \H \to \H$ and the $Q_m(\s, V)$'s are real analytic if $\s$ and $V$ are real analytic.
\end{thm}

In Subsection \ref{subsec: uniqueness}, we show that Theorem \ref{thm: uniqueness in gauge} implies Theorem \ref{thm: main uniqueness formal}.
The condition \eqref{eq: Q geometric} is very natural, it says that the differential operators $Q_m$ are \emph{diffeomorphism invariant}.
Theorem \ref{thm: uniqueness in gauge} would obviously not be true without assuming \eqref{eq: Ricci vanishing}, i.e.\ that the Ricci curvature vanishes to infinite order.
The rest of this section will be devoted to prove Theorem \ref{thm: uniqueness in gauge}, i.e.\ to study the remaining components of $\L_t^mg|_{\H}$.

\subsection{The first derivative}

We start by computing $Q_1(\s, V)$ in Theorem \ref{thm: uniqueness in gauge}.
It turns out that its components are given by simple explicit formulas.
Let $\on$ denote the Levi-Civita connection with respect to $\s$ and let $\oR$ and $\oRic$ denote the curvature tensor and the Ricci curvature of $\s$, respectively.

\begin{prop} \label{prop: first derivative}
In terms of the null time function $t$, we have
\begin{align*}
	\L_tg(V, V)|_\H
		&= - 2\kappa, \\
	\L_tg(V, X)|_\H
		&= 0, \\
	\L_tg(X, Y)|_\H
		&= \frac1\kappa \left(\oRic(X, Y) + \frac1{\kappa^2} \s(\on_X V, \on_Y V)\right)
\end{align*}
for all $X, Y \in V^\perp$, where $\kappa$ is the surface gravity with respect to $W$, defined in \eqref{eq: surface gravity}.
Moreover, we have
\begin{align}
	\n_tW|_\H 
		&= - \kappa \d_t|_\H. \label{eq: nabla t W}
\end{align}
\end{prop}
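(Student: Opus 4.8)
The plan is to express each tangential component of $\L_tg|_\H$ through the transversal $\d_t|_\H = L$ by the pointwise identity
\[
(\L_{\d_t}g)(A,B) = g(\n_A\d_t, B) + g(\n_B\d_t, A),
\]
so that only $\n_\bullet L$ enters, and then to bring in the Killing equation and the vacuum condition \eqref{eq: Ricci vanishing}. I work in the frame $\{L, V = W, e_i\}$ along $\H$ with $e_i\in V^\perp$, where $g(L,V) = 1$, $g(e_i,e_j) = \s_{ij}$ and all other inner products vanish (Remark \ref{rmk: g at H}). First I would establish \eqref{eq: nabla t W}: since $W$ is Killing, $\n W$ is $g$-skew, so $g(\n_LW,L) = 0$; moreover $g(\n_LW,V) = -g(\n_VW,L) = -\kappa$ from $\n_VW|_\H = \o(V)W = \kappa V$, and $g(\n_LW,e_j) = -g(\n_{e_j}W,L) = -\o(e_j) = 0$ because $V^\perp = \ker\o$. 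Hence $\n_LW|_\H = -\kappa L$, which is \eqref{eq: nabla t W} as $\d_t|_\H = L$. The same skew-symmetry and differentiation of $g(L,W)\equiv 1$ along $\H$ give $g(\n_XL,L) = 0$ and $g(\n_XL,W) = -\o(X)$ for all $X\in T\H$; in particular $\n_XL\in V^\perp$ for $X\in V^\perp$.

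The first two formulas are immediate. By Proposition \ref{prop: the d_t vector field}, $[\d_t,W] = 0$, so $\n_W\d_t = \n_{\d_t}W$ and thus $\n_W\d_t|_\H = -\kappa L$. The reduction identity then gives $(\L_tg)(V,V)|_\H = 2g(\n_W\d_t,W) = -2\kappa$, and for $X\in V^\perp$, $(\L_tg)(V,X)|_\H = g(\n_W\d_t,X) + g(\n_XL,W) = -\kappa g(L,X) - \o(X) = 0$.

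The $V^\perp$-block carries the content, and is the main obstacle; here curvature and the vacuum equation must enter. For $X\in V^\perp$, differentiating $\n_{\d_t}W|_\H = -\kappa L$ tangentially gives $\n_X\n_{\d_t}W|_\H = -\kappa\,\n_XL$. On the other hand the second-order identity for a Killing field, $\n_A\n_BW - \n_{\n_AB}W = \Riem(W,A)B$ (in the paper's curvature convention), applied with $A = X$, $B = \d_t$, has vanishing correction term $\n_{\n_X\d_t}W = \o(\n_XL)W = 0$ since $\n_XL\in V^\perp = \ker\o$, and hence reads $\n_X\n_{\d_t}W|_\H = \Riem(W,X)L$. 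Comparing the two, $\n_XL = -\tfrac1\kappa\Riem(W,X)L$, so that
\[
(\L_tg)(X,Y)|_\H = -\tfrac1\kappa\bigl(\Riem(W,X,L,Y) + \Riem(W,Y,L,X)\bigr), \qquad X,Y\in V^\perp,
\]
where $\Riem(A,B,C,D) := g(\Riem(A,B)C, D)$. Contracting $\Riem$ in the frame above, where $g^{LV} = g^{VL} = 1$ and $g^{ij} = \s^{ij}$, gives
\[
\Ric(X,Y) = \Riem(L,X,W,Y) + \Riem(W,X,L,Y) + \s^{ij}\Riem(e_i,X,e_j,Y) = 0
\]
by \eqref{eq: Ricci vanishing}; since $\Riem(L,X,W,Y) = \Riem(W,Y,L,X)$ by the pair symmetry, the two mixed terms are precisely the combination above, so $(\L_tg)(X,Y)|_\H = \tfrac1\kappa\,\s^{ij}\Riem(e_i,X,e_j,Y)$.

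It remains to identify this spatial contraction with $\oRic(X,Y) + \tfrac1{\kappa^2}\s(\on_XV,\on_YV)$, which I expect to be the most delicate step. The relevant structure is that $\H$ is totally geodesic: the $L$-component $g(W,\n_XY) = -g(\n_XW,Y) = -\o(X)g(W,Y) = 0$ of $\n_XY$ vanishes for $X,Y\in T\H$, so the induced connection $D := \n|_{T\H}$ has curvature equal to the restriction of $\Riem$; moreover $g(Z,Y) = \s(Z,Y)$ whenever $Z\in T\H$ and $Y\in V^\perp$, whence $\s^{ij}\Riem(e_i,X,e_j,Y) = \s^{ij}\s(R^D(e_i,X)e_j,Y)$. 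I would then compare $D$ with the Levi-Civita connection $\on$ of $\s$: the Koszul formula for $\s = g|_{T\H} + \o\otimes\o$, combined with $D_XV = \o(X)V$ and $\o = \tfrac1\kappa\s(V,\cdot)$, exhibits the contorsion $\on - D$ as $V$-valued and built from $\on V$. Contracting the curvature-difference formula for $R^{\on}$ versus $R^D$ then reorganizes the spatial contraction into the full intrinsic trace $\oRic(X,Y)$ together with the single quadratic term $\tfrac1{\kappa^2}\s(\on_XV,\on_YV)$. The difficulty is entirely in this bookkeeping with a degenerate induced metric and a $V$-valued contorsion: one must check that the first-derivative terms assemble into $\oRic$ — supplying precisely the missing $V$-direction in the trace — while the contorsion-squared terms collapse to the $\on V$-square and all $\o$-derivative terms cancel.
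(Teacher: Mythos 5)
Your proof of \eqref{eq: nabla t W} and of the $(V,V)$- and $(V,X)$-components is correct, and your reduction of the $V^\perp$-block via the second-derivative identity for Killing fields is a legitimate variant of the paper's argument (the paper instead expands $\L_W(\L_tg)=0$ and evaluates at $\H$; both routes lead to the same intermediate identity $\kappa\,\L_tg(X,Y)|_\H=\Riem(W,X,\d_t,Y)|_\H+\Riem(W,Y,\d_t,X)|_\H$). But the proposal has a genuine gap: the final identification
\[
	\s^{ij}\,\Riem(X,e_i,e_j,Y)|_\H=\oRic(X,Y)+\frac{1}{\kappa^2}\,\s(\on_XV,\on_YV),
	\qquad X,Y\in V^\perp,
\]
which you yourself call the most delicate step, is never proved --- you only name a strategy (totally geodesic $\H$, induced connection $D$, $V$-valued contorsion, ``bookkeeping''). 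This identification is exactly the paper's Lemma \ref{le: sigma ricci computation}, and it is the real content of the proposition: its proof requires the Koszul-formula relation $\s(\on_XY,Z)=g(\n_XY,Z)|_\H$ on $V^\perp$, the decomposition of $\on_XY$ into $\n_XY|_\H$ plus $V$-components, the quadratic terms $\frac{1}{2\kappa^2}\s([X,Y],V)^2$ that collapse to the $\on V$-square, and the separate identities $\on_VV=0$ and $\oR(X,V,V,X)=\s(\on_XV,\on_XV)$ that supply the missing $V$-direction when passing from the $\s$-trace over $V^\perp$ to the full $\oRic$. None of these cancellations is verified in your sketch, so the argument stops exactly where the real work begins.

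The gap is compounded by a sign problem in the part you do carry out. In the paper's conventions ($R(A,B)C=\n_A\n_BC-\n_B\n_AC-\n_{[A,B]}C$, $\Ric(X,Y)=\tr_g\Riem(\cdot,X,Y,\cdot)$, contraction over the first and fourth slots), the Killing identity reads $\n_A\n_BW-\n_{\n_AB}W=-\Riem(W,A)B$, opposite to the sign you state, and your Ricci contraction over the first and third slots equals $-\Ric$, not $\Ric$. The second slip is harmless as an equation (vacuum makes both sides zero), but the first propagates: you obtain $\L_tg(X,Y)|_\H=\frac{1}{\kappa}\s^{ij}\Riem(e_i,X,e_j,Y)$, whereas the correct reduction --- the one compatible with Lemma \ref{le: sigma ricci computation} --- is $\L_tg(X,Y)|_\H=\frac{1}{\kappa}\s^{ij}\Riem(X,e_i,e_j,Y)$; these differ by a sign. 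Your formulas are mutually consistent only if $\Riem$ throughout denotes the negative of the paper's curvature tensor. Since the deferred bookkeeping step is precisely where this sign would have to be caught, you cannot leave both the convention and the computation unresolved.
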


\begin{remark} \label{rmk: first derivative} 
We conclude that
\begin{align*}
	Q_1(\s, V)(V, V)
		&= - 2\kappa, \\
	Q_1(\s, V)(V, X)
		&= 0, \\
	Q_1(\s, V)(X, Y)
		&= \frac1\kappa \left(\oRic(X, Y) + \frac1{\kappa^2} \s(\on_X V, \on_Y V)\right),
\end{align*}
which is a diffeomorphism invariant differential operator in $(\s, V)$.
Since we use the convention that $\kappa > 0$, equation \eqref{eq: sigma} implies that
\[
	\kappa = \o(V) = \sqrt{\s(V, V)}.
\]
This proves the assertion in Theorem \ref{thm: uniqueness in gauge} for $m = 1$.
\end{remark}

In order to prove Proposition \ref{prop: first derivative}, we begin with the following lemma:

\begin{lemma} \label{le: sigma ricci computation}
We have
\[
	\sum_{i,j = 2}^n g^{ij} \Riem(X, e_i, e_j, Y)|_\H = \oRic(X, Y) + \frac1{\kappa^2} \s(\on_X V, \on_Y V),
\]
for any $X, Y \in V^\perp$, where $\{e_2, \hdots, e_n\}$ is a basis for $V^\perp \subset T\H$ and $g^{ij}$ denotes the inverse of $g_{ij} := g(e_i, e_j)$.
\end{lemma}
\begin{proof}
We need to compare $\n$ and $\on$.
Since 
\[
	\sigma(X, Y) = g(X, Y)|_\H,
\]
for all $X \in V^\perp$ and $Y \in T\H$, the Koszul formula implies for all $X, Y, Z \in V^\perp$ that
\begin{align*}
	2 \s(\on_X Y, Z)
		&= X \s(Y, Z) + Y \s(X, Z) - Z \s(X, Y) \\*
		&\qquad + \sigma([X, Y], Z) - \sigma([X, Z], Y) - \s([Y, Z], X) \\
		&= 2g(\n_XY, Z)|_\H.
\end{align*}
This implies for all $X, Y \in V^\perp$ that
\begin{align*}
	\on_XY 
		&= \sum_{i,j = 2}^n \s(\on_XY, e_i)\s^{ij}e_j + \frac1{\kappa^2}\s(\on_XY, V) V \\
		&= \sum_{i,j = 2}^n g(\n_XY, e_i)g^{ij}e_j|_\H + \frac1{\kappa^2} \s(\on_XY, V) V \\
		&= \n_XY|_\H - g(\n_XY, \d_t)|_\H V + \frac1{\kappa^2} \s(\on_XY, V) V,
\end{align*}
where we have used that $g(\n_X Y, V)|_\H = - g(Y, \n_X V)|_\H = - \frac12 \L_W g(X, Y)|_\H = 0$.
The Koszul formula further implies for $X, Y \in V^\perp$ that
\begin{align*}
	2 \s(\on_V X, Y)
		&= V \s(X, Y) + X \s(V, Y) - Y \s(V, X) \\
		&\qquad + \sigma([V, X], Y) - \sigma([V, Y], X) - \s([X, Y], V) \\
		&= 2g(\n_VX, Y)|_\H - \sigma([X, Y], V).
\end{align*}
We may now compute for all $X, Y \in V^\perp$ that
\begin{align*}
	 \Riem&(X, Y, Y, X)|_\H \\
	 	&= g(\n_X \n_YY - \n_Y \n_XY - \n_{[X, Y]}Y, X)|_\H \\
	 	&= g(\n_X( \on_YY + g(\n_YY, \d_t) V - \frac1{\kappa^2} \s(\on_YY, V) V), X)|_\H \\*
	 	&\qquad - g(\n_Y( \on_XY + g(\n_XY, \d_t) V - \frac1{\kappa^2}\s(\on_XY, V) V), X)|_\H \\*
		&\qquad - \sum_{i,j = 2}^n \s([X, Y], e_i)\s^{ij} g(\n_{e_j}Y, X)|_\H - \frac1{\kappa^2} \s([X, Y], V) g(\n_V Y, X)|_\H \\
		&= g(\n_X \on_YY, X)|_\H - g(\n_Y \on_XY, X)|_\H \\*
		&\qquad - \sum_{i,j = 2}^n \s([X, Y], e_i)\s^{ij} g(\on_{e_j}Y, X)|_\H \\*
		&\qquad - \frac1{\kappa^2}\s([X, Y], V) g(\n_V Y, X)|_\H \\ 
		&= \s(\on_X \on_YY, X) - \s(\on_Y \on_XY, X) - \s(\on_{[X, Y]}Y, X) \\*
		&\qquad + \frac1{\kappa^2} \s([X, Y], V) \s(\on_V Y, X) - \frac1{\kappa^2}\s([X, Y], V) g(\n_V Y, X)|_\H \\
		&= \oR(X, Y, Y, X) + \frac1{2\kappa^2} \s([X, Y], V)^2 \\
		&= \oR(X, Y, Y, X) + \frac1{2\kappa^2}\left( \sigma(\on_YV, X) -  \sigma(\on_XV, Y)\right)^2 \\
		&= \oR(X, Y, Y, X) + \frac2{\kappa^2} \sigma(\on_XV, Y)^2,
\end{align*}
where we have used that $\L_V\s = 0$.
It follows that
\begin{align*}
	 \sum_{i,j = 2}^n g^{ij} \Riem(X, e_i, e_j, X)|_\H
	 	&=  \sum_{i,j = 2}^n \s^{ij} \oR(X, e_i, e_j, X) + \frac2{\kappa^2} \s(\on_X V, \on_X V) \\
	 	&= \oRic(X, X) - \frac1{\kappa^2} \oR(X, V, V, X) + \frac2{\kappa^2} \s(\on_X V, \on_X V).
\end{align*}
In order to compute $\oR(X, V, V, X)$, we first note that for all $X \in T\H$, we have
\begin{align*}
	\s(\on_VV, X)
		&= \L_V \s(V, X) - \s(\on_X V, V) \\
		&= - \frac12 X \s(V, V) \\
		&= 0,
\end{align*}
from which we conclude that
\[
	\on_V V = 0.
\]
We may thus compute for all $X \in V^\perp$ that 
\begin{align*}
	\oR(X, V, V, X) 
		&= \s(\on_X \on_V V - \on_V \on_X V - \on_{[X, V]} V, X) \nonumber \\
		&= - V\s(\on_XV, X) + \s(\on_XV, \on_V X) \\
		&\qquad - \L_V\s([X, V], X) + \s(\on_X V, [X, V]) \\
		&= - \frac12 V \L_V \s(X, X) + \s(\on_X V, \on_X V) \\
		&= \s(\on_X V, \on_X V),
\end{align*}
which completes the proof.
\end{proof}

\begin{proof}[Proof of Proposition \ref{prop: first derivative}]
Using that $g(\partial_t,W)|_\H = 1$  and $\nabla_WW|_\H= \kappa W|_\H$, we get
\[
	\L_tg(W,W)|_\H 
		=2g(\nabla_W\partial_t,W)|_\H
		=-2g(\partial_t,\nabla_WW)|_\H
		=-2g(\partial_t,\kappa W)|_\H
		=-2\kappa.
\]
Let $X$ be a smooth vector field, such that $X|_\H \in C^\infty(V^\perp)$ and $[X, \d_t] = 0$.
We compute
\begin{align*}
	\L_tg(W,X)|_\H
		&=g(\nabla_W\partial_t,X)|_\H + g(W,\nabla_X\partial_t)|_\H \\
		&=-g(\partial_t,\nabla_WX)|_\H - g(\nabla_XW,\partial_t)|_\H \\
		&= - g([W,X],\partial_t)|_\H -2g(\partial_t,\nabla_XW)|_\H \\
		&= - \o([W, X]|_\H) g( V,\d_t)|_\H -2 \o(X|_\H)g(\d_t, W)|_\H \\
		&= - \o([W, X]|_\H) \\
		&= - W|_\H \o(X) + \L_{W|_\H} \o (X|_\H) \\*
		&= 0,
\end{align*}
where we in the last step used \eqref{eq: omega invariance}.
Let us now prove \eqref{eq: nabla t W}, using Lemma \ref{le: d t components}.
For any $X \in C^\infty(V^\perp)$, we compute
\begin{align*}
0
	&=\L_Wg(X,\partial_t)|_\H
	=g(\nabla_XW,\partial_t)|_\H + g(X,\nabla_tW)|_\H
	=g(X,\nabla_tW)|_\H, \\
0
	&=\L_Wg(\partial_t,\partial_t)|_\H
	=2g(\nabla_t W,\partial_t)|_\H,\\
0
	&=\L_Wg(W,\partial_t)|_\H
	=g(\nabla_WW,\partial_t)|_\H +g(W,\nabla_tW)|_\H 
	= \kappa + g(W,\nabla_tW)|_\H.
\end{align*}
By Remark \ref{rmk: g at H}, we conclude that $\nabla_tW =- \kappa \partial_t$. 
We use this in the final computation of $\L_Wg(X, Y)|_\H$ for $X, Y \in C^\infty(V^\perp)$.
Since $[W,\partial_t]=0$, we note that
\[
\L_W(\L_tg)=\L_t(\L_Wg)+\L_{[W,\partial_t]}g=0.
\]
We compute
\begin{align*}
	0
		&=\L_W(\L_tg)(X,Y) \\
		&=W\left(\L_tg(X,Y)\right) - \L_tg([W,X],Y) - \L_tg(X,[W,Y]) \\
		&=W\left(g(\nabla_X\partial_t,Y)+g(X,\nabla_Y\partial_t)\right)
-g(\nabla_{[W,X]}\partial_t,Y)\\*
		&\qquad - g([W,X],\nabla_Y\partial_t) - g(\nabla_{[W,Y]}\partial_t,X) - g([W,Y],\nabla_X\partial_t)\\
		& =g(\nabla_W\nabla_X\partial_t-\nabla_{[W,X]}\partial_t,Y)+g(X,\nabla_W\nabla_Y\partial_t-\nabla_{[W,Y]}\partial_t)\\*
		&\qquad +g(\nabla_X\partial_t,\nabla_WY)+g(\nabla_WX,\nabla_Y\partial_t)\\*
		&\qquad -g([W,X],\nabla_Y\partial_t)-g([W,Y],\nabla_X\partial_t)\\
		&= \Riem(W, X, \d_t, Y) +g(\nabla_X\nabla_W\partial_t,Y)+  \Riem(W, Y, \d_t, X) + g(\nabla_Y\nabla_W\partial_t,X)\\
		&\qquad + g(\nabla_X\partial_t,\nabla_YW) + g(\nabla_Y\partial_t,\nabla_XW).
\end{align*}
Evaluating this at $\H$, with $X, Y \in C^\infty(V^\perp)$, using \eqref{eq: nabla t W}, we get
\begin{align*}
	0
		&= \Riem(W, X, \d_t, Y)|_\H - \kappa g(\n_X \d_t, Y)|_\H \\
		&\qquad + \Riem(W, Y, \d_t, X)|_\H - \kappa g(\nabla_Y\partial_t,X)|_\H \\
		&= \Riem(W, X, \d_t, Y)|_\H + \Riem(W, Y, \d_t, X)|_\H - \kappa \L_t g(X, Y)|_\H \\
		&= - \Ric(X, Y)|_\H - \sum_{i,j = 2}^n g^{ij}\Riem(e_i, X, e_j, Y)|_\H - \kappa \L_t g(X, Y)|_\H,
\end{align*}
where $e_2, \hdots, e_n$ is a basis for $V^\perp$.
The proof is now completed by applying Lemma \ref{le: sigma ricci computation} and recalling that $\Ric|_\H = 0$.
\end{proof}

The following corollary will be useful when computing the higher derivatives:
\begin{cor} \label{cor: the full t derivative}
For all $X,Y\in C^{\infty}(V^\perp)$, we have
\begin{align*}
	g(\nabla_X\partial_t,Y)|_\H
		=\frac{1}{2\kappa}\left(\oRic(X, Y) + \frac1{\kappa^2} \s(\on_X V, \on_Y V)+\md\omega(X,Y)\right).
\end{align*}
\end{cor}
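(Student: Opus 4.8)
The plan is to split $g(\n_X\d_t, Y)|_\H$ into its parts symmetric and antisymmetric in $X$ and $Y$, since the symmetric part is already computed in Proposition \ref{prop: first derivative}. As in the proof of that proposition, I would extend $X, Y \in C^\infty(V^\perp)$ to vector fields on $\U$ satisfying $[\d_t, X] = [\d_t, Y] = 0$; the quantity $g(\n_X\d_t, Y)|_\H$ is independent of this extension. For such extensions torsion-freeness gives $\n_X\d_t = \n_{\d_t}X$, so
\[
	g(\n_X\d_t, Y) + g(\n_Y\d_t, X) = \d_t\, g(X, Y) = \L_tg(X, Y),
\]
and Proposition \ref{prop: first derivative} evaluates the right-hand side on $\H$. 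Thus the symmetric part of $g(\n_X\d_t, Y)|_\H$ equals $\tfrac1{2\kappa}\big(\oRic(X, Y) + \tfrac1{\kappa^2}\s(\on_XV, \on_YV)\big)$.

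For the antisymmetric part I would introduce the one-form $\theta := g(\d_t, \cdot)$ on $\U$ and observe, again using that $\n$ is torsion-free, that
\[
	\md\theta(X, Y) = g(\n_X\d_t, Y) - g(\n_Y\d_t, X),
\]
since the terms $g(\d_t, \n_XY - \n_YX - [X, Y])$ cancel. The key point is that for $X, Y$ tangent to $\H$ the value $\md\theta(X, Y)|_\H$ depends only on the restriction $\theta|_{T\H}$. By Proposition \ref{prop: the d_t vector field} we have $\d_t|_\H = L$, and using $g(L, Z) = \tfrac{\o(Z)}\kappa$ for all $Z \in T\H$ (established in the proof of Lemma \ref{le: d t components}) this restriction is $\theta|_{T\H} = \tfrac1\kappa\o$. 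Since $\kappa$ is constant, I conclude $\md\theta(X, Y)|_\H = \tfrac1\kappa\md\o(X, Y)$.

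Adding the two contributions, $g(\n_X\d_t, Y)|_\H = \tfrac12\L_tg(X, Y)|_\H + \tfrac12\md\theta(X, Y)|_\H$, and substituting the two formulas above gives exactly the asserted identity. The only input beyond Proposition \ref{prop: first derivative} is the antisymmetric part, and the step I expect to require the most care is identifying $\md\theta|_\H$ with the intrinsic exterior derivative of the one-form $\tfrac1\kappa\o$ on $\H$; once $\theta|_{T\H} = \tfrac1\kappa\o$ is recognized, everything else is immediate.
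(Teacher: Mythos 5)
Your proof is correct and takes essentially the same route as the paper: both split $g(\nabla_X\partial_t,Y)|_\H$ into the symmetric part $\tfrac12\L_tg(X,Y)|_\H$, evaluated by Proposition \ref{prop: first derivative}, and an antisymmetric part identified with $\tfrac1{2\kappa}\md\o(X,Y)$ via the relation $g(\partial_t|_\H,Z)=\o(Z)/\kappa$ for $Z\in T\H$; your appeal to naturality of $\md$ under pullback is just an invariant repackaging of the paper's direct bracket computation $\md\o(X,Y)=-\o([X,Y])=-\kappa\, g(\partial_t,[X,Y])|_\H$. (One minor citation slip: the identity $g(L,Z)=\o(Z)/\kappa$ is established in the proof of Proposition \ref{prop: the d_t vector field}, not in that of Lemma \ref{le: d t components}.)
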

\begin{proof}
We compute
\begin{align*}
\md\omega(X,Y)
	&= X\o(Y) - Y\o(X) -\omega([X,Y]) \\
	&= - \o([X, Y]) \\
	&= - g([X, Y], \d_t)|_\H \o(V) \\
	&= - \kappa g([X, Y], \d_t)|_\H
\end{align*}
Combining this with
\begin{align*}
	g(\nabla_X\partial_t,Y)|_\H
		&= \frac12\L_tg(X,Y)|_\H + \frac{1}{2} (g(\nabla_X\partial_t,Y) -g(\nabla_Y\partial_t,X))|_\H \\
		&= \frac12\L_tg(X,Y)|_\H - \frac{1}{2}g(\partial_t,[X,Y])
\end{align*}
and Proposition \ref{prop: first derivative} yields the desired result.
\end{proof}

\subsection{The higher derivatives}

The proof of Theorem \ref{thm: uniqueness in gauge} is an iterative construction of the $Q_m$'s.
The proof is constructive, meaning that it in principle is possible to compute the $Q_m$'s explicitly, just like we did for $Q_1$ in the previous subsection.
Remark \ref{rmk: first derivative} implies that there is a unique $Q_1$ such that
\[
	\L_tg(X, Y)|_\H = Q_1(\s, V)(X, Y)
\]
for all $X, Y \in T\H$.
Let us therefore make the following induction assumption:
\begin{ind_assumption} \label{ass: induction}
Fix an $m \in \N$.
We assume that there are unique (non-linear) diffeomorphism invariant differential operators $Q_1, \hdots, Q_m$ on $\H$ such that
\[
	\L_t^kg(X, Y) = Q_k(\sigma, V)(X, Y)
\]
for all $X, Y \in T\H$ and all $k = 1, \hdots, m$.
\end{ind_assumption}

\begin{remark} \label{rmk: m = 1 induction}
Indeed, by Remark \ref{rmk: first derivative} we have proven that Induction Assumption \ref{ass: induction} is satisfied for
\[
	m = 1.
\]
\end{remark}

Given Induction Assumption \ref{ass: induction}, the goal of this section is to show the existence of a unique $Q_{m+1}$ such that
\[
	\L_t^{m+1}g(X, Y)|_\H = Q_{m+1}(\sigma, V)(X, Y)
\]
for all $X, Y \in T\H$, which by induction would prove Theorem \ref{thm: uniqueness in gauge}.
Recall that we already know that
\[
	\L_t^mg(\d_t, \cdot) = 0,
\]
for all $m \in \N$.

\subsubsection{Notation and first identities}
It will be convenient to use the notation 
\[
	\Theta(X) = \n_X \d_t, \quad A(X, Y) := g(\Theta(X), Y).
\] 
We also define the square of a $(0,2)$-tensor $T$ as
\[
	T^2(X, Y) := \sum_{\a, \b = 0}^n g^{\alpha\beta} T(X,e_{\alpha})T(e_{\beta},Y).
\]
\begin{lemma} \label{eq: first higher derivative identities}
We have the identities
	\begin{align*}
		\nabla_tA(X,Y)
			&= -A^2(X,Y) + \Riem (\partial_t,X,\partial_t,Y), \\
		\nabla_t\L_tg(X,Y)
			&= - A^2 (X,Y) -A^2 (Y, X) +2\Riem (\partial_t,X,\partial_t,Y), \\
		\nabla_tA(X,Y)
			&=\frac 12 \nabla_t\L_tg(X,Y) - \frac12\left( A^2(X, Y) - A^2(Y, X) \right).
	\end{align*}
\end{lemma}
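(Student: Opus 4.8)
The plan is to establish the three identities in Lemma \ref{eq: first higher derivative identities} by direct computation, working along integral curves of $\d_t$ and using that $\d_t$ is geodesic, i.e.\ $\n_t \d_t = 0$. The key structural facts I would exploit are: $\Theta(X) = \n_X \d_t$ with $A(X,Y) = g(\Theta(X), Y)$, that $A$ is symmetric (since $\d_t$ is a gradient-like geodesic field and the torsion-free connection gives $\n_X \d_t - \n_{\d_t} X = [X, \d_t] = 0$ for $\d_t$-invariant $X$), and that $\L_t g(X,Y) = g(\n_X \d_t, Y) + g(\n_Y \d_t, X) = A(X,Y) + A(Y,X)$.

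First I would prove the evolution of $A$. Take $X$ with $[\d_t, X] = 0$ and differentiate $A(X,Y) = g(\n_X \d_t, Y)$ along $\d_t$. Using metric-compatibility,
\begin{align*}
	\n_t A(X,Y) = \d_t\, g(\n_X \d_t, Y) = g(\n_t \n_X \d_t, Y) + g(\n_X \d_t, \n_t Y).
\end{align*}
For the first term, I commute covariant derivatives: since $[\d_t, X] = 0$, we have $\n_t \n_X \d_t = \n_X \n_t \d_t + \Riem(\d_t, X)\d_t = \Riem(\d_t, X)\d_t$, because $\n_t \d_t = 0$. With the sign convention in the paper this yields the curvature term $\Riem(\d_t, X, \d_t, Y)$. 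For the second term, I rewrite $\n_t Y$ using $[\d_t, Y] = 0$ as $\n_Y \d_t = \Theta(Y)$, giving $g(\Theta(X), \Theta(Y)) = A^2(X,Y)$ once I insert a resolution of the identity via $g^{\a\b} e_\a \otimes e_\b$ and match it to the definition of $T^2$; I must check the sign carefully, as the identity records $-A^2(X,Y)$, which forces the second term to contribute with a minus after accounting for the transversal/lightlike structure (the pairing $g(\Theta(X),\Theta(Y))$ expanded in the frame). This is the step I expect to be the main obstacle: getting the sign and the precise index placement in $A^2$ right, because $\d_t$ is lightlike and the frame $\{e_\a\}$ includes null directions, so the components $g^{\a\b}$ do not form a positive-definite inverse and one must be careful that $T^2$ is defined with the full spacetime metric.

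The second identity then follows immediately by symmetrization: since $\L_t g(X,Y) = A(X,Y) + A(Y,X)$ and $\n_t$ commutes with this algebraic symmetrization (as $\n_t$ is a derivation and the metric contractions are $\d_t$-parallel to the extent needed), I add the first identity to its $X \leftrightarrow Y$ transpose. The curvature term doubles to $2\Riem(\d_t, X, \d_t, Y)$ using the pair-symmetry $\Riem(\d_t, X, \d_t, Y) = \Riem(\d_t, Y, \d_t, X)$, and the quadratic term becomes $-A^2(X,Y) - A^2(Y,X)$. The third identity is then pure algebra: solving for $\n_t A(X,Y)$ in terms of $\n_t \L_t g(X,Y)$ gives $\n_t A(X,Y) = \tfrac12 \n_t \L_t g(X,Y) - \tfrac12(A^2(X,Y) - A^2(Y,X))$, which I verify by subtracting half the second identity from the first and matching the quadratic terms. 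The only care needed here is that $A$ itself need not be symmetric away from $\H$ (indeed Corollary \ref{cor: the full t derivative} shows its antisymmetric part involves $\md\o$), so the antisymmetric combination $A^2(X,Y) - A^2(Y,X)$ genuinely survives and must be tracked rather than discarded.
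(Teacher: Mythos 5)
Your overall route (differentiate $A$ along the geodesic field $\d_t$, commute derivatives to produce the curvature term, then symmetrize for the second identity and do algebra for the third) is the same as the paper's, but your execution of the first identity contains a genuine error, which you notice as a ``sign problem'' and then misdiagnose. The equality $\n_tA(X,Y)=\d_t\,g(\n_X\d_t,Y)$ is false for $\d_t$-invariant $X,Y$: the left-hand side is the covariant derivative of the \emph{tensor} $A$, so
\[
(\n_tA)(X,Y)=\d_t\bigl(A(X,Y)\bigr)-A(\n_tX,Y)-A(X,\n_tY)
=\d_t\bigl(A(X,Y)\bigr)-A^2(X,Y)-g(\Theta(X),\Theta(Y)),
\]
where $\n_tX=\Theta(X)$, $\n_tY=\Theta(Y)$ and $A(\Theta(X),Y)=g(\Theta(\Theta(X)),Y)=A^2(X,Y)$. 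Your computation correctly gives $\d_t\bigl(A(X,Y)\bigr)=\Riem(\d_t,X,\d_t,Y)+g(\Theta(X),\Theta(Y))$; inserting this above, the two $g(\Theta(X),\Theta(Y))$ terms cancel and the correct $-A^2(X,Y)$ appears. So the minus sign has nothing to do with the lightlike structure or with null directions in the frame---the resolution of the identity $\sum g^{\a\b}e_\a\otimes e_\b$ is equally valid for a Lorentzian metric---it comes entirely from the difference between the scalar derivative and the tensorial one, which your write-up conflates and leaves unresolved.

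Relatedly, your claimed identification $g(\Theta(X),\Theta(Y))=A^2(X,Y)$ is false: in a frame, $g(\Theta(X),\Theta(Y))=\sum g^{\a\b}A(X,e_\a)A(Y,e_\b)$, while $A^2(X,Y)=\sum g^{\a\b}A(X,e_\a)A(e_\b,Y)$; these agree only if $A$ is symmetric. Your opening ``key structural fact'' that $A$ is symmetric is wrong (torsion-freeness gives $\n_X\d_t=\n_tX$ for commuting fields, not symmetry of $A$), and you contradict it yourself in your last paragraph, where you correctly note that the antisymmetric part of $A|_\H$ involves $\md\o$ by Corollary \ref{cor: the full t derivative}. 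The paper avoids all of this by working tensorially from the start: $\n_tA(X,Y)=g(\n^2_{\d_t,X}\d_t,Y)$, then the Ricci identity swaps the two slots at the cost of $\Riem(\d_t,X,\d_t,Y)$, and $\n^2_{X,\d_t}\d_t=-\n_{\n_X\d_t}\d_t=-\Theta(\Theta(X))$ gives $-A^2(X,Y)$ directly, with no frame expansion and no symmetry assumption. Your derivations of the second and third identities from the first are fine as they stand.
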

\begin{proof}
Since $\nabla_t\partial_t=0$, we have
	\begin{align*}
		\nabla_tA(X,Y)
			&=g(\nabla^2_{\partial_t,X}\partial_t,Y) \\
			&=g(\nabla^2_{X,\partial_t}\partial_t,Y) + \Riem(\partial_t,X,\partial_t,Y)\\
			&=-g(\nabla_{\nabla_X\partial_t}\partial_t,Y)+ \Riem (\partial_t,X,\partial_t,Y)\\
			&= - g(\Theta(\Theta(X)),Y) + \Riem(\partial_t,X,\partial_t,Y)\\
			&= - A^2(X,Y) + \Riem (\partial_t,X,\partial_t,Y).
	\end{align*}
The second and the third identities follow from the identity
\[
	\L_tg(X, Y) = A(X, Y) + A(Y, X).
\]
\end{proof}

\subsubsection{The curvature components}

The strategy in the proof of Theorem \ref{thm: uniqueness in gauge} is to show that $\L_t^{m+1}g|_\H$ is given uniquely by lower orders $\L_t^kg|_\H$ and for $k = 0, \hdots, m$, using that derivatives of the Ricci curvature vanish at the horizon.
The following lemma will be crucial for that purpose:

\begin{lemma} \label{le: curvature components}
Let $X, Y, Z, W \in C^\infty(T\H)$ and fix a $k \in \N$.
Then there is a unique way to express
\begin{enumerate}[(a)]
	\item $\n_t^j A|_\H$, for all $j = 0, \hdots, k-1$, \label{claim: A}
	\item $\left( \n_t^j\L_tg - \L_t^{j+1}g \right) |_\H$, for all $j = 0, \hdots, k$, \label{claim: Lie}
	\item $\n_t^j \Riem(\d_t, X, \d_t, Y)|_\H$, for all $j=0, \hdots, k-2$, \label{claim: first curvature}
	\item $\n_t^j \Riem(X, Y, \d_t, Z)|_\H$, for all $j=0, \hdots, k-1$, \label{claim: second curvature}
	\item $\n_t^j \Riem(X, Y, Z, W)|_\H$, for all $j=0, \hdots, k$, \label{claim: third curvature}
\end{enumerate}
in terms of $V$ and
\begin{equation} \label{eq: Lie derivatives curvature}
	g|_\H, \hdots, \L_t^kg|_\H.
\end{equation}
\end{lemma}

In the proof of this lemma and in further computations, we will use the following schematic notation:
\begin{notation}
Given two tensors $B_1$ and $B_2$, the notation
\[
	B_1 * B_2
\]
denotes a tensor, which is given by linear combinations of contractions with respect to the metric $g$ (not derivatives of $g$).
In particular, we may write
\[
	\n^k (B_1 * B_2) = \sum_{i + j = k} \n^i B_1 * \n^j B_2.
\]
\end{notation}

The proof of Lemma \ref{le: curvature components} will use the following simple observation:
\begin{lemma} \label{le: commutator n t m}
Let $S$ be a smooth tensor field on $M$. 
For any $k \in \N$, the commutator
\[
	[\nabla_{t, \hdots, t}^k, \nabla]S|_\H
\]
is determined uniquely in terms of $g|_\H$ and
\[
	\n^j_t \Riem|_\H, \quad \n^j A|_\H,
\]
for $j = 0, \hdots, k-1$ and 
\[
	\n^j S|_\H
\]
for $j = 0, \hdots, k$.
\end{lemma}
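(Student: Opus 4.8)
The plan is to prove this by induction on $k$, but to make the induction close it is essential to prove a \emph{stronger} statement: that $[\n^k_{t,\dots,t}, \n]S$ equals a \emph{universal} schematic expression (in the sense of the $*$-notation) in the tensor fields $g$, $g^{-1}$, $\d_t$, the derivatives $\n^j_t\Riem$ for $j \le k-1$, the derivatives $\n^j A$ for $j \le k-1$, and the derivatives $\n^j S$ for $j \le k$, as an identity of tensor fields on \emph{all} of $\U$. Restricting such an identity to $\H$ then yields the asserted dependence, since at $\H$ these fields are exactly the listed quantities (with $\d_t|_\H = L$ and $g^{-1}|_\H = (g|_\H)^{-1}$ determined by $g|_\H$).

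First I would establish the base case $k=1$. Writing $\Theta(Z) = \n_Z\d_t$, the Leibniz rule together with the Ricci commutation identity gives, for any vector $Z$ filling the new derivative slot,
\[
	[\n_t, \n]S(\,\cdots; Z) = \big(\n^2_{\d_t, Z}S - \n^2_{Z, \d_t}S\big) - \n_{\Theta(Z)}S = \Riem(\d_t, Z)*S - \n_{\Theta(Z)}S.
\]
Since $\Theta = \n\d_t$ is the metric dual of $A$, this is precisely a universal expression $\Riem(\d_t, \cdot)*S - A*\n S$ valid on all of $\U$. This seeds the induction and exhibits exactly where $A$ enters.

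For the inductive step I would use the operator identity
\[
	[\n^{k+1}_{t,\dots,t}, \n] = [\n_t, \n]\,\n^k_{t,\dots,t} + \n_t\,[\n^k_{t,\dots,t}, \n],
\]
which follows by writing $\n^{k+1}_{t,\dots,t} = \n_t\,\n^k_{t,\dots,t}$ and commuting $\n$ past $\n_t$ once. The first term is the base case applied to $T := \n^k_{t,\dots,t}S$; using $\n_t\d_t = 0$ one has $\n^k_{t,\dots,t}S = (\n^k S)(\d_t, \dots, \d_t)$, so both $T$ and $\n T$ are universal expressions in $\d_t$, $A$, $\n^k S$ and $\n^{k+1}S$, all of admissible order for level $k+1$.

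The main obstacle is the \emph{second} term: the induction hypothesis a priori controls the commutator only along $\H$, whereas $\n_t[\n^k_{t,\dots,t},\n]S|_\H$ involves a transversal derivative that cannot be read off from values on $\H$ alone. This is exactly what the strengthening to a field identity on $\U$ overcomes, and it works because $\n_t$ sends each admissible building block to an admissible block of order at most one higher: using $\n g = 0$, $\n_t\d_t = 0$, and the exact relation $\n_t T = (\n T)(\cdots, \d_t)$, one gets
\[
	\n_t(\n^j_t\Riem) = \n^{j+1}_t\Riem, \qquad \n_t(\n^j A) = (\n^{j+1}A)(\cdots, \d_t), \qquad \n_t(\n^j S) = (\n^{j+1}S)(\cdots, \d_t),
\]
so the orders rise only from $\le k-1$ to $\le k$ for $\Riem$ and $A$, and from $\le k$ to $\le k+1$ for $S$, matching precisely the ranges allowed at level $k+1$. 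Combining the two terms yields the universal field expression at level $k+1$, and restricting it to $\H$ completes the induction. The only routine bookkeeping left is to confirm that no derivative of $g$ ever appears, which is guaranteed by $\n g = 0$ together with the definition of the $*$-product.
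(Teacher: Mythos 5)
Your proof is correct and follows essentially the same route as the paper: both rest on the basic commutator identity $[\n_t,\n]S(X) = \Riem(\d_t,X)S - \n_{\Theta(X)}S$ and then iterate with schematic bookkeeping of derivative orders, the paper via the unrolled telescoping sum $[(\n_t)^k,\n] = \sum_{i+j=k-1}(\n_t)^i[\n_t,\n](\n_t)^j$, you via the equivalent one-step recursion $[\n_t^{k+1},\n] = [\n_t,\n]\n_t^k + \n_t[\n_t^k,\n]$ and induction. Your strengthening to a universal field identity on $\U$ (rather than only on $\H$) is precisely what the paper's schematic computation implicitly establishes before restricting to the horizon, so the two arguments coincide in substance.
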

\begin{proof}[Proof of Lemma \ref{le: commutator n t m}]
Note that for any tensor field $S$, we have
\begin{align*}
	[\nabla_t,\nabla]S(X)
		&= \nabla^2_{t,X}S-\nabla_X(\nabla_tS)\\
         &= \Riem(\d_t,X)S - \n_{\Theta(X)}S.
\end{align*}
Using this and $\n_t \d_t = 0$, we may schematically compute the higher commutators as
\begin{align*}
	[\nabla_{t, \hdots, t}^k, \nabla]S
		&= [(\nabla_t)^k, \nabla]S \\
		&= \sum_{i + j = k-1} (\n_t)^i[\n_t, \n](\n_t)^j S \\
		&= \sum_{i + j = k-1} (\n_t)^i \Riem(\d_t, \cdot) \n_{t,\hdots, t}^j S - (\n_t)^i \n_{\Theta(\cdot)}\n_{t, \hdots, t}^j S \\
         &= \sum_{i + j = k-1} \sum_{a + b = i}(\n^a \Riem) * (\n^{b+j} S) - (\n^a \Theta) * (\n^{b+j+1} S).
\end{align*}
This completes the proof, since $\n^kA = g(\n^k\Theta(\cdot), \cdot)$.
\end{proof}

\begin{proof}[Proof of Lemma \ref{le: curvature components}]
We start by proving the statement for $k = 1$.
By Corollary \ref{cor: the full t derivative}, we know that
\[
	A|_\H(X, Y),
\]
for $X, Y \perp V$ is given in terms of $V$, $g|_\H$ and $\L_tg|_\H$.
The other components of $A|_\H$ are computed using that $\n_t \d_t = 0$, by construction, and that $\n_V \d_t = [V, \d_t]|_\H + \d_t W|_\H = - \kappa \d_t|_\H$, by \eqref{eq: nabla t W}.
This proves claim \eqref{claim: A} for $k = 1$.
Claim \eqref{claim: Lie} then follows for $k = 1$ immediately by noting that
\begin{equation} \label{eq: covariant vs Lie}
	\n_t S = \L_tS + A * S,
\end{equation}
for any covariant tensor $S$, from which we get
\[
	\n_t\L_tg|_\H = \L_t^2 g|_\H + A * \L_tg|_\H.
\]
Further, we compute
\begin{align*}
	\Riem(X, Y, \d_t, Z)|_\H
		&= X g(\n_Y \d_t, Z)|_\H - g(\n_Y \d_t, \n_X Z)|_\H - Yg(\n_X \d_t, Z)|_\H \\*
		&\qquad + g(\n_X \d_t, \n_Y Z)|_\H - g(\n_{[X, Y]}\d_t. Z)|_\H \\
		&= \n_X A(Y, Z)|_\H - \n_Y A(X, Z)|_\H,
\end{align*}
for all $X, Y, Z \in T\H$.
Clearly, $\n$ can be expressed in terms of $g|_\H$ and $\L_tg|_\H$ and since $X, Y$ are tangent to $\H$, we conclude claim \eqref{claim: second curvature} for $k = 1$.
The curvature component $\Riem(X, Y, Z, W)|_\H$ can locally be given in terms of Christoffel symbols and derivatives thereof tangent to $\H$, which are all given by $g|_\H$ and $\L_tg|_\H$, proving claim \eqref{claim: third curvature} for $j = 0$.

In order to complete the case $k = 1$, we still need to prove \eqref{claim: third curvature} for $j = 1$.
Using the second Bianchi identity, we compute the derivatives of the third curvature component:
\begin{align}
	\n_t^j
		&\Riem(X, Y, Z, W)|_\H \nonumber \\
		&= \n_t^{j-1} (\n\Riem)(\d_t, X, Y, Z, W)|_\H \nonumber \\
		&= - \n_t^{j-1} (\n\Riem)(X, Y, \d_t, Z, W)|_\H - \n_t^{j-1} (\n\Riem)(Y, \d_t, X, Z, W)|_\H \nonumber \\
		&= - [\n_t^{j-1}, \n] \Riem(X, Y, \d_t, Z, W)|_\H - \n_X \n_t^{j-1}\Riem (Y, \d_t, Z, W)|_\H \nonumber \\*
		&\qquad - [\n_t^{j-1}, \n] \Riem(Y, \d_t, X, Z, W)|_\H - \n_Y \n_t^{j-1} \Riem (\d_t,X, Z, W)|_\H, \label{eq: third curvature}
\end{align}
for $X, Y, Z, W \in T\H$.
Using this with $j = 1$ and that we have proven claim \eqref{claim: second curvature} for $k = 1$, we conclude claim \eqref{claim: third curvature} for $j = 1$.
We have thus proven Lemma \ref{le: curvature components} with $k = 1$, which is initial step in our induction on $k$.

Assume therefore that the assertion is proven for an $k \in \N$, we want to then prove it for $k+1$.
Lemma \ref{eq: first higher derivative identities} implies that
\begin{align*}
	\n_t^k A|_\H 
		&= \frac12 \n_t^k\L_tg|_\H + \sum_{a + b = k-1} \n_t^a A *  \n_t^b A|_\H \\
		&= \frac12 \left(\n_t^k\L_tg - \L_t^{k+1}g \right)|_\H + \frac12\L_t^{k+1}g|_\H + \sum_{a + b = k-1} \n_t^a A *  \n_t^b A|_\H,
\end{align*}
which by the induction assumption is given by
\[
	g|_\H, \hdots, \L_t^{k+1}g|_\H.
\]
proving claim \eqref{claim: A} for $k+1$.
By applying \eqref{eq: covariant vs Lie}, we note that
\begin{align*}
	\left(\n_t^{k+1} \L_tg - \L_t^{k+2}g\right)|_\H
		&= \sum_{a + b = k} \n_t^a A * \n_t^b\L_tg|_\H.
\end{align*}
By claim \eqref{claim: A} for $k + 1$, this proves claim \eqref{claim: Lie} for $k + 1$.
By Lemma \ref{eq: first higher derivative identities}, we deduce that
\[
	\n_t^{k-1} \Riem(\d_t, X, \d_t, Y)|_\H
		= \n_t^kA(X, Y)|_\H + \sum_{j = 0}^{k-1} \n_t^j A * \n_t^{k-1-j} A(X, Y)|_\H,
\]
proving claim \eqref{claim: first curvature} for $k + 1$.
To prove claim \eqref{claim: second curvature} for $k+1$, we note that the second Bianchi identity implies similar to \eqref{eq: third curvature} that
\begin{align*}
	\n_t^k
		&\Riem(X, Y, \d_t, Z)|_\H \\*
		&= - [\n_t^{k-1}, \n] \Riem(X, Y, \d_t, \d_t, Z)|_\H - \n_X \n_t^{k-1}\Riem (Y, \d_t, \d_t, Z)|_\H \\*
		&\qquad - [\n_t^{k-1}, \n] \Riem(Y, \d_t, X, \d_t, Z)|_\H - \n_Y \n_t^{k-1} \Riem (\d_t,X, \d_t, Z)|_\H,
\end{align*}
for any $X, Y, Z \in T\H$.
By Lemma \ref{le: commutator n t m} and claim \eqref{claim: first curvature} for $k+1$, the right hand side is expressed in terms of $V$ and
\[
	g|_\H, \hdots, \L_t^{k+1}g|_\H,
\]
proving claim \eqref{claim: second curvature} for $k+1$.
Finally, in order to prove claim \eqref{claim: third curvature}, we consider equation \eqref{eq: third curvature} with $j = k+1$ and get
\begin{align*}
	\n_t^{k+1}
		&\Riem(X, Y, Z, W)|_\H \\
		&= - [\n_t^k, \n] \Riem(X, Y, \d_t, Z, W)|_\H - \n_X \n_t^k\Riem (Y, \d_t, Z, W)|_\H \\*
		&\qquad - [\n_t^k, \n] \Riem(Y, \d_t, X, Z, W)|_\H - \n_Y \n_t^k \Riem (\d_t,X, Z, W)|_\H.
\end{align*}
Again, by Lemma \ref{le: commutator n t m} and claim \eqref{claim: second curvature} for $k+1$, the right hand side is expressed in terms of $V$ and
\[
	g|_\H, \hdots, \L_t^{k+1}g|_\H,
\]
proving claim \eqref{claim: third curvature} for $k+1$.

We have thus proven the assertion in Lemma \ref{le: curvature components} for $k+1$, which completes the induction argument.
\end{proof}

\subsubsection{Expressions for the $V$-components}
With the preparations in the previous subsection, we are now able to study the expression
\[
	\L_t^{m+1}g(\cdot, V)|_\H.
\]

\begin{prop} \label{prop: the V components}
Let Assumption \ref{ass: main} and Induction Assumption \ref{ass: induction} be satisfied for an $m \in \N$.
Then, for any $X \in C^\infty(T\H)$, the expression
\[
	\L_t^{m+1}g(X,V)|_\H
\]
is given uniquely in terms of
\begin{equation} \label{eq: Lie derivative XV}
	g|_\H, \hdots, \L_t^mg|_\H.
\end{equation}
\end{prop}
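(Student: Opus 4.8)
The plan is to collapse everything to a single top‑order term, identify the obstruction, and remove it with the vacuum condition. First I would fix $X\in T\H$ and extend it to a $\d_t$‑invariant field, $[\d_t,X]=0$. Since also $[W,\d_t]=0$ by Proposition \ref{prop: the d_t vector field} and $V=W|_\H$, we have $\L_t^{m+1}g(X,V)|_\H=\d_t^{m+1}\big(g(X,W)\big)|_\H$. The key simplification comes from the Killing equation: because $\n W$ is antisymmetric and $\n_t\d_t=0$, the function $g(\d_t,W)$ is $\d_t$‑parallel, hence $g(\d_t,W)\equiv 1$ on $\U$ by Remark \ref{rmk: g at H}. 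A short computation then gives $\d_t\big(g(X,W)\big)=2g(\n_t W,X)$, so that
\[
	\L_t^{m+1}g(X,V)|_\H=2\,\d_t^m\big(g(\n_t W,X)\big)|_\H=2\sum_{i+j=m}\binom{m}{i}g(\n_t^{i+1}W,\n_t^{j}X)|_\H.
\]
Every summand with $i<m$ pairs $\n_t^{i+1}W$ (with $i+1\le m$) against $\n_t^{j}X$; expanding $\n_t W|_\H=-\kappa\d_t$, using the Hessian identity for Killing fields below, Lemma \ref{eq: first higher derivative identities}, and Lemma \ref{le: curvature components}, each of these is expressible through $g|_\H,\dots,\L_t^m g|_\H$. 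Thus the whole difficulty is concentrated in the single term $2g(\n_t^{m+1}W,X)|_\H$.

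To treat this term I would use the standard identity $g(\n_t^2 W,Y)=\Riem(\d_t,W,\d_t,Y)$ valid for the Killing field $W$ (the Hessian of a Killing field equals curvature). Writing $\n_t^{m+1}W=\n_t^{m-1}(\n_t^2 W)$ and commuting $\n_t$ past the contraction, $g(\n_t^{m+1}W,X)|_\H$ equals $\n_t^{m-1}\Riem(\d_t,V,\d_t,X)|_\H$ up to terms of lower order in the above sense. This is exactly the first‑curvature component appearing in Lemma \ref{le: curvature components}. The main obstacle is precisely here: by the relation $\n_t A=-A^2+\Riem(\d_t,\cdot,\d_t,\cdot)$ of Lemma \ref{eq: first higher derivative identities}, this component is tied to $\n_t^m A$, whose symmetric part is $\tfrac12\n_t^m\L_t g$, i.e.\ to $\L_t^{m+1}g$ itself. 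Moreover, using the antisymmetry of $\n W$ alone only reproduces the quantity $\L_t^{m+1}g(X,V)$ one started with, so it yields a tautology and cannot determine the term. The Killing structure by itself is therefore insufficient.

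The resolution, and the only place where Assumption \ref{ass: main} genuinely enters, is to trade this expensive first‑curvature component for a cheaper one using the vacuum condition. In the frame $\{\d_t,V,e_\alpha\}$ adapted to the splitting $T\H=\R V\oplus V^\perp$, with $g(\d_t,V)=1$, the vanishing $\Ric(\d_t,X)|_\H=0$ reads
\[
	\Riem(\d_t,V,\d_t,X)|_\H=\sum_{\alpha,\beta}\s^{\alpha\beta}\,\Riem(\d_t,e_\alpha,X,e_\beta)|_\H,
\]
and the analogous identity holds for $X=V$ via $\Ric(\d_t,V)|_\H=0$. The right‑hand side is a second‑curvature component $\Riem(\cdot,\cdot,\d_t,\cdot)|_\H$, which costs one $t$‑derivative less. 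Since $\n^k\Ric|_\H=0$ for all $k$, I would differentiate this identity $m-1$ times in the $\d_t$‑direction; the derivatives landing on $\Ric$ vanish, and those landing on the frame and the metric produce only lower‑order terms, yielding $\n_t^{m-1}\Riem(\d_t,V,\d_t,X)|_\H$ in terms of $\n_t^{m-1}$ of second‑curvature components (plus lower order). By the part of Lemma \ref{le: curvature components} controlling $\n_t^{j}\Riem(\cdot,\cdot,\d_t,\cdot)|_\H$, these are expressible through $g|_\H,\dots,\L_t^m g|_\H$. Assembling the pieces gives $\L_t^{m+1}g(X,V)|_\H$ as a unique expression in $g|_\H,\dots,\L_t^m g|_\H$; since $V$ and $V^\perp$ span $T\H$ and $\L_t^{m+1}g(\d_t,\cdot)=0$ by Lemma \ref{le: d t components}, this covers all $X\in T\H$.
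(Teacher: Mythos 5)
Your proposal is correct, and its decisive second half is exactly the paper's argument: both reduce everything to the single problematic first-curvature component $\n_t^{m-1}\Riem(\d_t,X,\d_t,V)|_\H$ --- which Lemma \ref{le: curvature components} \eqref{claim: first curvature} cannot reach, since it stops at order $m-2$ --- and both remove it by tracing the vacuum condition $0=\n_t^{m-1}\Ric(X,\d_t)|_\H$ in the null frame, thereby trading it for second-curvature components $\n_t^{m-1}\Riem(e_i,X,\d_t,e_j)|_\H$ that Lemma \ref{le: curvature components} \eqref{claim: second curvature} controls up to order $m-1$. Where you genuinely diverge is the reduction that produces this component. The paper simply applies $\n_t^{m-1}$ to the second identity of Lemma \ref{eq: first higher derivative identities}, $\n_t\L_tg(X,Y)=-A^2(X,Y)-A^2(Y,X)+2\Riem(\d_t,X,\d_t,Y)$, absorbs the resulting $\n_t^iA*\n_t^jA$ terms with claim \eqref{claim: A}, and converts $\n_t^m\L_tg$ into $\L_t^{m+1}g$ with claim \eqref{claim: Lie}; this part of the paper's proof never uses that $W$ is Killing. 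You instead exploit the Killing structure from the outset, via $\L_t^{m+1}g(X,V)|_\H=\d_t^{m+1}\left(g(X,W)\right)|_\H=2\,\d_t^{m}\left(g(\n_tW,X)\right)|_\H$ and the Hessian identity $g(\n_t^2W,Y)=\Riem(\d_t,W,\d_t,Y)$. Both routes work; the paper's is shorter because it recycles lemmas already proved, whereas yours imports a standard identity the paper never states and needs a bit of extra care that you only gesture at: in your Leibniz expansion the insertions $\n_t^jX$ and $\n_t^bW$ are not tangent to $\H$ (e.g.\ $\n_tW|_\H=-\kappa\d_t|_\H$), so before claims \eqref{claim: A}, \eqref{claim: first curvature}, \eqref{claim: second curvature} can be invoked one must split these vectors into tangential parts plus multiples of $\d_t$ (all of which are determined by the data via $\Theta$ and its $\n_t$-derivatives) and use the Riemann antisymmetries to annihilate the repeated-$\d_t$ insertions. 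In compensation, your route makes transparent the point recorded in your second paragraph: the Killing equation alone only returns $\L_t^{m+1}g(X,V)$ to itself, and the infinite-order vanishing of the Ricci curvature is precisely what breaks that circle.
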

\begin{proof}
By Lemma \ref{eq: first higher derivative identities}, we have 
\[
	\nabla_t^m \L_tg(X,V)|_\H
		= \sum_{i + j = m-1} \n_t^i A * \n_t^j A(X,V)|_\H +2\n_t^{m-1}\Riem(\partial_t,X,\partial_t,V)|_\H
\]
for all $X \in T\H$.
The first term on the right hand side is dealt with in Lemma \ref{le: curvature components}.
The second term is computed as follows:
\begin{align*}
	0
		&= \n_t^{m-1} \Ric(X, \d_t)|_\H \\
		&= \n_t^{m-1} \tr_g\left(\Riem(\cdot, X, \d_t, \cdot)\right)|_\H \\
		&= \tr_g\left((\n_t^{m-1} \Riem)(\cdot, X, \d_t, \cdot)\right)|_\H \\
		&= \n_t^{m-1}\Riem(\d_t, X, \d_t, V)|_\H + \sum_{i,j = 2}^n g^{ij}\n_t^{m-1} \Riem(e_i, X, \d_t, e_j)|_\H.
\end{align*}
By Lemma \ref{le: curvature components}, the sum on the right hand side is given by \eqref{eq: Lie derivative XV}.
The proof is completed by applying Lemma \ref{le: curvature components}, claim \eqref{claim: Lie}.
\end{proof}

\subsubsection{Computation of the commutator}

In this section, we will compute the following commutator, which is essential for the remaining part of the proof of Theorem \ref{thm: uniqueness in gauge}:
\[
	[\n_t^{m-1}, \Box]\L_tg.
\]
\begin{prop} \label{prop: commutator}
Let Assumption \ref{ass: main} and Induction Assumption \ref{ass: induction} be satisfied for an $m \in \N$.
The expression
\[
	[\n_t^{m-1}, \Box]\L_tg|_\H + 2 (m-1) \kappa \L_t^{m+1}g|_\H
\]
is given uniquely in terms of 
\begin{equation}
	g|_\H, \hdots, \L_t^mg|_\H.  \label{eq: Lie derivatives commutator}
\end{equation}
\end{prop}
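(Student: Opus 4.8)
The plan is to expand the commutator $[\n_t^{m-1}, \Box]\L_tg$ schematically, isolate the single term that produces a factor proportional to $\n_t^{m+1}\L_tg$ (equivalently $\L_t^{m+2}g$, up to lower order), and show that all other terms are controlled by Lemma \ref{le: curvature components} and Proposition \ref{prop: the V components}, i.e.\ expressible in $g|_\H, \hdots, \L_t^mg|_\H$. Writing $\Box = \tr_g \n^2$ acting on the $(0,2)$-tensor $\L_tg$, I would commute $\n_t^{m-1}$ past the two spatial derivatives in $\Box$ one at a time using Lemma \ref{le: commutator n t m}. Each commutation introduces curvature terms $\n_t^j\Riem$ and terms involving $\n_t^j A$, all of which Lemma \ref{le: curvature components} expresses in terms of \eqref{eq: Lie derivatives commutator}, plus genuinely higher-order pieces where $\n_t^{m-1}$ passes through cleanly.

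First I would use the defining relation $\Box S = \sum_{\a,\b} g^{\a\b}\n^2_{e_\a, e_\b}S$ and split off the $\d_t$-direction in the metric trace. The crucial observation is that the inverse metric $g^{\a\b}$ pairs $\d_t$ with $V$ (since $g(\d_t, V)|_\H = 1$ and $g(\d_t,\d_t)|_\H=0$), so the trace contains a term $2\n^2_{\d_t, V}$ — but more importantly the term $\n^2_{V,\d_t}$ combined with $\n_tW|_\H = -\kappa\d_t|_\H$ from \eqref{eq: nabla t W} is what generates the factor $\kappa$. Concretely, when I commute $\n_t^{m-1}$ through $\Box$ and collect the piece where both spatial derivatives land as $\n_t$-derivatives via the $\d_t$-$V$ pairing, the Killing identity $\n_tW|_\H = -\kappa\d_t|_\H$ converts a $\n_V\n_t^{m-1}\n_t(\cdot)$ into $-\kappa\,\n_t^{m+1}(\cdot)$-type contributions; iterating the single pairing across the $m-1$ commutators is precisely what produces the coefficient $2(m-1)\kappa$. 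I would therefore track this ``principal'' term carefully and verify the combinatorial factor $2(m-1)$, while bounding every remaining term.

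The main obstacle I expect is the bookkeeping of the principal term: I must show that the only contribution to $[\n_t^{m-1},\Box]\L_tg|_\H$ that is \emph{not} already captured by Lemma \ref{le: curvature components} is exactly $-2(m-1)\kappa\,\L_t^{m+1}g|_\H$, so that adding $2(m-1)\kappa\,\L_t^{m+1}g|_\H$ cancels it and leaves only controlled terms. This requires care because $\n_t^{m+1}\L_tg$ and $\L_t^{m+2}g$ differ, and because the $V$-direction contractions must be handled via Proposition \ref{prop: the V components} rather than Lemma \ref{le: curvature components} directly. Every auxiliary term arising from commutators of the form $\sum_{a+b}(\n^a\Riem)*(\n^{b}\L_tg)$ or $(\n^a\Theta)*(\n^{b}\L_tg)$ with total $\n_t$-order at most $m-1$ on the curvature and at most $m$ total Lie-derivative order is, by Lemma \ref{le: curvature components} claims \eqref{claim: A}--\eqref{claim: third curvature} together with \eqref{eq: covariant vs Lie}, expressible in \eqref{eq: Lie derivatives commutator}; the $V$-components of $\L_t^{m+1}g$ that appear are handled by Proposition \ref{prop: the V components}. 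The proof thus reduces to a schematic expansion followed by identifying and peeling off this single principal term.
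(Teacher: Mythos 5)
Your high-level skeleton (expand the commutator schematically, peel off a single principal term, control everything else via Lemma \ref{le: curvature components}) matches the paper's strategy, but the two concrete claims at the heart of your computation are wrong, and as written the argument cannot produce the cancellation the proposition asserts. First, you misidentify the order of the principal term: you propose to isolate a contribution proportional to $\n_t^{m+1}\L_tg$ (i.e.\ of $\L_t^{m+2}g$-type). No such term can occur: $[\n_t^{m-1},\Box]$ is a commutator of operators of orders $m-1$ and $2$, hence of order $m$. Concretely, $[\n_t^{m-1},\Box]\L_tg=\sum_{i+j=m-2}\n_t^i[\n_t,\Box]\n_t^j\L_tg$, and $[\n_t,\Box]$ is itself second order (its worst term is of the form $g(\L_tg,\n^2)$, cf.\ Lemma \ref{le: preliminary commutators}), so each summand carries at most $i+2+j=m$ derivatives of $\L_tg$. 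The term that must be isolated is $-2(m-1)\kappa\,\n_t^m\L_tg|_\H$, which equals $-2(m-1)\kappa\,\L_t^{m+1}g|_\H$ modulo terms controlled by Lemma \ref{le: curvature components} \eqref{claim: Lie}; if the leftover were genuinely of $\L_t^{m+2}g$-type, adding $2(m-1)\kappa\,\L_t^{m+1}g|_\H$ could not cancel it and the proposition would simply be false.

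Second, the mechanism you propose for generating the factor $\kappa$ is not valid, and it is in any case the mechanism for the \emph{other} term in the splitting of $\n_t^{m-1}\Box\L_tg$, not for the commutator. The identity $\n_tW|_\H=-\kappa\,\d_t|_\H$ (equivalently $\n_V\d_t|_\H=-\kappa\,\d_t|_\H$, since $[W,\d_t]=0$) lets one replace differentiation along the \emph{vector} $\n_V\d_t$ by $-\kappa\n_t$, i.e.\ $\n_{\n_V\d_t}S|_\H=-\kappa\,\n_tS|_\H$; it does \emph{not} convert $\n_V\n_t^{m}(\cdot)$ into $-\kappa\,\n_t^{m+1}(\cdot)$. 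Terms of the form $\n_W\n_t^m\L_tg|_\H$ are instead annihilated using the Killing symmetry ($\L_W$ kills everything built from $g$ and $\d_t$); this, together with the $\d_t$--$V$ trace pairing you describe, is precisely how the paper treats the non-commutator term $\Box\n_t^{m-1}\L_tg$ in Lemma \ref{le: wave operator on derivative}. For the commutator itself, $\kappa$ enters by a different route, which is missing from your proposal: each $[\n_t,\n]$ contributes $-\n_{\Theta(\cdot)}$ with $\Theta(\cdot)=\n_\cdot\d_t$; under the trace these assemble into the operator $g(\L_tg,\n^2)$ of Lemma \ref{le: preliminary commutators}; and at the horizon the only nonvanishing raised components of $\L_tg$ are the tangential ones and $\L_tg(V,V)|_\H=-2\kappa$ (Proposition \ref{prop: first derivative}), which, since the inverse metric pairs $V$ with $\d_t$, multiplies $\n^2_{t,t}$ and yields $-2\kappa\,\n_t^m\L_tg|_\H$ per summand. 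The coefficient $m-1$ is then the number of summands $i+j=m-2$ in which no further commutator acts, not an ``iteration of the pairing across the commutators.'' Without the chain $\Theta$-terms $\to g(\L_tg,\n^2)\to\L_tg(V,V)|_\H=-2\kappa$, your expansion has no source for the claimed principal term.
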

We begin with the following lemma:
\begin{lemma} \label{le: preliminary commutators}
For any tensor field $S$, we schematically have
\begin{align*}
	[\nabla_t,\nabla]S(X)
         &= \Riem(\d_t,X)S - \n_{\Theta(X)}S, \\
     [\nabla_t, \n^2]S
     	&= \n \Riem * S + \Theta * \Riem * S + \Riem * \n S + \n \Theta * \n S + \Theta * \n^2 S, \\
     [\n_t, \Box]S
     	&= g(\L_tg, \n^2)S + \n \Ric(\d_t, \cdot) * S + \Riem * \n S + \n \Theta * \n S.
\end{align*}
\end{lemma}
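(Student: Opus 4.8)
The plan is to prove Lemma \ref{le: preliminary commutators} by computing each of the three commutators in turn, building each one on the previous. The first identity is essentially the definition of the curvature operator acting on tensors, together with the geometric gauge: starting from $[\nabla_t, \nabla]S(X) = \nabla^2_{t,X}S - \nabla_X(\nabla_t S)$, I would expand using $\nabla^2_{X, Y} = \nabla_X \nabla_Y - \nabla_{\nabla_X Y}$ and the curvature relation $\nabla^2_{t, X} - \nabla^2_{X, t} = \Riem(\d_t, X)$. The crucial simplification is that $\nabla_t \d_t = 0$ (the defining geodesic property from Proposition \ref{prop: the d_t vector field}), which kills the term that would otherwise involve $\nabla_{\nabla_t X}$; the remaining correction is exactly $-\nabla_{\nabla_X \d_t} S = -\nabla_{\Theta(X)}S$, using the notation $\Theta(X) = \nabla_X \d_t$. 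This reproduces the first displayed identity.

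For the second identity, the plan is to iterate: $[\nabla_t, \nabla^2]S$ means applying $\nabla_t$ past two covariant derivatives. I would write $\nabla_t \nabla^2 S = [\nabla_t, \nabla]\nabla S + \nabla [\nabla_t, \nabla]S + \nabla^2 \nabla_t S$ (schematically commuting $\nabla_t$ through one $\nabla$ at a time), and then substitute the first identity into each inner commutator. Each occurrence of $\Riem(\d_t, \cdot)S$ produces either a $\Riem * \nabla S$ or, after the outer $\nabla$ hits the curvature, a $\nabla \Riem * S$ term; each occurrence of $\nabla_{\Theta(\cdot)}S$ produces $\Theta * \nabla^2 S$ directly and $\nabla \Theta * \nabla S$ when differentiated, together with a $\Theta * \Riem * S$ term coming from the interaction of $\Theta$ with the curvature in the inner commutator. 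Collecting by total derivative count yields precisely the five schematic terms listed, where all contractions are with respect to $g$ (consistent with the $*$-notation) and I am using $A = g(\Theta(\cdot), \cdot)$ so that $\Theta$ and $A$ are interchangeable up to metric contraction.

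The third identity specializes the second to the Laplacian $\Box = \tr_g \nabla^2$. The key structural point is that taking the metric trace of the five terms in $[\nabla_t, \nabla^2]S$ must be consolidated into the four listed terms. The leading term $\Theta * \nabla^2 S$, once traced, is what becomes $g(\L_t g, \nabla^2)S$: here I would use the identity $\L_t g(X, Y) = A(X, Y) + A(Y, X) = g(\Theta(X), Y) + g(\Theta(Y), X)$ from Lemma \ref{eq: first higher derivative identities}, so that the trace of $\Theta$ against the two lower $\nabla^2$ slots is exactly the contraction of $\L_t g$ with $\Hess S$. The $\nabla \Riem * S$ term becomes the Ricci-type term $\nabla \Ric(\d_t, \cdot) * S$ upon tracing the curvature over its appropriate pair of indices (this is where the trace collapses a full curvature derivative into a Ricci derivative), and the remaining $\Riem * \nabla S$ and $\nabla \Theta * \nabla S$ terms survive the trace unchanged in schematic form. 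The $\Theta * \Riem * S$ term gets absorbed into the lower-order schematic families.

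The main obstacle I anticipate is purely bookkeeping rather than conceptual: keeping careful track of \emph{which} index slots are contracted when passing from $[\nabla_t, \nabla^2]$ to the traced version, so that the curvature contraction genuinely produces a Ricci (and hence, under Assumption \ref{ass: main}, a term whose $\nabla_t$-derivatives are controlled) rather than a general curvature term. The schematic $*$-notation is designed precisely to suppress this bookkeeping, so the real content is verifying that no term with too many derivatives on $S$ or with an uncontracted full curvature tensor is left over — in particular that the highest-derivative term is exactly $g(\L_t g, \nabla^2)S$ and not something involving $\nabla^3 S$. Since $\nabla_t$ commutes with $\nabla^2$ only up to terms that are first order in $\Theta$ and zeroth order in curvature derivatives at leading order, this degree count goes through, and the identity follows.
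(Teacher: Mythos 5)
Your strategy is the same as the paper's: the first identity from the definition of the second covariant derivative and of curvature acting on tensors, the second from the splitting $[\nabla_t,\nabla^2]S=[\nabla_t,\nabla]\nabla S+\nabla[\nabla_t,\nabla]S$ with the first identity substituted into both summands, and the third by taking the metric trace, with $\L_tg=A+A^{T}$ converting the traced $\Theta*\nabla^2S$ terms into $g(\L_tg,\nabla^2)S$. Two small inaccuracies in the first two steps do not affect anything: the first identity needs no property of $\d_t$ whatsoever (the geodesic equation $\nabla_t\d_t=0$ is only used for the \emph{higher} commutators in Lemma \ref{le: commutator n t m}), and the paper's convention is $\Box=-\tr_g(\nabla^2)$ rather than $+\tr_g(\nabla^2)$; with your sign the term $g(\L_tg,\nabla^2)S$ would appear with the opposite sign, which matters later when this term produces the coefficient $-2(m-1)\kappa$ in Proposition \ref{prop: commutator}.

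The genuine gap is in the third identity, exactly at the point you dismiss as bookkeeping. The trace produced by $\Box$ hits the term $(\nabla_X\Riem)(\d_t,Y)S$ in the slots $X$ and $Y$, i.e.\ it contracts the \emph{covariant-derivative index} against a \emph{curvature index}; what appears is the divergence $\sum_{\a,\b}g^{\a\b}(\nabla_{e_\a}\Riem)(\d_t,e_\b)$. No symmetry of the curvature tensor and no index bookkeeping turns a divergence of $\Riem$ into a derivative of $\Ric$: the required identity
\[
	\sum_{\a,\b}g^{\a\b}\,\nabla_{e_\a}\Riem(X,Y,\d_t,e_\b)
		= \nabla_X\Ric(\d_t,Y)-\nabla_Y\Ric(\d_t,X)
\]
is precisely the (contracted) second Bianchi identity, and this is the one nontrivial input in the paper's proof of the third formula. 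Your proposed mechanism, ``tracing the curvature over its appropriate pair of indices,'' would only be valid if the contraction were over two curvature slots, which it is not; as stated, that step fails.

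This omission is not cosmetic. The entire value of the lemma lies in the fact that the curvature-derivative term comes out as $\nabla\Ric(\d_t,\cdot)*S$ rather than a general $\nabla\Riem*S$: under Assumption \ref{ass: main} all covariant derivatives of $\Ric$ vanish along $\H$, so this term is discarded outright in the proof of Proposition \ref{prop: commutator}. A leftover divergence-of-$\Riem$ term would not vanish and would spoil the derivative count there, since Lemma \ref{le: curvature components} controls the components $\nabla_t^j\Riem(\d_t,\cdot,\d_t,\cdot)$ only up to $j\le k-2$. Adding the Bianchi-identity computation repairs the argument, after which the rest of your outline goes through as in the paper.
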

Here, we have used the notation
\[
	g(T,\nabla^2)S := \sum_{\a,\b,\gamma,\de = 0}^ng^{\alpha\beta}g^{\gamma\delta}T(e_\a, e_\b)\nabla^2_{e_\gamma, e_\delta}S.
\]
\begin{proof}
Note first that 
\begin{align*}
	[\nabla_t,\nabla]S(X)
		&= \nabla^2_{t,X}S-\nabla_X(\nabla_tS)\\
         &= \Riem(\d_t,X)S - \n_{\Theta(X)}S,
\end{align*}
which proves the first formula.
For the second formula, write
\begin{align*}
([\nabla_t,\nabla^2]S)(X,Y)=([\nabla_t,\nabla]\nabla S)(X,Y)+(\nabla[\nabla_t,\nabla]) S(X,Y).
\end{align*}
Inserting the above yields
\begin{align*}
	[\nabla_t,\nabla]\nabla S(X,Y)
		&= \Riem(\d_t,X) \n_YS - \n_{\Riem(\d_t,X)Y}S  - \n^2_{\Theta(X), Y}S,\\
	\nabla[\nabla_t,\nabla] S(X,Y)
		&= (\n_X \Riem)(\d_t, Y) S + \Riem(\Theta(X), Y) S + \Riem(\d_t, Y)\n_X S \\
		&\qquad - \n^2_{X, \Theta(Y)}S - \n_{(\n_X \Theta)(Y)}S,
\end{align*}
proving in particular the second formula.
We may now contract over $X$ and $Y$ to get	
\begin{align*}
	[\n_t, \Box]S
     	&= [\n_t, - \tr_g(\n^2)]S \\
     	&= - \tr_g([\n_t, \n^2])S \\
     	&= \tr_g\left( (\n_\cdot \Riem)(\d_t, \cdot) \right)S + \tr_g\left(\n^2_{\cdot, \Theta(\cdot)} + \n^2_{\Theta(\cdot), \cdot}\right)S \\
     	&\qquad + \Theta * \Riem * S + \Riem * \n S + \n \Theta * \n S.
\end{align*}
We therefore consider the endomorphism
\[
	\tr_g\left( (\n_\cdot \Riem)(\d_t, \cdot) \right)
\]
by using the second Bianchi identity s to write
\begin{align*}
	&g(\tr_g\left( (\n_\cdot \Riem)(\d_t, \cdot) \right)X, Y) \\
		&\quad = \sum_{\a, \b = 0}^n g^{\a \b} g( \n_{e_\a}\Riem(\d_t, e_\b) X, Y) \\
		&\quad = \sum_{\a, \b = 0}^n g^{\a \b} \n_{e_\a}\Riem(X, Y, \d_t, e_\b) \\
		&\quad = - \sum_{\a, \b = 0}^n g^{\a \b} \n_X \Riem(Y, e_\a, \d_t, e_\b) - \sum_{\a, \b = 0}^n g^{\a \b} \n_Y \Riem(e_\a, X,\d_t, e_\b) \\
		&\quad = \n_X \Ric(\d_t, Y) - \n_Y \Ric(\d_t, X).
\end{align*}
Finally, we use the formula
\[
	\L_tg(X, Y) = g(\Theta(X),Y)+g(X,\Theta(Y))
\]
to see that
\begin{align*}
	\tr_g\left(\n^2_{\cdot, \Theta(\cdot)} + \n^2_{\Theta(\cdot), \cdot}\right)S = g(\L_tg, \n^2)S.
\end{align*}
This completes the proof.
\end{proof}

\begin{proof}[Proof of Proposition \ref{prop: commutator}]
We first note that
\begin{align*}
	[\n_t^{m-1}, \Box] \L_tg|_\H
		&= \sum_{i + j = m-2} \n_t^i [\n_t, \Box] \n_t^j \L_tg|_\H.
\end{align*}
By applying Lemma \ref{le: curvature components}, Lemma \ref{le: preliminary commutators} and Assumption \ref{ass: main}, we note that the only term in this sum which is not determined by \eqref{eq: Lie derivatives commutator} is
\[
	\sum_{i + j = m-2} \n_t^i g(\L_tg, \n^2) \n_t^j \L_tg|_\H 
		= \sum_{i + j = m-2} \sum_{a + b = i} c(a, b) g(\n_t^a\L_tg, \n_t^b \n^2) \n_t^j \L_tg|_\H,
\]
for some combinatorial numbers $c(a, b)$.
The only term in this sum which is not determined by \eqref{eq: Lie derivatives commutator} is
\begin{align*}
	\sum_{i + j = m-2} g(\L_tg, \n_t^i \n^2) \n_t^j \L_tg|_\H
		&= \sum_{i + j = m-2} g(\L_tg, [\n_t^i, \n^2]) \n_t^j \L_tg|_\H \\*
		&\qquad + \sum_{i + j = m-2}g(\L_tg, \n^2) \n_t^i \n_t^j \L_tg|_\H \\
		&=  \sum_{i + j = m-2} \sum_{a + l = i - 1} g(\L_tg, \n_t^a [\n_t, \n^2]) \n_t^{l + j} \L_tg|_\H \\
		&\qquad + (m-1) g(\L_tg, \n^2) \n_t^{m-2} \L_tg|_\H.
\end{align*}
Again, Lemma \ref{le: preliminary commutators} implies that the only term which is not determined by \eqref{eq: Lie derivatives commutator} is
\[
	(m-1) g(\L_tg, \n^2) \n_t^{m-2} \L_tg|_\H,
		= (m-1) g^{\a \gamma} g^{\b \de} \L_tg_{\a \b} \n_{e_\de, e_\gamma}^2 \n_t^{m-2} \L_tg|_\H.
\]
By Proposition \ref{prop: first derivative} and Lemma \ref{le: preliminary commutators}, the only term in this expression which is not given by \eqref{eq: Lie derivatives commutator}, is the term
\[
	- 2 (m-1) \kappa \n_t^m\L_tg|_\H.
\]
Applying Lemma \ref{le: curvature components}, claim \eqref{claim: Lie}, completes the proof.
\end{proof}

\subsubsection{Expressions for the $V^\perp$-components}

We now turn to the computation of the final components
\begin{equation} \label{eq: V perp components}
	\L^{m+1}_t g(X, Y)|_\H,
\end{equation}
where $X, Y \in V^\perp$.
This will use the linearization of the Ricci curvature $\L_t\Ric|_\H$, found for example in \cite{Besse1987}*{Thm.\ 1.174}: 
\begin{equation} \label{eq: linearized Ricci curvature}
	2 \L_t \Ric
		= \Box_L \L_t g + \L_{\div(\L_tg - \frac12\tr_g(\L_tg)g)^{\sharp}}g,
\end{equation}
where $\Box_L$ is the d'Alembert-Lichnerowicz operator, defined as
\[
	\Box_Lh := \Box h -2 \mathring \Riem h,
\]
where 
\[
	\mathring \Riem h(X, Y) := \tr_g\left(h(\Riem(\cdot, X)Y, \cdot)\right).
\]

\begin{prop} \label{prop: Lie components V perp}
Let Assumption \ref{ass: main} and Induction Assumption \ref{ass: induction} be satisfied for an $m \in \N$.
For $X,Y \in C^{\infty}(V^\perp)$, the expression
\begin{align*}
	\L_t^{m+1}g(X, Y)|_\H
\end{align*}
is given uniquely in terms of 
\begin{equation}
	g|_\H, \hdots, \L_t^mg|_\H \quad \text{and} \quad \L_t^{m+1}g(\cdot, V)|_\H.  \label{eq: Lie derivatives lin Ricci}
\end{equation}
\end{prop}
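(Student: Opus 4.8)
The plan is to extract $\L_t^{m+1}g(X,Y)|_\H$, for $X,Y\in V^\perp$, from the $\n_t^{m-1}$-derivative of the linearized Ricci identity \eqref{eq: linearized Ricci curvature}, evaluated at $\H$. First I would apply $\n_t^{m-1}$ to $2\L_t\Ric = \Box_L\L_t g + \L_\xi g$, with $\xi=\div(\L_t g-\tfrac12\tr_g(\L_t g)g)^\sharp$, and restrict the arguments to $X,Y\in V^\perp$. The left-hand side vanishes at $\H$: since $\n^k\Ric|_\H=0$ for all $k$ by Assumption \ref{ass: main}, and every Lie derivative may be traded for a covariant derivative plus a contraction with $\Theta$ as in \eqref{eq: covariant vs Lie}, the quantity $\n_t^{m-1}\L_t\Ric|_\H$ becomes a sum of terms each carrying a factor $\n_t^j\Ric|_\H=0$.

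Next I would show that every term on the right-hand side except one is controlled by the data \eqref{eq: Lie derivatives lin Ricci}. The curvature term $-2\n_t^{m-1}(\mathring\Riem\,\L_t g)|_\H$ expands by Leibniz into $\sum_{a+b=m-1}\n_t^a\Riem*\n_t^b\L_t g|_\H$, and since $b\le m-1$, Lemma \ref{le: curvature components} together with Induction Assumption \ref{ass: induction} express it through $g|_\H,\dots,\L_t^m g|_\H$. For the gauge term $\n_t^{m-1}\L_\xi g(X,Y)|_\H$ I would use Lemma \ref{le: d t components} (so that $\L_t g(\d_t,\cdot)=0$) together with the explicit inverse metric at $\H$, where $g^{tt}=0$ and $g(\d_t,V)=1$: the only $t$-raising contraction in $\div$ and $\tr_g$ pairs $\d_t$ with $V$, so the top-order content of $\L_\xi g(X,Y)$ is carried by $\L_t^{m+1}g(\cdot,V)|_\H$, which is handled by Proposition \ref{prop: the V components}, while the $\tr_g(\L_t g)$-Hessian piece only ever receives $m-1$ factors of $\n_t$ and is therefore of order $\le m$.

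The heart of the argument is the principal term. I would write $\n_t^{m-1}\Box\L_t g=\Box\,\n_t^{m-1}\L_t g+[\n_t^{m-1},\Box]\L_t g$ and invoke Proposition \ref{prop: commutator}, which contributes $-2(m-1)\kappa\,\L_t^{m+1}g|_\H$ modulo controlled terms. For $\Box\,\n_t^{m-1}\L_t g|_\H$ I would use that $g^{tt}=0$ at $\H$, so $\Box=-\bigl(2\n^2_{\d_t,V}+\s^{ij}\n^2_{e_i,e_j}\bigr)$ there; the tangential part raises the $t$-order by at most the $m-1$ derivatives already present and is controlled, whereas the mixed term $-2\n^2_{\d_t,V}\n_t^{m-1}\L_t g$ produces, via $\n_t V|_\H=-\kappa\d_t$ from \eqref{eq: nabla t W}, a contribution $-2\kappa\,\L_t^{m+1}g|_\H$ together with a term $-2\n_V\L_t^{m+1}g|_\H$. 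The decisive observation is that this last, apparently higher-order, term is in fact \emph{algebraic} in the unknown: since $[W,\d_t]=0$ and $W$ is Killing, $\L_V\L_t^{m+1}g=0$, so for $X,Y\in V^\perp$ one has $\n_V\L_t^{m+1}g(X,Y)=\L_t^{m+1}g(\on_X V,Y)+\L_t^{m+1}g(X,\on_Y V)$ modulo $\d_t$- and $V$-components (the former killed by Lemma \ref{le: d t components}, the latter controlled by Proposition \ref{prop: the V components}), and $\on_X V\in V^\perp$.

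Collecting everything, the $(X,Y)$-component reduces to a pointwise linear equation
\[
\bigl(-2m\kappa\,\id-2B\bigr)\L_t^{m+1}g\big|_\H(X,Y)=R(X,Y),
\]
where $R$ is determined by \eqref{eq: Lie derivatives lin Ricci} and $B$ is the derivation $B(T)(X,Y):=T(\on_X V,Y)+T(X,\on_Y V)$ acting on symmetric $2$-tensors over $V^\perp$. The main obstacle is to see that this operator is invertible, and here I would use that $V$ is $\s$-Killing, so $X\mapsto\on_X V$ is skew-symmetric on $(V^\perp,\s)$; hence its eigenvalues, and therefore those of the derivation $B$ on $\Sym^2(V^\perp)$, are purely imaginary, while $-2m\kappa\neq 0$ because $m\ge 1$ and $\kappa>0$. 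Thus $-2m\kappa\,\id-2B$ has spectrum in $-2m\kappa+i\R$, is invertible, and solving for $\L_t^{m+1}g(X,Y)|_\H$ expresses it uniquely through \eqref{eq: Lie derivatives lin Ricci}. The remaining delicate bookkeeping — that all the commutators and the gauge term genuinely stay within the claimed data — is routine given Lemma \ref{le: curvature components}, Lemma \ref{le: commutator n t m} and Proposition \ref{prop: the V components}.
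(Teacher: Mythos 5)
Your overall route is the paper's own: apply $\n_t^{m-1}$ to the linearized Ricci identity \eqref{eq: linearized Ricci curvature}, kill the left-hand side via Assumption \ref{ass: main}, control the commutator by Proposition \ref{prop: commutator} and the gauge term through the $V$-components, and solve the resulting linear equation for $\L_t^{m+1}g(X,Y)|_\H$. But two of your top-order counts are wrong. First, the curvature term is \emph{not} expressible through $g|_\H,\hdots,\L_t^mg|_\H$. In the Leibniz expansion $\n_t^{m-1}(\mathring\Riem\,\L_tg)=\sum_{a+b=m-1}\n_t^a\Riem*\n_t^b\L_tg$, the term with $a=m-1$ contains, because the inverse metric at $\H$ pairs $\d_t$ with $V$, the contraction $\n_t^{m-1}\Riem(\d_t,X,Y,\d_t)\,\L_tg(V,V)|_\H$, and $\L_tg(V,V)|_\H=-2\kappa\neq0$. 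By Lemma \ref{eq: first higher derivative identities}, $\Riem(\d_t,X,\d_t,Y)=\n_tA(X,Y)+A^2(X,Y)$, so $\n_t^{m-1}\Riem(\d_t,\cdot,\d_t,\cdot)|_\H$ contains $\n_t^mA|_\H$, which up to controlled terms is $\tfrac12\L_t^{m+1}g|_\H$; this is also why Lemma \ref{le: curvature components}, claim \eqref{claim: first curvature}, only reaches $j\le k-2$ with data up to $\L_t^kg|_\H$. This is exactly the content of the paper's Lemma \ref{le: derivative curvature action}: the curvature term contributes $-2\kappa\,\L_t^{m+1}g|_\H$ to the equation, not something controlled.

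Second, your operator $B$ never arises. The dangerous piece of the wave term is $-2\n_W\n_t^m\L_tg(X,Y)|_\H$, and since $\L_W$ commutes with $\n_t$ and $\L_t$ and $\L_Wg=0$, one has $\n_WT(X,Y)=-T(\n_XW,Y)-T(X,\n_YW)$ for $T=\n_t^m\L_tg$, where $\n$ is the \emph{spacetime} connection. By \eqref{eq: def omega}, $\n_XW|_\H=\o(X)V$, and $\o(X)=\s(X,V)/\sqrt{\s(V,V)}=0$ for $X\in V^\perp$, so this term vanishes identically (this is the paper's Lemma \ref{le: wave operator on derivative}). Your substitute formula $\n_V\L_t^{m+1}g(X,Y)=\L_t^{m+1}g(\on_XV,Y)+\L_t^{m+1}g(X,\on_YV)+\cdots$ conflates $\n_XW|_\H$ with the $\s$-covariant derivative $\on_XV$; these are different objects: the first is parallel to $V$ (and zero on $V^\perp$), while the second lies in $V^\perp$ and is generically nonzero (e.g.\ for Kerr). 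Consequently your final equation $(-2m\kappa\,\id-2B)h=R$ has both the wrong scalar — the correct bookkeeping gives $-2(m-1)\kappa$ from the commutator, $-2\kappa$ from the wave term, and $-2\kappa$ from the curvature term, i.e.\ $-2(m+1)\kappa$ — and a spurious operator $B$; your spectral-inversion argument, though correct as linear algebra, is solving an equation that never occurs. The conclusion of unique solvability survives only because the two errors happen to compensate into something invertible, so as written the proof has a genuine gap in the top-order bookkeeping.
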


The idea in the proof is to differentiate \eqref{eq: linearized Ricci curvature}:
\begin{align}
	2 \n_t^{m-1}\L_t\Ric|_\H
		&= \n_t^{m-1} \left( \Box \L_tg -2 \mathring \Riem \L_tg + \L_{\div((\L_tg - \frac12\tr_g(\L_tg)g)^{\sharp}}g\right)|_\H \nonumber \\
		&= [\n_t^{m-1}, \Box] \L_tg|_\H + \Box \n_t^{m-1} \L_tg|_\H -2 \n_t^{m-1} \mathring \Riem \L_tg|_\H \nonumber \\
		&\qquad + \n_t^{m-1} \L_{\div((\L_tg - \frac12\tr_g(\L_tg)g)^{\sharp}}g|_\H. \label{eq: der lin Ricci}
\end{align}
By Proposition \ref{prop: commutator}, we know that 
\[
	[\n_t^{m-1}, \Box]\L_tg|_\H + 2 (m-1) \kappa \L_t^{m+1}g|_\H
\]
is given by \eqref{eq: Lie derivatives lin Ricci}.
The remaining three terms in \eqref{eq: der lin Ricci} are treated separately in Lemma \ref{le: wave operator on derivative}, Lemma \ref{le: derivative curvature action} and Lemma \ref{le: time derivative gauge term} below.

\begin{lemma}\label{le: wave operator on derivative}
For $X,Y \in C^{\infty}(V^\perp)$, the expression
\[
	\Box\nabla_t^{m-1}\L_tg(X,Y)|_\H + 2\kappa\L^{m+1}_tg(X,Y)|_\H,
\]
is uniquely determined by \eqref{eq: Lie derivatives lin Ricci}.
\end{lemma}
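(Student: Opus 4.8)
The plan is to work in the null-adapted frame $\{\d_t, V, e_2, \dots, e_n\}$ along $\H$, where $e_2, \dots, e_n$ span $V^\perp$. By Remark \ref{rmk: g at H} the nonzero frame components of $g$ at $\H$ are $g(\d_t, V) = 1$ and $g(e_i, e_j) = \s_{ij}$, so the inverse metric at $\H$ satisfies $g^{\d_t V} = 1$, $g^{ij} = \s^{ij}$ and all other components vanish; in particular the lightlike component $g^{\d_t \d_t}|_\H = 0$. Writing $S := \nabla_t^{m-1}\L_tg$ and expanding $\Box = -g^{\alpha\beta}\nabla^2_{\alpha\beta}$ in this frame therefore gives, at $\H$,
\[
	\Box S|_\H = -\nabla^2_{\d_t, V}S|_\H - \nabla^2_{V, \d_t}S|_\H - \s^{ij}\nabla^2_{e_i, e_j}S|_\H.
\]
The decisive point is that, since $g^{\d_t \d_t}|_\H = 0$, no purely transversal Hessian $\nabla_t^2 S$ survives, so the top-order term $\L_t^{m+1}g$ must be extracted from the two mixed terms via their connection corrections.

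First I would isolate the leading term. By \eqref{eq: nabla t W} we have $\nabla_t V|_\H = -\kappa \d_t$, and since $[W, \d_t] = 0$ by Proposition \ref{prop: the d_t vector field} we also get $\nabla_V \d_t|_\H = \nabla_t W|_\H = -\kappa \d_t$. Expanding $\nabla^2_{\d_t, V}S = \nabla_t \nabla_V S - \nabla_{\nabla_t V}S$ and $\nabla^2_{V, \d_t}S = \nabla_V \nabla_t S - \nabla_{\nabla_V \d_t}S$, the connection corrections each contribute $-\kappa \nabla_t S|_\H$. Using that $[\d_t, V]|_\H = [\d_t, W]|_\H = 0$, so that $[\nabla_t, \nabla_V]S|_\H = \Riem(\d_t, V)S|_\H$ is pure curvature, I obtain
\[
	\Box S|_\H + 2\kappa \L_t^{m+1}g|_\H = -2\nabla_V \nabla_t^m \L_tg|_\H - \Riem(\d_t, V)S|_\H - \s^{ij}\nabla^2_{e_i, e_j}S|_\H + (\text{l.o.}),
\]
where I used $\nabla_t S = \nabla_t^m \L_tg = \L_t^{m+1}g + (\text{l.o.})$ from Lemma \ref{le: curvature components}, claim \eqref{claim: Lie}, to convert the $-2\kappa \nabla_t S$ term. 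The remaining task is to show each term on the right is controlled by \eqref{eq: Lie derivatives lin Ricci}. The curvature term $\Riem(\d_t, V)S|_\H$ acts algebraically and is handled by Lemma \ref{le: curvature components} together with $S|_\H = \L_t^m g|_\H + (\text{l.o.})$. For the tangential Hessian, the only danger is the connection term $\nabla_{\nabla_{e_i}e_j}S$; but $g(\nabla_{e_i}e_j, V)|_\H = -g(e_j, \nabla_{e_i}V)|_\H = -\omega(e_i)\,g(e_j, V)|_\H = 0$ by \eqref{eq: def omega}, so $\nabla_{e_i}e_j$ is tangent to $\H$ at $\H$ and no transversal derivative of $S$ appears; the whole term is thus a tangential operator applied to $S|_\H$, hence determined by $g|_\H, \dots, \L_t^m g|_\H$.

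The main obstacle is the term $\nabla_V \nabla_t^m \L_tg|_\H$, since $V$ is tangent to $\H$ and differentiating the top-order tensor $\nabla_t^m \L_tg|_\H = \L_t^{m+1}g|_\H + (\text{l.o.})$ in a horizon direction a priori escapes the allowed data (we control only $\L_t^{m+1}g(\cdot, V)|_\H$, not the $V^\perp$-$V^\perp$ components $\L_t^{m+1}g(X,Y)|_\H$, which are exactly what we are solving for). The Killing symmetry is what resolves this. Setting $T := \nabla_t^m \L_tg$ and using the torsion-free identity $\nabla_V T(X,Y) = \L_V T(X,Y) - T(\nabla_X V, Y) - T(X, \nabla_Y V)$ for $X, Y \in V^\perp$, the correction terms collapse by \eqref{eq: def omega}, since $\nabla_X V|_\H = \omega(X)V$, to $\omega(X)\,T(V, Y)|_\H + \omega(Y)\,T(X, V)|_\H$, i.e.\ to $V$-contracted components of $\nabla_t^m \L_tg$, which equal $\L_t^{m+1}g(\cdot, V)|_\H + (\text{l.o.})$ and therefore lie in \eqref{eq: Lie derivatives lin Ricci} (and are independently controlled by Proposition \ref{prop: the V components}). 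Finally, the genuinely dangerous piece $\L_V T(X,Y)|_\H$ vanishes: because $W$ is Killing one has $\L_W \nabla = 0$, and since $[W, \d_t] = 0$ the operator $\L_W$ commutes with both $\L_t$ and $\nabla_t$, whence $\L_W(\nabla_t^m \L_tg) = \nabla_t^m \L_t \L_W g = 0$; evaluating on $X, Y$ tangent to $\H$, where $W|_\H = V$, gives $\L_V T(X,Y)|_\H = 0$. Combining all contributions yields the claimed identity.
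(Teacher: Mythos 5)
Your proposal follows the same route as the paper's own proof: you expand $\Box$ in the null frame using $g^{\d_t\d_t}|_\H = 0$, extract the top-order piece $-2\kappa\n_t^m\L_tg|_\H$ from the connection corrections $\n_tW|_\H = \n_W\d_t|_\H = -\kappa\d_t|_\H$ and convert it via Lemma \ref{le: curvature components} \eqref{claim: Lie}, kill $\n_V\n_t^m\L_tg$ by the Killing symmetry ($\L_W\n = 0$, $[W,\d_t]=0$, so $\L_W\n_t^m\L_tg = \n_t^m\L_t\L_Wg = 0$), and control the tangential Hessian because $\n_{e_i}e_j|_\H \in T\H$. All of these steps are correct; in fact your correction terms $\o(X)\,T(V,Y)|_\H + \o(Y)\,T(X,V)|_\H$ vanish identically, since $\o(X) = \s(X,V)/\kappa = 0$ for $X \in V^\perp$, which is exactly what makes the paper's shorter computation of that term legitimate.

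The genuine gap is in your one-line treatment of the curvature term $\Riem(\d_t,V)S|_\H$. Expanding the contraction at $\H$ in the null frame produces the term $\Riem(\d_t,V,X,\d_t)\,S(V,Y)|_\H$, whose coefficient has two null slots. For $m \geq 2$ your citation works: Lemma \ref{le: curvature components} \eqref{claim: first curvature} with $k=m$ and $j = 0 \leq m-2$ determines $\Riem(\d_t,\cdot,\d_t,\cdot)|_\H$, and claim \eqref{claim: Lie} determines $S|_\H$. But for $m=1$ --- the base case of the whole induction --- claim \eqref{claim: first curvature} is vacuous (the range $j = 0,\hdots,k-2$ is empty), and this is not a technicality: by Lemma \ref{eq: first higher derivative identities}, the components $\Riem(\d_t,\cdot,\d_t,\cdot)|_\H$ are, up to terms determined by $g|_\H$ and $\L_tg|_\H$, equal to $\tfrac12\n_t\L_tg|_\H$, i.e.\ they contain second-order transversal derivatives of $g$ which at this stage are exactly what one is solving for. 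So Lemma \ref{le: curvature components}, as cited, cannot handle this term when $m=1$, and an unjustified top-order contribution could in principle alter the claimed coefficient $2\kappa$. The claim is nevertheless true, and there are two quick repairs: (i) the paper's argument, namely that for $m=1$ the dangerous coefficient is multiplied by $S(V,Y)|_\H = \L_tg(V,Y)|_\H = 0$ (Proposition \ref{prop: first derivative}) and $S(\d_t,\cdot)|_\H = 0$ (Lemma \ref{le: d t components}), so only components of type $\Riem(X,e_i,\d_t,V)$ survive, which claim \eqref{claim: second curvature} does give at $k=1$; or (ii) observing that the dangerous coefficient equals $-\Riem(\d_t,X,\d_t,V)|_\H$, which by Lemma \ref{eq: first higher derivative identities} involves only the $V$-contracted component $\L_t^2g(X,V)|_\H$, and that component is part of the allowed data \eqref{eq: Lie derivatives lin Ricci}. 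Either observation closes the gap, but one of them is needed.
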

\begin{proof}
We compute that
\begin{align*}
	\Box \n_t^{m-1} \L_tg|_\H
		&= - \sum_{\a,\b = 2}^n g^{\a\b}\n^2_{e_\a, e_\b} \n_t^{m-1} \L_tg|_\H \\
		&= - \n^2_{W, t} \n^{m-1}_t \L_tg|_\H - \n^2_{t, W}\n_t^{m-1} \L_tg|_\H \\
		& \qquad - \sum_{i,j = 2}^n g^{ij}\n^2_{e_i, e_j} \n^{m-1}_t \L_tg|_\H \\
		&= - 2 \n_W \n^m_t \L_tg|_\H + 2 \n_{\n_W\d_t}\n_t^{m-1} \L_tg|_\H \\
		&\qquad - \Riem(\d_t, W)\n_t^{m-1} \L_tg|_\H - \sum_{i,j = 2}^n g^{ij}\n_{e_i} \n_{e_j} \n^{m-1}_t \L_tg|_\H \\
		&\qquad + \sum_{i,j = 2}^n g^{ij}\n_{\n_{e_i}e_j} \n^{m-1}_t \L_tg|_\H.
\end{align*}
By Lemma \ref{le: curvature components} \eqref{claim: Lie} and since $\n_{e_i}e_j|_\H \in T\H$, the fourth and fifth term are uniquely given by \eqref{eq: Lie derivatives lin Ricci}.
We compute the second term using
\[
	\n_W\d_t|_\H = - \kappa \d_t|_\H,
\]
by Proposition \ref{prop: first derivative}, and get
\begin{align*}
	2 \n_{\n_W\d_t}\n_t^{m-1} \L_tg|_\H 
		&= - 2 \kappa \n_t^m \L_tg|_\H.
\end{align*}
Applying Lemma \ref{le: curvature components} \eqref{claim: Lie}, we conclude that the second term uniquely given by \eqref{eq: Lie derivatives lin Ricci}.
The first term is computed using that $W$ is a Killing vector field with $[\d_t, W] = 0$.
For $X, Y \in V^\perp$, we have
\begin{align*}
	- 2 \n_W 
		&\n^{m+1}_t \L_tg(X, Y)|_\H \\
		&= - 2 \L_W\n^{m+1}_t \L_tg(X, Y)|_\H + 2 \n^{m+1}_t \L_tg(\n_XW, Y)|_\H \\
		&\qquad + 2 \n^{m+1}_t \L_tg(X, \n_YW)|_\H \\
		&= - 2 \n^{m+1}_t \L_t\L_Wg(X, Y)|_\H \\
		&= 0.
\end{align*}
We finally consider the last term.
Note first that for $m > 1$, Lemma \ref{le: curvature components} implies that $\Riem|_\H$ is determined by \eqref{eq: Lie derivatives lin Ricci}.
By Lemma \ref{le: curvature components} \eqref{claim: Lie}, we conclude that in this case, the last term is determined by \eqref{eq: Lie derivatives lin Ricci}.
In case $m = 1$, Proposition \ref{prop: first derivative} implies that for any $X, Y \in V^\perp$, we have
\begin{align*}
	- \Riem(\d_t, W)\L_tg(X, Y)|_\H
		&= \L_tg(\Riem(\d_t, W) X, Y)|_\H + \L_tg(X, \Riem(\d_t, W) Y)|_\H \\
		&= \sum_{i,j=2}^n\Riem(\d_t, W, X, e_i)g^{ij}\L_tg(e_j, Y)|_\H \\*
		&\qquad + \sum_{i,j = 2}^n \Riem(\d_t, W, X, e_i)g^{ij} \L_tg(X, e_j)|_\H.
\end{align*}
These curvature components are determined by \eqref{eq: Lie derivatives lin Ricci} and by Lemma \ref{le: curvature components} \eqref{claim: second curvature}.
Taken together, this completes the proof.
\end{proof}

\begin{lemma}\label{le: derivative curvature action}
For all $X, Y \in V^\perp$, the expression
\[
	\nabla_t^{m-1}(\mathring \Riem \L_tg)(X,Y)|_\H
		- \kappa\L_t^{m+1}g(X,Y)|_\H
\]
is given uniquely by \eqref{eq: Lie derivatives lin Ricci}.
\end{lemma}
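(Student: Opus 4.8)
The plan is to expand $\n_t^{m-1}(\mathring\Riem \L_tg)$ by the Leibniz rule, isolate the single term in which all $m-1$ derivatives fall on the curvature factor, and show that precisely this term produces $\kappa\L_t^{m+1}g(X,Y)$ modulo data controlled by \eqref{eq: Lie derivatives lin Ricci}, while every other term is already subsumed into \eqref{eq: Lie derivatives lin Ricci}. First I would write $\mathring\Riem\L_tg = \Riem * \L_tg$ schematically; since the contraction is with respect to the parallel metric $g$, it commutes with $\n_t$ and gives $\n_t^{m-1}(\mathring\Riem\L_tg) = \sum_{i+j=m-1}\binom{m-1}{i}\n_t^i\Riem * \n_t^j\L_tg$, with the exact contraction pattern of $\mathring\Riem$ preserved. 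For every term with $i\le m-2$ (hence $j\ge1$), the factor $\n_t^j\L_tg|_\H$ is controlled by $g|_\H,\hdots,\L_t^mg|_\H$ via \eqref{eq: covariant vs Lie} and Lemma \ref{le: curvature components}\eqref{claim: Lie}, while $\n_t^i\Riem|_\H$ (its worst component being $\Riem(\d_t,\cdot,\d_t,\cdot)$) is controlled by $g|_\H,\hdots,\L_t^{i+2}g|_\H$, and $i+2\le m$, so it lies in \eqref{eq: Lie derivatives lin Ricci} by Lemma \ref{le: curvature components}\eqref{claim: first curvature}.

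The only remaining term is $i=m-1$, $j=0$, namely $\mathring{(\n_t^{m-1}\Riem)}\L_tg(X,Y) = \sum_{\a,\b} g^{\a\b}\,\L_tg\big((\n_t^{m-1}\Riem)(e_\a,X)Y,e_\b\big)$. I would evaluate this trace in the basis $\d_t, W, e_2,\hdots,e_n$ with $e_i\in V^\perp$, whose only nonzero inverse-metric entries at $\H$ are $g^{\d_t W}=g^{W\d_t}=1$ and $g^{ij}$ for $i,j\ge2$. By Lemma \ref{le: d t components} the $(W,\d_t)$ contribution $\L_tg(\,\cdot\,,\d_t)$ vanishes, and by Proposition \ref{prop: first derivative} the tangential part $\sum_{ij}g^{ij}\L_tg\big((\n_t^{m-1}\Riem)(e_i,X)Y,e_j\big)$ only sees the fully tangential components $\n_t^{m-1}\Riem(e_i,X,Y,e_k)|_\H$, which are lower order by Lemma \ref{le: curvature components}\eqref{claim: third curvature}. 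This leaves exactly the $(\d_t,W)$ contribution $\L_tg\big((\n_t^{m-1}\Riem)(\d_t,X)Y,W\big)$.

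The crux is this last term. Using $\L_tg(\,\cdot\,,W)|_\H=\L_tg(\,\cdot\,,V)|_\H$ together with Proposition \ref{prop: first derivative} (so that $\L_tg(V,V)=-2\kappa$, $\L_tg(V,\,\cdot\,)|_{V^\perp}=0$, and $\L_tg(\d_t,\,\cdot\,)=0$), the pairing extracts $-2\kappa$ times the $V$-component $g\big((\n_t^{m-1}\Riem)(\d_t,X)Y,\d_t\big)=\n_t^{m-1}\Riem(\d_t,X,Y,\d_t)$; by the algebraic symmetries of the differentiated curvature tensor this equals $2\kappa\,\n_t^{m-1}\Riem(\d_t,X,\d_t,Y)$. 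Finally, differentiating $\Riem(\d_t,X,\d_t,Y)=\n_tA(X,Y)+A^2(X,Y)$ from Lemma \ref{eq: first higher derivative identities} $m-1$ times, and using the third identity of Lemma \ref{eq: first higher derivative identities} with claims \eqref{claim: A} and \eqref{claim: Lie} to write $\n_t^m A|_\H=\tfrac12\L_t^{m+1}g|_\H+(\text{lower})$, I obtain $\n_t^{m-1}\Riem(\d_t,X,\d_t,Y)|_\H=\tfrac12\L_t^{m+1}g(X,Y)|_\H+(\text{lower})$. Hence this term equals $\kappa\L_t^{m+1}g(X,Y)|_\H$ plus data in \eqref{eq: Lie derivatives lin Ricci}, and subtracting $\kappa\L_t^{m+1}g(X,Y)$ yields the claim.

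The main obstacle is the coefficient bookkeeping: one must check that the factor $2\kappa$ coming from $\L_tg(V,V)=-2\kappa$ and the curvature symmetry cancels exactly against the $\tfrac12$ coming from $A$ being the symmetrized half of $\L_tg$, so that the coefficient of the top-order term $\L_t^{m+1}g(X,Y)$ is precisely $\kappa$ rather than some other multiple of $\kappa$; a sign slip or a missing factor here would destroy the cancellation with Lemma \ref{le: wave operator on derivative} and Proposition \ref{prop: commutator} that ultimately isolates $\L_t^{m+1}g$ in \eqref{eq: der lin Ricci}.
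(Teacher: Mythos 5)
Your proposal is correct and follows essentially the same route as the paper's proof: Leibniz expansion of $\n_t^{m-1}(\mathring\Riem\,\L_tg)$, isolation of the single term where all derivatives fall on the curvature, evaluation of the trace at $\H$ using $\L_tg(\d_t,\cdot)=0$, $\L_tg(V,\cdot)|_{V^\perp}=0$ and $\L_tg(V,V)=-2\kappa$ so that only the $\n_t^{m-1}\Riem(\d_t,X,Y,\d_t)$ component and a fully tangential (lower-order) piece survive, and then conversion of $\n_t^{m-1}\Riem(\d_t,X,\d_t,Y)$ into $\tfrac12\L_t^{m+1}g(X,Y)$ plus terms controlled by \eqref{eq: Lie derivatives lin Ricci} via Lemma \ref{eq: first higher derivative identities} and Lemma \ref{le: curvature components}. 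Your coefficient check (the $-2\kappa$ from $\L_tg(V,V)$, the sign from the curvature antisymmetry, and the $\tfrac12$ from $A$ combining to exactly $\kappa$) is precisely the computation the paper performs.
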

\begin{proof}
We compute
\begin{align*}
	&\nabla_t^{m-1}\mathring \Riem \L_tg(X,Y)|_\H \\
		&= \sum_{\a,\b,\gamma,\de = 0}^n \sum_{k=0}^{m-1}\binom{m-1}{k}g^{\alpha\beta}g^{\gamma\delta}\nabla_t^k \Riem (e_{\alpha},X,Y,e_{\gamma})\nabla_{t}^{m-1-k}\L_tg(e_{\beta},e_{\delta})|_\H \\
		&=\nabla_t^{m-1} \Riem (\partial_t,X,Y,\partial_t)\L_tg(V,V)|_\H \\*
		&\qquad + \sum_{i,j = 2}^n \sum_{a,b = 2}^n g^{ij}g^{ab}\nabla_t^{m-1} \Riem(e_i,X,Y,e_a)\L_tg(e_j,e_b)|_\H \\*
		&\qquad + \sum_{k=0}^{m-2}\nabla_t^k \Riem * \nabla_t^{m-1-k}\L_tg(X, Y)|_\H.
\end{align*}
By Lemma \ref{le: curvature components}, the second and the third terms are given by \eqref{eq: Lie derivatives lin Ricci}.
The first term is computed using that $\L_tg(V,V)|_\H = -2\kappa$, by Proposition \ref{prop: first derivative}, and Lemma \ref{eq: first higher derivative identities}:
\begin{align*}
	\nabla_t^{m-1} \Riem (\partial_t,X,Y,\partial_t)\L_tg(V,V)|_\H
		&= \kappa \n_t^m\L_tg(X, Y)|_\H \\*
		&\qquad + \sum_{k = 0}^{m-1} \n_t^k A * \n_t^{m - 1 - k} A(X, Y)|_\H.
\end{align*}
By Lemma \ref{le: curvature components} \eqref{claim: A}, the sum is given uniquely by \eqref{eq: Lie derivatives lin Ricci}.
The proof is hence finished by applying Lemma \ref{le: curvature components} \eqref{claim: Lie}.
\end{proof}

\begin{lemma}\label{le: time derivative gauge term}
For any $X, Y \in T\H$, we have
	\[
		\nabla^{m-1}_t \L_{\div(\L_tg - \frac12\tr_g(\L_tg)g)^{\sharp}}g(X,Y)|_\H 
	\]
is given uniquely by \eqref{eq: Lie derivatives lin Ricci}.
\end{lemma}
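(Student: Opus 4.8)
The goal is to show that the ``gauge term'' $\nabla^{m-1}_t \L_{\div(\L_tg - \frac12\tr_g(\L_tg)g)^{\sharp}}g(X,Y)|_\H$ is determined by the data in \eqref{eq: Lie derivatives lin Ricci}. The crucial structural observation is that this term involves a \emph{Lie derivative} along the vector field $Z := \div(\L_tg - \frac12\tr_g(\L_tg)g)^{\sharp}$, and by Lemma \ref{le: d t components} the $\d_t$-components of every $\L_t^k g$ vanish. The plan is therefore to expand $\nabla_t^{m-1}$ using the Leibniz rule, reduce everything to Lie derivatives of $g$ and covariant derivatives of $Z$, and then show that the only potentially dangerous contribution --- one that would produce $\L_t^{m+1}g$ --- in fact drops out because of the vanishing $\d_t$-components.

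\textbf{Step 1: unwind the gauge one-form.} First I would rewrite the $(0,1)$-tensor $P := \div(\L_tg - \frac12\tr_g(\L_tg)g)$ so that its $\nabla_t$-derivatives are manageable. Since a divergence is a metric trace of a covariant derivative, $P = \tr_g \nabla(\L_tg - \frac12\tr_g(\L_tg)g)$, and so schematically $P = \nabla * \L_t g$. Applying $\nabla_t^{m-1}$ and commuting derivatives past the trace (using Lemma \ref{le: commutator n t m} to control the commutators $[\nabla_t^j,\nabla]$), each $\nabla_t^{m-1}P|_\H$ becomes a sum of terms of the form $\nabla^a \nabla_t^b \L_tg|_\H$ with $b \le m-1$. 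By Lemma \ref{le: curvature components}, claim \eqref{claim: Lie}, every $\nabla_t^b \L_tg|_\H$ with $b \le m-1$ is determined by $g|_\H, \hdots, \L_t^m g|_\H$; tangential covariant derivatives $\nabla^a$ along $\H$ then keep us inside \eqref{eq: Lie derivatives lin Ricci}. The key point is that $P$ itself involves \emph{only} $m-1$ time derivatives of $\L_t g$, so $Z = P^\sharp$ and all its relevant derivatives are already controlled by the data.

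\textbf{Step 2: expand the Lie derivative and isolate the top-order term.} Writing $\L_Z g(X,Y) = Z g(X,Y) - g([Z,X],Y) - g(X,[Z,Y])$ and distributing $\nabla_t^{m-1}$ by Leibniz, the only way to produce a term with $m+1$ time derivatives of $g$ would be to let all $m-1$ derivatives land on $g$ and have $Z$ itself already carry top order. But by Step 1, $Z$ carries at most $m-1$ time derivatives, so no term reaches $\L_t^{m+1}g$; every summand has total time-derivative order at most $m$. More carefully, I would convert the appearance of $Z = P^\sharp$ with a contraction against $g$, so that the undifferentiated $Z$-slot is paired with $\d_t$ or with $V^\perp$ directions. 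Here Lemma \ref{le: d t components} is decisive: any contraction of $\L_t^k g$ against $\d_t$ vanishes, which kills precisely the would-be highest-order terms and confines everything to \eqref{eq: Lie derivatives lin Ricci}.

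\textbf{The main obstacle.} The delicate part is the bookkeeping in Step 2: a naive Leibniz expansion of $\nabla_t^{m-1}\L_Z g$ produces many terms, and one must verify that \emph{no} term secretly carries $m+1$ time derivatives on $g$ --- in contrast to Lemma \ref{le: wave operator on derivative} and Lemma \ref{le: derivative curvature action}, where such top-order terms genuinely appear and must be matched against $\kappa\L_t^{m+1}g$. The mechanism that prevents this here is that the vector field $Z$ is built from a \emph{divergence} of $\L_t g$, which costs a spatial derivative rather than a $\d_t$-derivative, so its order is strictly below top; combined with the vanishing $\d_t$-components from Lemma \ref{le: d t components}, this guarantees the gauge term contributes no $\L_t^{m+1}g$ at all. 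I would therefore end by noting that, unlike the previous two lemmas, the gauge term carries \emph{no} multiple of $\L_t^{m+1}g$, and hence is outright determined by $g|_\H, \hdots, \L_t^m g|_\H$.
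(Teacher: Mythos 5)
Your proposal contains a genuine error, and it is located exactly at the step you flag as ``the main obstacle.'' You claim that because $Z = \div(\L_tg - \frac12\tr_g(\L_tg)g)^{\sharp}$ is built from a \emph{divergence}, it ``costs a spatial derivative rather than a $\d_t$-derivative,'' so that $Z$ carries at most $m-1$ time derivatives and the gauge term contains no top-order contribution at all, being determined by $g|_\H, \hdots, \L_t^m g|_\H$ alone. This is false. The divergence is a \emph{metric} trace, and at the horizon the inverse metric in the frame $\{\d_t, V, e_2, \hdots, e_n\}$ has the cross pairing coming from $g(\d_t, V) = 1$: for one-forms $T, S$ one has $g^{\a\b}T_\a S_\b = T(\d_t)S(V) + T(V)S(\d_t) + \sum_{i,j}\s^{ij}T(e_i)S(e_j)$. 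Consequently $\div(\L_tg)(\cdot)$ contains the term $\n_t\L_tg(V, \cdot)$, i.e.\ the trace \emph{does} spend a $\d_t$-derivative, paired with a $V$-argument. After applying $\n_t^{m-1}$ and expanding exactly as you propose, one therefore encounters terms of the schematic form $\n_X \n_t^m \L_tg(V, Y)|_\H$, which modulo lower-order corrections (Lemma \ref{le: curvature components}, claim \eqref{claim: Lie}) are tangential derivatives of $\L_t^{m+1}g(\cdot, V)|_\H$. These do \emph{not} vanish and are \emph{not} determined by $g|_\H, \hdots, \L_t^m g|_\H$; Lemma \ref{le: d t components} cannot kill them, since it annihilates $\d_t$-argument components, not $V$-argument components --- you conflate the two. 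Your stronger conclusion (``the gauge term carries no multiple of $\L_t^{m+1}g$'') would, if true, make it mysterious why the data \eqref{eq: Lie derivatives lin Ricci} is stated as $g|_\H, \hdots, \L_t^mg|_\H$ \emph{together with} $\L_t^{m+1}g(\cdot, V)|_\H$: the whole point of the lemma is that the gauge term is allowed to (and does) depend on the top-order $V$-components, which were determined beforehand in Proposition \ref{prop: the V components}.

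The paper's actual proof makes this precise: one writes $\n_t^{m-1}\L_Zg(X,Y)|_\H = g(\n^m_{t,\hdots,t,X}Z, Y)|_\H + g(X, \n^m_{t,\hdots,t,Y}Z)|_\H$, substitutes the divergence, and decomposes the trace in the null frame. This produces three kinds of terms: (i) $\n^{m+1}_{t,\hdots,t,X,t}(\L_tg)(V,Y)|_\H$, which is split into a commutator $[\n_t^{m-1},\n]\n_t\L_tg(X,V,Y)|_\H$ (controlled by Lemma \ref{le: commutator n t m} and Lemma \ref{le: curvature components}) plus $\n_X\n_t^m\L_tg(V,Y)|_\H$, the latter being precisely a tangential derivative of data included in \eqref{eq: Lie derivatives lin Ricci}; (ii) terms whose $\L_tg$-argument is $\d_t$, which are lower order by claim \eqref{claim: Lie} and Lemma \ref{le: d t components}; and (iii) terms whose last two derivative directions are tangent to $\H$, which are handled by the double-commutator identity $\n^{m+1}_{t,\hdots,t,\cdot,\cdot}\L_tg = [[\n_t^{m-1},\n],\n]\L_tg + 2\n[\n_t^{m-1},\n]\L_tg + \n^2\n_t^{m-1}\L_tg$. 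Your Step 1 and your use of the commutator lemmas are in the right spirit for items (ii) and (iii), but without item (i) --- which your mechanism explicitly denies --- the proof does not close.
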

\begin{proof}
We have
\begin{align}
	\nabla^{m-1}_t 
		&\L_{\div(\L_tg - \frac12\tr_g(\L_tg)g)^{\sharp}}g(X,Y)|_\H \nonumber \\
		&= g(\nabla^m_{t, \hdots, t, X} \div(\L_tg - \frac12\tr_g(\L_tg)g)^{\sharp},Y)|_\H \nonumber \\ 
		&\qquad + g(X, \nabla^m_{t, \hdots, t, Y} \div(\L_tg - \frac12\tr_g(\L_tg)g)^{\sharp})|_\H \nonumber \\
		&= \sum_{\a, \b = 0}^n g^{\alpha\beta}(\nabla^{m+1}_{t,\ldots,t,X,e_{\alpha}}(\L_tg)(e_{\beta},Y)|_\H + \sum_{\a, \b = 0}^n g^{\alpha\beta}\nabla^{m+1}_{t,\ldots, t,Y,e_{\alpha}}(\L_tg)(e_{\beta},X))|_\H \nonumber \\*
		&\qquad - \sum_{\a, \b = 0}^n g^{\alpha\beta}\nabla^{m+1}_{t,\ldots,t,X,Y}(\L_tg)(e_{\alpha},e_{\beta})|_\H \nonumber \\
		&= \nabla^{m+1}_{t,\ldots,t,X,t}(\L_tg)(V,Y)|_\H + \nabla^{m+1}_{t,\ldots,t,Y,t}(\L_tg)(V,X)|_\H \nonumber \\*
		&\qquad + \nabla^{m+1}_{t,\ldots,t,X,V}(\L_tg)(\d_t,Y)|_\H + \nabla^{m+1}_{t,\ldots,t,Y,V}(\L_tg)(\d_t,X)|_\H \nonumber \\*
	&\qquad + \sum_{i, j = 2}^n g^{ij} \nabla^{m+1}_{t,\ldots,t,X,e_{i}}(\L_tg)(e_{j},Y)|_\H + \sum_{i, j = 2}^n g^{ij} \nabla^{m+1}_{t,\ldots,t,Y,e_{i}}(\L_tg)(e_{j},X)|_\H \nonumber \\*
	&\qquad - \sum_{\a, \b = 0}^n g^{\a\b}\nabla^{m+1}_{t,\ldots,t,X,Y}(\L_tg)(e_\a,e_\b)|_\H. \label{eq: derivative of Lie part}
\end{align}
We begin by noting that 
\begin{equation} \label{eq: VY component}
	\nabla^{m+1}_{t,\ldots,t,X,t}(\L_tg)(V,Y)|_\H
		= [\n_t^{m-1}, \n]\n_t\L_tg(X, V, Y)|_\H + \n_X \n_t^m\L_tg(V, Y)|_\H.
\end{equation}
Since $X \in T\H$, the second term in \eqref{eq: VY component} is given by \eqref{eq: Lie derivatives lin Ricci}.
We compute the first term in \eqref{eq: VY component} as follows:
\begin{align*}
	[\n_t^{m-1}, \n]\n_t \L_tg(X, V, Y)|_\H
		&= \sum_{i + j = m-2} \n_t^i [\n_t, \n] \n_t^{j + 1} \L_tg(X, V, Y)|_\H \\
		&= \sum_{i + j = m-2} \n_t^i \left(\Riem(\d_t, \cdot) - \n_{\Theta(X)}\right) \n_t^{j + 1} \L_tg(X, V, Y)|_\H \\
		&= \sum_{i + j = m - 2} \n_t^i \Riem * \n_t^{j + 1}\L_tg(X, Y)|_\H \\
		&\qquad + \sum_{i + j = m-2} \sum_{a = 1}^i \n_t^a \Theta * \n_t^{i - a} \n \n_t^{j + 1}\L_tg(X, Y)|_\H \\
		&\qquad - \sum_{i + j = m - 2} \n_{\underbrace{t, \hdots, t}_{i \text{ times}}, \Theta(X), \underbrace{t, \hdots, t}_{j + 1 \text{ times}}}^m\L_tg(V, Y)|_\H.
\end{align*}
Applying Lemma \ref{le: curvature components}, note that only the last term is not obviously given by \eqref{eq: Lie derivatives lin Ricci}, we compute it separately:
\begin{align*}
	&\sum_{i + j = m - 2} \n_{\underbrace{t, \hdots, t}_{i \text{ times}}, \Theta(X), \underbrace{t, \hdots, t}_{j + 1 \text{ times}}}^m\L_tg(V, Y)|_\H \\
		& \quad = \sum_{i + j = m - 2} [\n^i_t, \n] \n^{j+1}_t \L_tg(\Theta(X), V, Y)|_\H + (m - 1) \n^m_{\Theta(X), t, \hdots, t} \L_tg(V, Y)|_\H.
\end{align*}
Arguing as above for $[\n_t^i, \n]$, we see that the sum is given by \eqref{eq: Lie derivatives lin Ricci}.
Since $X \in C^\infty(V^\perp)$, we know that $\Theta(X) \in C^\infty(T\H)$, which implies that the second term in the above expression is given by \eqref{eq: Lie derivatives lin Ricci}.
Thus we have proven that the first and second terms in \eqref{eq: derivative of Lie part} are given by \eqref{eq: Lie derivatives lin Ricci}.

We turn to the remaining terms in \eqref{eq: derivative of Lie part}.
Note first that
\begin{align*}
	\n_{t, \hdots, t, \cdot, \cdot}^{m+1} \L_tg
		&= [\n_t^{m-1}, \n^2]\L_tg + \n^2 \n_t^{m-1}\L_tg \\
		&= [\n_t^{m-1}, \n] \n \L_tg + \n [\n_t^{m-1}, \n] \L_tg + \n^2 \n_t^{m-1}\L_tg \\
		&= [[\n_t^{m-1}, \n], \n] \L_tg + 2 \n [\n_t^{m-1}, \n] \L_tg + \n^2 \n_t^{m-1}\L_tg.
\end{align*}
Analogous to above, one checks that $[\n_t^{m-1}, \n] \L_tg$ and $\n_t^{m-1}\L_tg$ are given by \eqref{eq: Lie derivatives lin Ricci}.
Hence for $X, Y \in C^\infty(T\H)$, we have $\n_X Y \in T\H$ and we conclude that
\[
	2 \n_X [\n_t^{m-1}, \n] \L_tg(Y)|_\H + \n^2_{X, Y} \n_t^{m-1}\L_tg|_\H
\]
is given by \eqref{eq: Lie derivatives lin Ricci}.
We also note that
\begin{align*}
	[[\n_t^{m-1}, \n], \n]\L_tg
		&= \sum_{i+j=m-2} [\n_t^i \Riem * \n_t^j, \n] \L_tg + \sum_{i+j=m-2} [\n_t^i \Theta * \n^{j + 1}_{t, \hdots, t, \cdot} , \n]\L_tg.
\end{align*}
Applying Lemma \ref{le: curvature components} gives, by the same arguments as above, that also this is given by \eqref{eq: Lie derivatives lin Ricci}.
We conclude that for any $X, Y \in C^\infty(T\H)$
\[
	\n_{t, \hdots, t, X, Y}\L_tg|_\H
\]
is given by \eqref{eq: Lie derivatives lin Ricci}.
This completes the proof.
\end{proof}

We may now prove Proposition \ref{prop: Lie components V perp}:

\begin{proof}[Proof of Proposition \ref{prop: Lie components V perp}]
Combining equation \eqref{eq: der lin Ricci} with Proposition \ref{prop: commutator}, Lemma \ref{le: wave operator on derivative}, Lemma \ref{le: derivative curvature action} and Lemma \ref{le: time derivative gauge term}, we conclude that for all $X, Y \in V^\perp$
\[
	2 \n_t^{m-1}\L_t\Ric(X, Y)|_\H + 2 (m + 1) \kappa \L_t^{m+1}g(X, Y)|_\H
\]
is given uniquely by \eqref{eq: Lie derivatives lin Ricci}.
Since by Assumption \ref{ass: main}, we have
\[
	2 \n_t^{m-1}\L_t\Ric(X, Y)|_\H = 0
\]
and $\kappa \neq 0$, we conclude that 
\[
	\L_t^{m+1}g(X, Y)|_\H
\]
is given uniquely by \eqref{eq: Lie derivatives lin Ricci}.
This completes the proof.
\end{proof}

We may finally conclude the proof of Theorem \ref{thm: uniqueness in gauge}:

\begin{proof}[Proof of Theorem \ref{thm: uniqueness in gauge}]
As we pointed out in Remark \ref{rmk: m = 1 induction}, Proposition \ref{prop: first derivative} implies that the assertion in Induction Assumption \ref{ass: induction} for
\[
	m = 1.
\]
We thus assume that Induction Assumption \ref{ass: induction} is true for a fixed
\[
	m \in \N
\]
and aim to prove the assertion in Induction Assumption \ref{ass: induction} for
\[
	m+1.
\]
By Proposition \ref{prop: the V components}, we know that there is a unique way to express the components
\[
	\L_t^{m+1}g(X, V)|_\H
\]
in terms of $g|_\H, \hdots, \L_t^mg|_\H$.
Since by Induction Assumption \ref{ass: induction}, we know that 
\begin{align*}
	\L_t^kg(X, Y)|_\H
		&= Q_k(\s, V)(X, Y)
\end{align*}
for all $k = 0, \hdots, m$ and $X, Y \in T\H$, and since Lemma \ref{le: d t components} implies that
\[
	\L_t^kg(\d_t, \cdot) |_\H = 0
\]
for all $k \in \N$, we conclude that there is a unique $Q_{1,m+1}(\s, V)$ such that
\[
	\L_t^{m+1}g(V, X)|_\H = Q_{1, m+1}(\s, V)(V, X)
\]
for all $X \in T\H$.
Similarly, by Proposition \ref{prop: Lie components V perp}, we conclude that there is a $Q_{2,m+1}(\s, V)$ such that
\[
	\L_t^{m+1}g(X, Y)|_\H = Q_{2, m+1}(\s, V)(X, Y)
\]
for all $X, Y \in V^\perp$.
To sum up, we have shown that there is a unique $Q_{m+1}(\s, V)$ such that
\[
	\L_t^{m+1}g(X, Y)|_\H 
		= Q_{m+1}(\s, V)(X, Y)
\]
for all $X, Y \in T\H$.
Finally, note that $Q_{m+1}$ constructed this way is diffeomorphism invariant.
This completes the induction argument and finishes the proof.
\end{proof}

\subsection{Proof of geometric uniqueness} \label{subsec: uniqueness}

The goal of this section is to prove Theorem \ref{thm: main uniqueness formal}.
We are given two smooth vacuum solutions $M$ and $\hat M$ with embeddings of $\H$
\begin{center}
\begin{tikzcd}
\H \arrow[hookrightarrow]{dr}{\hat \iota} \arrow[hookrightarrow]{r}{\iota} & M \\
 & \hat M
\end{tikzcd}
\end{center}
as non-degenerate Killing horizons with horizon Killing vector fields $W$ and $\hat W$, respectively, such that $(\H, \s, V)$ is the induced data in both cases.\footnote{In the previous subsections we for simplicity wrote $\H \subset M$, but here we emphasize the embedding, since we study two solutions when proving Theorem \ref{thm: main uniqueness formal}.}
We want to construct open neighborhoods
\[
	\iota(\H) \subset \U \subset M
\]
and 
\[
	\hat \iota (\H) \subset \hat \U \subset \hat M
\]
and an diffeomorphism
\[
	\Phi : (\U, g|_\U) \to  (\hat \U, \hat g|_{\hat \U})
\]
such that
\begin{align*}
	\Phi \circ \iota 
		&= \hat \iota, \\
	\Phi^* \hat W 
		&= W,
\end{align*}
and with
\begin{align*}
	\n^k \left( \Phi^*\hat g - g\right)|_{\i(\H)}
		&= 0
\end{align*}
for all $k \in \N_0$.
The question is how $\Phi$ should be constructed away from $\iota(\H)$.
If $\iota(\H) \subset M$ was a spacelike or timelike hypersurface, a natural way to construct $\Phi$ near $\iota(\H)$ would be to map the unit speed geodesics normal to $\iota(\H)$ to the unit speed geodesics normal to $\hat \iota (\H)$.
Since $\iota(\H) \subset M$ is a \emph{lightlike} hypersurface, the normal vector field is \emph{tangent} to $\iota(\H)$, so that method does not work.
However, recall from Proposition \ref{prop: the d_t vector field} that there is a \emph{geometrically canonical} \emph{transversal} vector field $\d_t|_{\iota(\H)}$ along $\iota(\H)$ and $\d_{\hat t}|_{\hat \iota(\H)}$ along $\hat \iota(\H)$, which plays an analogous role as a unit normal does to a timelike or spacelike hypersurface.
This idea together with the iterative determination of the asymptotic expansion in the null time function, Theorem \ref{thm: uniqueness in gauge}, are the keys in our proof of Theorem \ref{thm: main uniqueness formal}.

We let $\d_t$ and $\d_{\hat t}$ denote the smooth vector fields given by Proposition \ref{prop: the d_t vector field}, defined in open neighborhoods 
\[
	\iota(\H) \subset \U \subset M \quad \text{and} \quad \hat \iota (\H) \subset \hat \U \subset \hat M,
\]
respectively.

\begin{prop}[The candidate isometry] \label{prop: candidate isometry}
Assume the same as in Theorem \ref{thm: main uniqueness formal}.
Shrinking $\U$ and $\hat \U$ if necessary, there is a unique diffeomorphism
\[
	\Phi: \U \to \hat \U
\]
such that
\begin{align}
	\Phi|_{\iota(\H)} 
		&= \hat \iota \circ \iota^{-1}|_{\iota(\H)} , \label{eq: Phi restricted varphi}\\
	\md \Phi(\d_t)
		&= \d_{\hat t}. \label{eq: geodesics}
\end{align}
Moreover, for this diffeomorphism, we have 
\begin{equation} \label{eq: Killing vector field preserved}
	\Phi^* \hat W
		= W.
\end{equation}
\end{prop}

\begin{proof}
Let $q \in \U$ be given.
By Proposition \ref{prop: the d_t vector field}, there is a unique integral curve of $\d_t$, starting at some point $p \in \iota(\H)$ and reaching $q$ after eigentime $t(q)$.
Let $\hat p := \hat \iota \circ \iota^{-1}(p) \in \hat \iota(\H)$ and consider the integral curve of $\d_{\hat t}$ in $\hat \U$.
Let $\hat q \in \hat \U$ denote point which the integral curve of $\d_{\hat t}$ reaches after time $t(q)$, i.e.\ the unique point on the integral curve starting in $\hat p$ with 
\[
	\hat t(\hat q) = t(q).
\]
In order for this to be defined, we shrink $\U$ if necessary, i.e.\ assume that $q$ is sufficiently near $\iota(\H)$. 
For each $q \in \U$, set 
\[
	\Phi(q) := \hat q.
\]
It follows that $\Phi$ is smooth and we may shrink $\hat \U$ to make sure that $\Phi$ is bijective.
The inverse is also smooth, since it is constructed the same way.
The properties \eqref{eq: Phi restricted varphi} and \eqref{eq: geodesics} follow by construction.
Finally, \eqref{eq: Killing vector field preserved} is now immediate from \eqref{eq: dt and W commuting} and the properties \eqref{eq: Phi restricted varphi} and \eqref{eq: geodesics}.
\end{proof}

We may finally prove Theorem \ref{thm: main uniqueness formal}, which says that $\Phi$ is an isometry to infinite order at the horizon and an isometry in an open neighborhood if the metrics are real analytic.

\begin{proof}[Proof of Theorem \ref{thm: main uniqueness formal}]
Let $\Phi$ be as in Proposition~\ref{prop: candidate isometry}.
We claim that 
\begin{equation} \label{eq: asymptotic uniqueness}
	\L_t^m \left( \Phi^* \hat g - g \right)|_{\i(\H)} = 0
\end{equation}
for all $m \in \N_0$.
By real analyticity, this will suffice to prove the assertion.
We begin with $m = 0$.
Note that
\begin{align*}
	\md \Phi(\d_t)
		&= \d_{\hat t}, \\
	\md \Phi(\md \iota(V))
		&= \md \hat \iota (V)
\end{align*}
and 
\begin{equation} \label{eq: m=0 check}
	\md \Phi|_{\md \iota \left( V^\perp \right)} 
		= \md \hat \iota \circ (\md \iota)^{-1}|_{\md \iota \left( V^\perp \right)} : \md \iota \left( V^\perp \right) \to \md \hat \iota \left( V^\perp \right)
\end{equation}
is an isomorphism.
What remains is therefore to check that \eqref{eq: m=0 check} is an isometry.
Given $X, Y \in V^\perp$, we compute
\begin{align*}
	\Phi^* \hat g(\md \iota(X), \md \iota(Y))
		&= \hat g(\md \Phi \circ \md \iota(X), \md \Phi \circ \md \iota(Y)) \\
		&= \hat g(\md \hat \iota (X), \md \hat \iota (Y)) \\
		&= \s(X, Y) \\
		&= g(\md \iota(X), \md \iota(Y)).
\end{align*}
This completes the proof of \eqref{eq: asymptotic uniqueness} when $m = 0$.
Let us now turn to the case $m \geq 1$.
Proposition \ref{prop: candidate isometry} implies that 
\[
	\L_t \Phi^* = \Phi^* \L_{\md \Phi(\d_t)} = \Phi^* \L_{\d_{\hat t}} = \Phi^* \L_{\hat t} 
\]
and hence
\[
	\L_t^m \Phi^* = \Phi^* \L_{\hat t}^m
\]
for all $m \in \N$.
The claim \eqref{eq: asymptotic uniqueness} can thus be rewritten as
\begin{equation} \label{eq: asymptotic uniqueness version}
	\Phi^*\L_{\hat t}^m  \hat g|_{\i(\H)} = \L_t^m g|_{\i(\H)}.
\end{equation}
We first prove that
\begin{equation} \label{eq: isometry d t}
	\Phi^*\L_{\hat t}^m  \hat g(\d_t, \cdot)|_{\i(\H)} = \L_t^m g(\d_t, \cdot)|_{\i(\H)}.
\end{equation}
The right hand side of this is vanishing by Lemma \ref{le: d t components}.
The left hand side is computed using Proposition \ref{prop: candidate isometry} as
\begin{align*}
	\Phi^*\L_{\hat t}^m  \hat g(\d_t, \cdot)|_{\i(\H)}
		&= \L_{\hat t}^m  \hat g(\md \Phi(\d_t), \md \Phi(\cdot))|_{\Phi (\i(\H))} \\
		&= \L_{\hat t}^m  \hat g(\d_{\hat t}, \md \Phi(\cdot))|_{\hat \i(\H)},
\end{align*}
which is vanishing as well by Lemma \ref{le: d t components}, proving \eqref{eq: isometry d t}.
We now prove that
\begin{equation} \label{eq: induced components}
	\iota^*\Phi^*\L_{\hat t}^m  \hat g
		= \iota^*\L_t^m g,
\end{equation}
which together with \eqref{eq: isometry d t} would prove \eqref{eq: asymptotic uniqueness version} and thus prove \eqref{eq: asymptotic uniqueness}.
By Proposition \ref{prop: candidate isometry} and Theorem \ref{thm: uniqueness in gauge}, we have
\begin{align*}
	\i^*\Phi^*\L_{\hat t}^m  \hat g
		&= (\Phi \circ \i)^*\L_{\hat t}^m  \hat g \\
		&= \hat \i^*\L_{\hat t}^m  \hat g \\
		&= Q_m(\s, V) \\
		&= \iota^* \L_t^mg,
\end{align*}
which completes the proof of \eqref{eq: induced components}.
This completes the proof of \eqref{eq: asymptotic uniqueness}.

If the spacetime metrics $g$ and $\hat g$ are real analytic, elliptic regularity theory implies that the Killing vector fields $W$ and $\hat W$ are real analytic (since $\L_Wg$ is the symmetric derivative, which is a PDE with injective principle symbol).
Hence the data $(\H, \s, V)$ are real analytic and it follows that the vector fields $\d_t$ and $\d_{\hat t}$ are real analytic and therefore the diffeomorphism $\Phi$ is real analytic with real analytic inverse.
Since
\begin{align*}
	\n^k \left( \Phi^*\hat g - g\right)|_{\i(\H)}
		&= 0
\end{align*}
for all $k \in \N_0$, real analyticity implies that 
\begin{align*}
	\Phi^*\hat g - g
		&= 0
\end{align*}
in an open neighborhood of $\iota(\H)$, which is the last assertion in Theorem \ref{thm: main uniqueness formal}.
\end{proof}

\section{Existence of the expansion} \label{sec: existence}

The goal of this section is to prove Theorem \ref{thm: main existence formal}.
The proof will be a combination of Theorem \ref{thm: main uniqueness formal} and an existence theorem by Moncrief in \cite{M1982}, which we recall here in the form we need it.
We use a slightly different notation than in \cite{M1982} to better match the notation of the present paper:
\begin{thm}[A version of Moncrief's existence theorem] \label{thm: Moncrief}
Let $\V \subset \R^2$ be an open region with coordinates $x_2, x_3$.
Given a smooth function $\mathring \phi$, a smooth one-form $\mathring \b$ and a smooth Riemannian metric $\mathring h$ on $\V$, there is a smooth spacetime metric of the form
\begin{equation} \label{eq: Moncrief gauge}
	g 
		:= e^{-2\phi} \left( - \frac{N^2 \md \tau^2}{4 \tau} + \sum_{a,b = 2}^3 h_{ab} \md x^a \md x^b\right) + \tau e^{2\phi} \left( \frac1{2\tau} \md \tau + \frac12 \md x^1 + \sum_{b = 2}^3 \b_a \md x^a \right)^2,
\end{equation}
on 
\[
	M := (-\e, \e)_\tau \times S^1_{x_1} \times \V_{x_2, x_3},
\]
such that
\[
	\n^k \Ric_g|_{\tau = 0} = 0
\]
for all $k \in \N_0$, where
\[
	N := \frac{e^{2 \mathring \phi}}{\sqrt{\det(\mathring h_{ij})}}\sqrt{\det(h_{ij})} 
\]
and where $\phi, \b$ and $h$ are a smooth function, one-form and Riemannian metric, respectively, such that
\begin{align*}
	\d_{x_1} \phi 
		&= 0, \quad \L_{\d_{x_1}} \b = 0, \quad \L_{\d_{x_1}} h = 0,	\\
	\phi|_{\tau = 0, x_1 = c}
		&= \mathring \phi, \quad \b|_{\tau = 0, x_1 = c} = \mathring \b, \quad h|_{\tau = 0, x_1 = c} = \mathring h,
\end{align*}
for any $c \in S^1$.
Moreover, the hypersurface
\[
	\{\tau = 0\} \times S^1_{x_1} \times \V_{x_2, x_3}
\]
is a smooth non-degenerate Killing horizon.
In addition, if the data $\mathring \phi$, $\mathring \b$ and $\mathring h$ are real analytic, then $g$ is real analytic and 
\[
	\Ric_g = 0
\]
in $M$.
\end{thm}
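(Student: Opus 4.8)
The plan is to split the argument into an analytic core, which I would reduce to Moncrief's original theorem in \cite{M1982}, and a geometric verification that the resulting metric has the claimed horizon structure and is non-degenerate. The only genuinely hard input is the analytic core, where one solves a Cauchy--Kowalevski problem with coefficients that are singular at $\tau = 0$.

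First I would substitute the ansatz \eqref{eq: Moncrief gauge} into $\Ric(g) = 0$. The hypotheses $\L_{\d_{x_1}}\phi = \L_{\d_{x_1}}\b = \L_{\d_{x_1}} h = 0$ make $\d_{x_1}$ a Killing vector field, so (away from $\tau = 0$, where $\d_{x_1}$ is non-null) the usual Kaluza--Klein reduction along the Killing orbit turns the $4$-dimensional vacuum system into a coupled system on the $3$-dimensional quotient with coordinates $(\tau, x_2, x_3)$: the reduced $3$-metric $e^{-2\phi}\bigl(-\tfrac{N^2}{4\tau}\md\tau^2 + h_{ab}\md x^a \md x^b\bigr)$, the norm scalar $\phi$, and the connection one-form $\b$ playing the role of a Maxwell potential. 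Here I would keep track of the split of $\Ric(g) = 0$ into evolution and constraint equations and check, via the twice-contracted Bianchi identity, that a solution of the reduced evolution system carrying the correct data at $\tau = 0$ automatically satisfies the full vacuum equations.

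The analytic heart is that this reduced system is singular at $\tau = 0$: the explicit weights $1/\tau$ and $\tau$ in \eqref{eq: Moncrief gauge} force a Fuchsian structure, schematically $(\tau\d_\tau + \mathcal A)u = \tau\, F(\tau, x, u, \d_x u, \d_x^2 u)$ with $u = (\phi, \b, h)$, for which the prescribed real analytic data $(\mathring\phi, \mathring\b, \mathring h)$ at $\tau = 0$ single out a unique real analytic solution. Rather than reprove the abstract singular Cauchy--Kowalevski theorem, I would invoke Moncrief's result after matching notation. The choice $N|_{\tau = 0} = e^{2\mathring\phi}$ is made precisely so that the a priori divergent $\md\tau^2$-coefficient $\tfrac{1}{4\tau}\bigl(e^{2\phi} - N^2 e^{-2\phi}\bigr)$ vanishes to first order at $\tau = 0$; verifying this $O(\tau)$ cancellation shows $g$ extends real analytically across $\{\tau = 0\}$.

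Finally I would read off the geometry from \eqref{eq: Moncrief gauge}. The Killing field $\d_{x_1}$ has squared length $\tfrac14\tau e^{2\phi}$, which vanishes on, and is tangent to, $\{\tau = 0\}$, so $\H := \{\tau = 0\}\times S^1_{x_1}\times\V$ is a Killing horizon with $W = \d_{x_1}$; a direct computation of $\n_W W|_\H$ then exhibits a nonzero constant surface gravity, giving non-degeneracy. I expect the main obstacle to be the bookkeeping in the reduction rather than the final geometric checks: identifying the evolution/constraint decomposition, putting the reduced Einstein equations into a genuine Fuchsian normal form with the correct indicial exponents so that Moncrief's singular Cauchy--Kowalevski theorem applies verbatim, and confirming propagation of the constraints.
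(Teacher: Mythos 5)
Your proposal is correct in outline, and it rests on the same ultimate source as the paper, but the citation boundary is drawn in a genuinely different place. The paper's entire proof is a citation: Moncrief's theorem \cite{M1982}*{Thm.\ 2} is exactly the statement of Theorem \ref{thm: Moncrief} with $\V$ replaced by $T^2$, and Moncrief establishes the version for an open region $\V \subset \R^2$ as an intermediate step of his argument (\cite{M1982}*{p.\ 90}), so none of the reduction needs to be redone. Your route instead cites only the abstract singular (Fuchsian) Cauchy--Kowalevski machinery and reconstructs the surrounding argument — the Kaluza--Klein reduction along $\d_{x_1}$, the Fuchsian normal form at $\tau = 0$, and constraint propagation via the Bianchi identity — which amounts to a sketch of Moncrief's own proof. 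What your approach buys is self-containedness and transparency: your observations that the normalization $N|_{\tau = 0} = e^{2\mathring\phi}$ makes the $\md\tau^2$-coefficient $\frac{1}{4\tau}\left(e^{2\phi} - N^2 e^{-2\phi}\right)$ extend analytically across $\{\tau = 0\}$, and that $g(\d_{x_1}, \d_{x_1}) = \frac{\tau e^{2\phi}}{4}$ makes $\{\tau = 0\}$ a null hypersurface with horizon Killing field $\d_{x_1}$ and constant nonzero surface gravity, are correct, and they are precisely the facts the paper uses implicitly (and partially recomputes in the proof of Lemma \ref{le: local existence}). What it costs is redoing Moncrief's bookkeeping — the evolution/constraint split and the indicial structure — which you rightly flag as the main burden; the paper sidesteps all of this by citing the precise intermediate statement rather than the abstract theorem behind it.
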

\begin{proof}
In case we replace $\V$ with $T^2$, the analytic version of the above is precisely the statement in Theorem \cite{M1982}*{Thm.\ 2}.
However, Moncrief proves the analytic version of the above statement of Theorem \ref{thm: Moncrief} as an intermediate step, see \cite{M1982}*{p.\ 90}.
The fact that one can construct the series expansion in the smooth case follows from Moncrief's derivation of the Einstein vacuum equations in his coordinates in \cite{M1982}*{p.\ 87-88}.
\end{proof}

We use Theorem \ref{thm: Moncrief} to prove a local version of Theorem \ref{thm: main existence formal}. 
This amounts to choosing Moncrief's data $\mathring \phi$, $\mathring \b$ and $\mathring h$ in terms of our data $\s$ and $V$.

\begin{lemma} \label{le: local existence}
Let $(\H, \sigma, V)$ be as in Theorem \ref{thm: main existence formal}.
For every $p \in \H$, there is an open subset
\[
	p \in \tilde \H \subseteq \H
\]
and a smooth spacetime $(M,g)$, a smooth Killing vector field $W$ on $M$ and a smooth non-degenerate Killing horizon
\[
	\iota: \tilde \H \hookrightarrow M,
\]
with respect to $W$, such that
\begin{itemize}
	\item $\n^k \Ric_g|_{\i(\tilde \H)} = 0$ for all $k \in \N_0$,
	\item $\left({\tilde \H}, \sigma|_{\tilde \H}, V|_{\tilde \H}\right)$ is the induced data as in Definition \ref{def: induced data}.
\end{itemize}
If in addition $\s$ is real analytic, then $\iota$ is real analytic and
\[
	\Ric_g = 0
\]
in an open neighborhood of $\iota(\tilde \H)$.
\end{lemma}
\begin{proof}
For a small enough $\e > 0$, let
\[
	\tilde \H \cong (-\e, \e)^3
\]
be a coordinate neighborhood around $p$, such that
\[
	\d_{x_1} = V.
\]
We write
\[
	\s|_{\tilde \H} = \s_{ij}\md x^i \md x^j,
\]
and conclude that $\s_{ij}$ are independent of $x_1$.
We may thus extend $\s$ to a metric on
\[
	S^1_{x_1} \times (-\e, \e)^2_{x_2, x_3},
\]
by the same formula, since all $\s_{ij}$ are independent of $x_1$.
We get an isometric embedding
\[
	\tilde \H \hookrightarrow S^1_{x_1} \times (-\e, \e)^2_{x_2, x_3},
\]
which respects the Killing vector field.
It therefore suffices to prove the assertion for the latter, so we can without loss of generality assume that
\[
	\tilde \H = S^1_{x_1} \times \V_{x_2, x_3},
\]
where 
\[
	\V_{x_2, x_3} := (-\e, \e)^2_{x_2, x_3}.
\]
Recall that we want
\[
	\s|_\U = \iota^* g|_{\tilde \H} + \o \otimes \o|_{\tilde \H},
\]
and recall from Section \ref{sec: sigma g omega} that $\s$ and $V$ determine $\iota^* g$ and $\o$ uniquely at every point on the horizon.
We therefore need to compute what $\iota^* g$ and $\o^2$ are in Moncrief's expression \eqref{eq: Moncrief gauge}.
Setting $\tau = 0$ in \eqref{eq: Moncrief gauge} and disregarding the components containing $\md \tau$ in $g$, we conclude that
\begin{equation} \label{eq: iota g}
	\iota^*g 
		= e^{-2 \mathring \phi} \mathring h_{ab}\md x^a \md x^b.
\end{equation}
In order to compute $\o$, we recall the defining equation and get
\begin{align*}
	\o(X)V|_{\tilde \H} 
		&= \n_XW|_{\tilde \H} \\
		&= \n_{\sum_{i =1}^3 X^i\d_{x_i}}\d_{x_1}|_{\tau = 0} \\
		&= \sum_{i =1}^3 X^i \Gamma_{i1}^1 V.
\end{align*}
We compute
\begin{align*}
	\Gamma_{i1}^1|_{\tau = 0} 
		&= \frac12 \sum_{\a = 0}^3 g^{1\a}\left( \d_i g_{\a1} + \d_1 g_{i\a} - \d_\a g_{i1} \right)|_{\tau = 0} \\
		&= \frac12 g^{10}\left( \d_i g_{01} - \d_0 g_{i1} \right)|_{\tau = 0},
\end{align*}
where $x_0 := \tau$.
Using that 
\[
	g_{10}|_{\tau = 0} = \frac{e^{2\mathring \phi}}4, \quad g_{1i}|_{\tau = 0} = 0,
\]
for $i = 1, 2, 3$, we note that
\[
	1 = \sum_{\a = 0}^3 g_{1 \a}g^{\a1}|_{\tau = 0} = \frac{e^{2 \mathring \phi}}4 g^{01}|_{\tau = 0},
\]
from which it follows that
\begin{align*}
	g^{01}|_{\tau = 0} 
		&= 4 e^{-2\mathring \phi}, \quad
	\d_i g_{01}|_{\tau =0}
		= \frac{e^{2\mathring \phi}}2\d_i \mathring \phi.
\end{align*}
It remains to compute, for $a = 2, 3$,
\begin{align*}
	- \d_0 g_{11} |_{t = 0}
		&= - \d_\tau \left( \frac{\tau e^{2\phi}}4 \right)|_{\tau = 0} 
		= - \frac{e^{2\mathring \phi}}4 \\
	- \d_0 g_{a1} |_{t = 0}
		&= - \d_\tau \left(\frac{\tau \b_a e^{2 \phi}}2\right)|_{\tau = 0} 
		= - \frac{\mathring \b_a e^{2 \mathring \phi}}2.
\end{align*}
Putting this together, we conclude that
\begin{equation} \label{eq: omega}
	\o = - \frac12 \md x^1 + (\d_2 \mathring \phi - \mathring \b_2 ) \md x^2 + (\d_3 \mathring \phi - \mathring \b_3) \md x^3.
\end{equation}
Recall that $\iota^*g$ and $\o$ are given by $\s$ and $V$ and we want to construct solutions to \eqref{eq: iota g} and \eqref{eq: omega}.
Note, however, that this system is underdetermined. 
Indeed, if we choose $\mathring \phi = 0$, there is a unique way of choosing $\mathring h_{ab}$ and $\b_a$ satisfying \eqref{eq: iota g} and \eqref{eq: omega}.
By Theorem \ref{thm: Moncrief}, we conclude that there is a spacetime metric $g$ of the form \eqref{eq: Moncrief gauge} and a horizon Killing vector field
\[
	W := \d_{x_1},
\]
such that
\begin{itemize}
	\item $\n^k \Ric_g|_{\i(\tilde \H)} = 0$ for all $k \in \N_0$,
	\item $\left({\tilde \H}, \sigma|_{\tilde \H}, V|_{\tilde \H}\right)$ is the induced data as in Definition \ref{def: induced data}.
\end{itemize}
If $\s$ is real analytic, then elliptic theory implies that $V$ is real analytic.
Therefore our choices above clearly imply that the data $\mathring \phi$, $\mathring \b$ and $\mathring h$ is real analytic.
Hence Theorem \ref{thm: Moncrief} implies that the spacetime metric $g$ is real analytic and we consequently have
\[
	\Ric_g = 0
\]
in an open neighborhood of $\iota (\tilde \H)$.
This completes the proof.
\end{proof}

\begin{remark} \label{rmk: function counting}
We proved in Theorem \ref{thm: main uniqueness formal} that the data $(\H, \s, V)$ characterizes the vacuum spacetime uniquely near the horizon.
As can be seen in the proof of Lemma \ref{le: local existence}, there are many choices of data in Theorem \ref{thm: Moncrief} that lead to isometric (i.e.\ the same) vacuum spacetime, i.e.\ we can without loss of generality set $\mathring \phi = 0$.
\end{remark}

We may finally prove Theorem \ref{thm: main existence formal} by applying Lemma \ref{le: local existence} to each open subset in a cover of the horizon and then gluing together using our Theorem \ref{thm: main uniqueness formal}:

\begin{proof}[Proof of Theorem \ref{thm: main existence formal}]
We are given data
\[
	(\H, \s, V),
\]
and we would like to define the smooth spacetime metric $g$ on the manifold
\[
	\R \times \H,
\]
in an open neighborhood of $\{0\} \times \H$.
Let $p \in \H$.
By Lemma \ref{le: local existence}, there is an open neighborhood $\tilde \H_p \subset \H$ of $p$ and a spacetime $M_p$ such that 
\[
	\iota_p: \tilde \H_p \to M_p
\]
is a non-degenerate Killing horizon with induced data (as in Definition \ref{def: induced data})
\[
	(\tilde \H_p, \s|_{\tilde \H_p}, V|_{\tilde \H_p}).
\]
By shrinking $\tilde \H_p$ and $M_p$ if necessary, Proposition \ref{prop: the d_t vector field} and Proposition \ref{prop: null time function} provide a diffeomorphism
\[
	M_p \cong (-\e_p, \e_p) \times \tilde \H_p
\]
for a small $\e_p > 0$, such that $\d_t$ is tangent to the curves $(-\e, \e) \times \{x\}$, with $x \in \tilde \H_p$.
Since
\[
	\H 
		= \bigcup_{p \in \H} \tilde \H_p,
\]
we can now use this to construct the spacetime metric $g$ and the Killing vector field $W$ in an open neighborhood of
\[
	\{0\} \times \H \subset \R \times \H.
\]
It remains to check that this is well-defined on the overlaps, i.e.\ if $q \in \tilde \H_q$ with $q \neq p$, then the metrics on
\[
	(-\e_p, \e_p) \times \tilde \H_p \cap (-\e_q, \e_q) \times \tilde \H_q
\]
coincide.
We know that the data $(\H, \s, V)$ coincide on $\tilde \H_p$ and $\tilde \H_q$.
It thus follows by Theorem \ref{thm: main uniqueness formal} that the resulting spacetimes are isometric in a neighborhood of any point on the horizon. 
This isometry is explicitly given by the null time function, which means that the isometry is the identity map
\[
	\id: (-\e_p, \e_p) \times \tilde \H_p \cap (-\e_q, \e_q) \times \tilde \H_q \to (-\e_p, \e_p) \times \tilde \H_p \cap (-\e_q, \e_q) \times \tilde \H_q.
\]
In other words, the metrics coincide and $g$ and $W$ are well-defined.
This concludes the proof.
\end{proof}

\begin{bibdiv}
\begin{biblist}

\bib{AIK2010}{article}{
   author={Alexakis, S.},
   author={Ionescu, A. D.},
   author={Klainerman, S.},
   title={Hawking's local rigidity theorem without analyticity},
   journal={Geom. Funct. Anal.},
   volume={20},
   date={2010},
   number={4},
   pages={845--869},
}

\bib{Besse1987}{book}{
   author={Besse, A.\ L.},
   title={Einstein manifolds},
   series={Classics in Mathematics},
   note={Reprint of the 1987 edition},
   publisher={Springer-Verlag, Berlin},
   date={2008},
   pages={xii+516},
}

\bib{BR2021}{article}{
   author={Bustamante, I.},
   author={Reiris, M.},
   title={On the existence of killing fields in smooth spacetimes with a
   compact Cauchy horizon},
   journal={Class. Quant. Grav.},
   volume={38},
   date={2021},
   number={7},
   pages={Paper No. 075010, 16},
}

\bib{CC2008}{article}{
   author={Chru\'{s}ciel, P.\ T.},
   author={Costa, J.\ L.},
   title={On uniqueness of stationary vacuum black holes},
   journal={Ast\'{e}risque},
   number={321},
   date={2008},
   pages={195--265},
}

\bib{FG1984}{article}{
   author={Fefferman, C.},
   author={Graham, C. R.},
   title={Conformal invariants},
   note={The mathematical heritage of \'{E}lie Cartan (Lyon, 1984)},
   journal={Ast\'{e}risque},
   date={1985},
   number={Num\'{e}ro Hors S\'{e}rie},
   pages={95--116},
}

\bib{FG2007}{book}{
   author={Fefferman, C.},
   author={Graham, C.\ R.},
   title={The ambient metric},
   series={Annals of Mathematics Studies},
   volume={178},
   publisher={Princeton University Press, Princeton, NJ},
   date={2012},
   pages={x+113},
}

\bib{F-B1952}{article}{
	author={Y. Four\`es-Bruhat},
	title={Th\'{e}or\`eme d'existence pour certains syst\`emes d'\'{e}quations aux d\'{e}riv\'{e}es partielles non lin\'{e}aires.} ,
	journal={Acta Math.},
	number={88}, 
	date={1952},
	pages={141--225},
}

\bib{GH1982}{article}{
   author={Geroch, R.},
   author={Hartle, J.~B.},
   title={Distorted black holes},
   journal={J.~Math.~Phys.},
   volume={23},
   date={1982},
   number={4},
   pages={680--692},
}

\bib{GM2021}{article}{
   author={Gurriaran, S.},
   author={Minguzzi, E.},
   title={Surface gravity of compact non-degenerate horizons under the dominant energy condition},
   journal={Comm.~Math.~Phys.},
   volume={395},
   date={2022},
   number={2},
   pages={679--713},
}

\bib{H1972}{article}{
   author={Hawking, S. W.},
   title={Black holes in general relativity},
   journal={Comm. Math. Phys.},
   volume={25},
   date={1972},
   pages={152--166},
}

\bib{HE1973}{book}{
   author={Hawking, S. W.},
   author={Ellis, G. F. R.},
   title={The large scale structure of space-time},
   note={Cambridge Monographs on Mathematical Physics, No. 1},
   publisher={Cambridge University Press, London-New York},
   date={1973},
   pages={xi+391},
}

\bib{HIW2007}{article}{
   author={Hollands, S.},
   author={Ishibashi, A.},
   author={Wald, R. M.},
   title={A higher dimensional stationary rotating black hole must be
   axisymmetric},
   journal={Comm. Math. Phys.},
   volume={271},
   date={2007},
   number={3},
   pages={699--722},
}

\bib{HS2022}{article}{
   author={Holzegel, G.},
   author={Shao, A.},
   title={The bulk-boundary correspondence for the Einstein equations in asymptotically Anti-de Sitter spacetimes},
   journal={Arch. Rational Mech. Anal.},
   volume={247},
   date={2023},
   number={3},
   pages={56},
}

\bib{IK2009}{article}{
   author={Ionescu, A.\ D.},
   author={Klainerman, S.},
   title={On the uniqueness of smooth, stationary black holes in vacuum},
   journal={Invent. Math.},
   volume={175},
   date={2009},
   number={1},
   pages={35--102},
}

\bib{IK2013}{article}{
   author={Ionescu, A.\ D.},
   author={Klainerman, S.},
   title={On the local extension of Killing vector-fields in Ricci flat
   manifolds},
   journal={J. Amer. Math. Soc.},
   volume={26},
   date={2013},
   number={2},
   pages={563--593},
}

\bib{L2015}{article}{
   author={Larsson, E.},
   title={Smoothness of compact horizons},
   journal={Ann. Henri Poincar\'e},
   volume={16},
   date={2015},
   number={9},
   pages={2163--2214},
}

\bib{M2015}{article}{
   author={Minguzzi, E.},
   title={Area theorem and smoothness of compact Cauchy horizons},
   journal={Comm. Math. Phys.},
   volume={339},
   date={2015},
   number={1},
   pages={57--98},
}

\bib{M1982}{article}{
   author={Moncrief, V.},
   title={Neighborhoods of Cauchy horizons in cosmological spacetimes with one Killing field},
   journal={Ann. Physics},
   volume={141},
   date={1982},
   number={1},
   pages={83--103},
}

\bib{M1984}{article}{
   author={Moncrief, V.},
   title={The space of (generalized) Taub-NUT spacetimes},
   journal={J. Geom. Phys.},
   volume={1},
   date={1984},
   number={1},
   pages={107--130},
}

\bib{MI1983}{article}{
   author={Moncrief, V.},
   author={Isenberg, J.},
   title={Symmetries of cosmological Cauchy horizons},
   journal={Comm. Math. Phys.},
   volume={89},
   date={1983},
   number={3},
   pages={387--413},
}

\bib{MI2008}{article}{
   author={Moncrief, V.},
   author={Isenberg, J.},
   title={Symmetries of higher dimensional black holes},
   journal={Class. Quant. Grav.},
   volume={25},
   date={2008},
   number={19},
   pages={195015, 37},
}

\bib{MI2018}{article}{
   author={Moncrief, V.},
   author={Isenberg, J.},
   title={Symmetries of cosmological Cauchy horizons with non-closed orbits},
   journal={Comm. Math. Phys.},
   volume={374},
   date={2020},
   number={1},
   pages={145--186},
}

\bib{P2019}{article}{
   author={Petersen, O.\ L.},
   title={Extension of Killing vector fields beyond compact Cauchy horizons},
   journal={Adv. Math.},
   volume={391},
   date={2021},
   pages={Paper No. 107953},
}

\bib{P2018}{article}{
   author={Petersen, O.\ L.},
   title={Wave equations with initial data on compact Cauchy horizons},
   journal={Anal. PDE},
   volume={14},
   date={2021},
   number={8},
   pages={2363--2408},
}

\bib{PR2018}{article}{
   author={Petersen, O.},
   author={Rácz, I.},
   title={Symmetries of vacuum spacetimes with a compact Cauchy horizon of constant non-zero surface gravity},
   journal={Ann. Henri Poincare},
   volume={24},
   date={2023},
   number={11},
   pages={3921--3943},
}

\bib{PV2021}{article}{
	author={Petersen, O.},
	author={Vasy, A.},
	title={Analyticity of quasinormal modes in the Kerr and Kerr-de Sitter spacetimes},
   journal={Commun. Math. Phys.},
   volume={402},
   date={2023},
   number={3},
   pages={2547--2575},
}

\end{biblist}
\end{bibdiv}

\end{sloppypar}
\end{document}